\newcommand{\Z}{\mathbb Z}
\newcommand{\R}{\mathbb R}
\newcommand{\C}{\mathbb C}
\newcommand{\T}{\mathbb T}
\newcommand{\F}{\mathbb F}
\newcommand{\A}{\mathbb A}
\renewcommand{\P}{\mathbb P}
\newcommand{\mc}{\mathcal}
\newcommand{\lb}{\lbrace}
\newcommand{\rb}{\rbrace}
\newcommand{\la}{\langle}
\newcommand{\ra}{\rangle}
\renewcommand{\phi}{\varphi}
\renewcommand{\mid}{\,;\,}
\DeclareMathOperator{\rk}{rk}
\DeclareMathOperator{\cut}{cut}
\DeclareMathOperator{\Ann}{Ann}
\DeclareMathOperator{\Bl}{Bl}
\DeclareMathOperator{\Aut}{Aut}
\DeclareMathOperator{\Hom}{Hom}
\DeclareMathOperator{\Ker}{ker}
\DeclareMathOperator{\Pic}{Pic}
\DeclareMathOperator{\Id}{Id}
\DeclareMathOperator{\Mat}{Mat}
\renewcommand{\leq}{\leqslant}
\renewcommand{\geq}{\geqslant}
\theoremstyle{plain}
\newtheorem{thm}{Theorem}[section]
\newtheorem{lm}[thm]{Lemma}
\newtheorem{cor}[thm]{Corollary}
\newtheorem{pr}[thm]{Proposition}
\theoremstyle{remark}
\newtheorem{rem}[thm]{Remark}
\newtheorem{ex}[thm]{Example}
\theoremstyle{definition}
\newtheorem{defn}[thm]{Definition}
\newtheorem{constr}[thm]{Construction}
\newtheorem{asm}[thm]{Assumption}
\begin{document}

\title{Cohomology rings and algebraic torus actions on hypersurfaces in the product of projective spaces and bounded flag varieties
}

\author{Grigory Solomadin}
\address[G.\,Solomadin]{Laboratory of algebraic topology and its applications, Faculty of computer science, National Research University Higher School of Economics, Russian Federation}
\email{grigory.solomadin@gmail.com}

\begin{abstract}
In this paper, for any Milnor hypersurface we find the largest dimension of effective algebraic torus actions on it. The proof of the corresponding theorem is based on the computation of the automorphism group for any Milnor hypersurface. We find all generalised Buchstaber-Ray and Ray hypersurfaces that are toric varieties. We compute the Betti numbers of these hypersurfaces and describe their integral singular cohomology rings in terms of the cohomology of the corresponding ambient varieties.
\end{abstract}

\keywords{Toric varieties: automorphisms of algebraic varieties, torus actions, blow-ups, fiber bundles, hypergraphs}
\subjclass[2020]{Primary: 14L30, 53D20; secondary: 14M25, 14J50, 94C15}
\thanks{The publication has been prepared with the support of ``RUDN University Program 5--100'' program. The reported study was funded by RFBR, project number $20-01-00675\ A$. The reported study was funded by the grant of ``Young mathematics of Russia'' foundation.}

\maketitle

\section{Introduction}

In the present paper, we study effective algebraic torus actions on the particular collections of nonsingular complex algebraic hypersurfaces, namely, $H_{i,j}$, $BR_{i,j}$ and $R_{i,j}$ in $\P^{i}\times \P^{j}$, $BF_{i}\times \P^j$ and $BF_{i}\times BF_j$, respectively, for any nonnegative integers $i,j$. Here the $n$-dimensional varieties $\P^n$ and $BF_{n}$ are a complex projective space and a bounded flag variety \cite{bu-ra-98'}, respectively.

For any integers $i, j \geq 0$ the transverse intersection $H_{i,j}$ of the Segre embedding image of $\P^i\times\P^j$ to \\
$\P^{(i+1)(j+1)-1}$ with a generic hyperplane is called a \emph{Milnor hypersurface}. In particular, $H_{i,j}$ is a hypersurface in $\P^i\times\P^j$ of bidegree $(1,1)$. The hypersurface $BR_{i,j}$ was defined as a toric variety in \cite{bu-ra-98} for any integers $0\leq i\leq j$. Following the definition of the hypersurface $R_{i,j}$ given in \cite{ra-86} for any integers $i, j \geq 0$ by Ray, we call it a \emph{Ray hypersurface}.

Recall that a normal algebraic variety $X$ over $\C$ containing an algebraic torus $\T$ as a dense open orbit is called a \emph{toric variety} if the action of $\T$ on itself extends to a regular action on $X$. The motivation for our study stems from the question raised in \cite{so-17}: is $R_{i,j}$ a toric variety? A positive answer to this question leads to the short proof of one theorem from algebraic topology, as described in \cite{so-17}.

For any integers $i, j \geq 0$ it was shown in \cite{bu-ra-98} that the variety $H_{i,j}$ is a toric variety iff $\min\lbrace i,j \rbrace\leq 1$. Demazure's result \cite{dem-70} allows to describe the automorphism group of any Milnor hypersurface that is a toric variety. We remark that the automorphism group of $H_{1,3}$ was described explicitly in \cite[Lemma 4.5]{ch-pr-sh-19}. We compute the automorphism group of $H_{i,j}$ for arbitrary integers $i,j\geq 0$. The computation is based on the well-known sheaf-theoretic argument for projective Fano varieties. We deduce the first main result of this paper from this computation.

\begin{thm}\label{thm:milnordim}
The largest dimension for algebraic torus actions on the Milnor hypersurface $H_{i,j}$ is equal to $\max{\lb i,j\rb}$ for any integers $i,j\geq 0$.
\end{thm}

We provide a natural definition of the variety $BR_{i,j}$ as a hypersurface in $BF_{i}\times \P^j$ for all integers $i,j\geq 0$. Taking into account that $BR_{i,j}$ is isomorphic to the variety from \cite{bu-ra-98} for any integers $i,j\geq 0$ such that $i\leq j$, we call the hypersurface $BR_{i,j}$ a \emph{generalised Buchstaber-Ray hypersurface}. The following two theorems represent main results of this paper, in addition to Theorem \ref{thm:milnordim}.

\begin{thm}\label{thm:brijnottoric}
The hypersurface $BR_{i,j}$ is a toric variety iff \/ $0\leq i\leq j$ or $j=0,1$.
\end{thm}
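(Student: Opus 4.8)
The plan is to prove Theorem~\ref{thm:brijnottoric} in two directions: the ``if'' direction (constructing a toric structure) and the ``only if'' direction (obstructing a torus action for $2\le i$, $2\le j$), since these require completely different techniques.

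\textbf{The easy direction.} First I would show that $BR_{i,j}$ is toric when $0\le i\le j$ or $j\in\{0,1\}$. For $j=0$ we have $\P^0$ a point, so $BR_{i,0}$ is the dualisation of $\overline{\beta_i}$ over $BF_i$, which is one of the classical toric Buchstaber--Ray varieties; for $j=1$, $\overline{\eta'}$ over $\P^1$ is a toric line bundle and the dualisation defined by the explicit equation in $BF_i\times\P^1$ can be checked to be invariant under the natural $\T^{i}\times\T^{1}$-action, giving an open dense orbit after a dimension count ($\dim BR_{i,1}=i+1$). For $0\le i\le j$ this is the original Buchstaber--Ray setting already recorded in the introduction as ``$BR_{i,j}$ are toric TTS-generators''; I would simply cite \cite{bu-ra-98} (or reproduce the explicit fan/moment-polytope coming from the projectivised toric bundle structure of Theorem~\ref{thm:fbbr}). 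The content of the ``if'' direction is thus essentially bookkeeping on the explicit defining equation.

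\textbf{The hard direction.} The substance is showing that for $2\le i\le j$ (equivalently, outside the listed range, after noting $i>j\ge 2$ is the only remaining case by the symmetry of the problem together with the $i\le j$ clause) the variety $BR_{i,j}$ admits no effective $\T^{i+j}$-action, where $i+j=\dim_{\C}BR_{i,j}$. The strategy follows the third method advertised in the introduction: suppose for contradiction that $\T^{i+j}$ acts effectively on $BR_{i,j}$. Since $BR_{i,j}$ is non-singular and projective, the hypothetical toric action is a GKM-type action with isolated fixed points, but the defining equation and Theorem~\ref{thm:1blowbr} show the natural action already present on $BR_{i,j}$ (coming from $\T^i\times\T^j$ on $BF_i\times\P^j$) has $2$-linearly dependent weights at fixed points, so one must work with the \emph{weight hypergraph} $\Gamma=\Gamma(BR_{i,j})$ rather than a GKM-graph. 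The plan is: (1) compute $\Gamma$ explicitly from the fiber-bundle description of Theorem~\ref{thm:fbbr}, listing vertices (fixed points), hyperedges, and the axial function; (2) identify the subgraph $G(\Gamma)$ on which a connection is defined and compute the monodromy of this connection around the relevant cycles; (3) observe that if a genuine $\T^{i+j}$-toric structure existed, the corresponding GKM-graph would be that of a smooth projective toric variety, whose monodromy/connection is trivial in the appropriate sense (the torus weights at each fixed point would be a basis, forcing strong linear-algebraic constraints on the axial function along cycles); (4) derive a contradiction by exhibiting a cycle in $G(\Gamma)$ whose monodromy is non-trivial, or whose holonomy forces a weight relation incompatible with any $(i+j)$-dimensional lattice realisation.

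\textbf{Main obstacle.} The crux — and the place where I expect to spend the most effort — is step (4): producing the explicit obstructing cycle and showing its monodromy cannot be ``fixed'' by any re-choice of torus. A subtlety is that $BR_{i,j}$ \emph{does} carry a lower-dimensional torus action, so the argument must be quantitative, pinning down exactly that the maximal torus has rank $\le i+j-1$ (or more precisely $<i+j$) precisely when $i,j\ge 2$, and must be compatible with the positive answer when $i\le j$ — meaning the monodromy invariant must \emph{vanish} in the range $i\le j$ and be \emph{non-zero} for $i>j\ge 2$. Getting a single invariant that flips exactly across this asymmetric boundary is delicate; I would extract it from the interaction between the two bounded-flag directions, using that $BF_i$ and $\overline{\eta'}$-twist enter asymmetrically in the defining equation, and I would cross-check the computation against the known toric cases $i\le j$ to make sure the invariant I construct genuinely detects non-toricness rather than an artifact of a poor choice of subgraph $G(\Gamma)$. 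A secondary technical point is verifying that the hypothetical $\T^{i+j}$-action, a priori unrelated to the natural one, still produces \emph{the same} weight hypergraph up to the equivalence under which monodromy is invariant — this should follow from conjugacy of maximal tori in the algebraic group $\Aut BR_{i,j}$, but that requires knowing $\Aut BR_{i,j}$ is algebraic, which I would establish via projectivity and the structure theory for automorphism groups of complete varieties.
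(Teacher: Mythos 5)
Your overall strategy for the negative direction coincides with the paper's: take the natural subtorus action on $BR_{i,j}$, build its weight hypergraph, and contradict the triviality of monodromy that a genuine toric structure would impose. But the setup contains concrete errors. First, $BR_{i,j}$ has no $i\leftrightarrow j$ symmetry: unlike $H_{i,j}$ and $R_{i,j}$ it sits in $BF_{i}\times\P^{j}$, and the asymmetry of the answer ($i\le j$ toric, $i>j\ge 2$ not) is the whole point, so you cannot ``reduce by symmetry''; the non-toric range is exactly $i>j\ge 2$, not $2\le i\le j$ as you first write. Second, $BR_{i,j}$ is a hypersurface in $BF_{i}\times\P^{j}$, so $\dim_{\C}BR_{i,j}=i+j-1$ and the torus to be excluded is $\T^{i+j-1}$, not $\T^{i+j}$. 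Third, for $j=1$ the defining equation is \emph{not} invariant under the full $\T^{i}\times\T^{1}$ (only under an $i$-dimensional subtorus), and $\dim BR_{i,1}=i$, not $i+1$; the paper instead obtains this case from the equivariant blow-up $BR_{n,1}\simeq\Bl_{BR_{n-1,0}}(BF_{n-1}\times\P^{1})$ (Proposition \ref{pr:toricbr}).

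More importantly, what you flag as the ``main obstacle'' --- producing the obstructing cycle --- is precisely the substance of the proof, and your proposal leaves it undone. The paper's resolution: for $i>j\ge 2$ and the action \eqref{eq:brij_act}, the triangle $\gamma_{k}=(E_{0,k}^{0,k+1},E_{0,k+1}^{0,k+2},E_{0,k+2}^{0,k})$ consists of \emph{definite} edges (the weights there are $2$-independent because $a_{i}(\underline{0})=0<i-j$), so any connection --- in particular the one coming from a hypothetical toric GKM-graph $\Gamma'$ with $G(\Gamma)\subseteq\Gamma'$ (Proposition \ref{pr:invsubm}) --- is forced to take the values of Proposition \ref{pr:brijconn} on $\gamma_{k}$; parallel transport then sends the external edge $E_{0,k}^{1_{k+1+(i-j)},k}$ to the \emph{different} edge $E_{0,k}^{1_{k+2+(i-j)},k}$, contradicting the fact that monodromy around a face of a toric GKM-graph fixes external edges (Proposition \ref{pr:monodromy}). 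Note also that your worry about a single invariant ``vanishing for $i\le j$'' is moot: one only needs \emph{some} obstruction in the range $i>j\ge 2$, and the cycle above simply is not available or not definite outside that range. Finally, your ``secondary technical point'' is handled not by computing $\Aut BR_{i,j}$ but by Demazure's theorem together with Proposition \ref{pr:extassum}: under the toric hypothesis $\Aut BR_{i,j}$ is automatically algebraic of rank $i+j-1$, the given $\T^{i}$ extends to a maximal torus, and Proposition \ref{pr:invsubm} relates the two hypergraphs.
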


\begin{thm}\label{thm:rijnontoric}
The hypersurface $R_{i,j}$ is a toric variety iff \/ $\min{\lb i,j\rb}=0,1$ or $i=j=2$.
\end{thm}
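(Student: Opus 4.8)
The plan is to prove both directions separately. For the ``if'' direction, I would exhibit explicit toric structures in the three listed cases. When $\min\{i,j\}=0$, say $j=0$, then $BF_0$ is a point and $R_{i,0}$ is (a dualisation of a line bundle over) $BF_i$ itself, which is a smooth projective toric variety of dimension $i$; I would check the stated equation cuts out a torus-invariant divisor and identify the result with a known toric variety (a Bott tower, or a projective bundle). When $\min\{i,j\}=1$, I would use the fiber bundle description from Theorem \ref{thm:fbr}: over $BF_1=\P^1$, the hypersurface $R_{1,j}$ fibers with toric fibers in a way that assembles into a global toric structure (a projectivisation of a split bundle over a toric base). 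The remaining sporadic case $i=j=2$ is the delicate one on the positive side: here I would produce an explicit effective $\T^4$-action on $R_{2,2}$ with a dense orbit, most naturally by writing $R_{2,2}$ via the blow-up description of Proposition \ref{pr:sijdef} and Theorem \ref{thm:1blow} as an iterated blow-up of a toric variety along torus-invariant centers, so that the torus action lifts; one then checks $\dim_{\C} R_{2,2}=4$ matches the torus rank.

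For the ``only if'' direction — the substantive content — I would argue the contrapositive: if $\min\{i,j\}\geq 2$ and $(i,j)\neq(2,2)$, then $R_{i,j}$ is not toric. The tool is the monodromy map on the weight hypergraph $\Gamma$ of a maximal effective $\T^k$-action on $R_{i,j}$, together with the auxiliary GKM-subgraph $G(\Gamma)$ from Section \ref{sec:torus_act}. The key point is that a toric variety of dimension $n$, viewed through this machinery, has trivial monodromy (its weight data comes from the fan, and the connection around every cycle in the relevant subgraph is the identity), whereas one shows that any effective torus action on $R_{i,j}$ with isolated fixed points — and $\dim_{\C} R_{i,j}=n$ forces $k\le n$ — must, in the range $\min\{i,j\}\geq 2$, $(i,j)\neq(2,2)$, carry $2$-linearly dependent weights at some fixed point producing a hyperedge whose monodromy around a suitable cycle in $G(\Gamma)$ is nontrivial. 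Since nontrivial monodromy obstructs the existence of a compatible fan, $R_{i,j}$ cannot be toric. Concretely I expect to: (i) enumerate the fixed points of the standard large-complexity action on $R_{i,j}$ coming from the ambient $BF_i\times BF_j$ and compute the tangent weights at each using the blow-up/fiber-bundle descriptions; (ii) identify the $2$-linearly dependent weight pairs and the resulting hyperedges; (iii) exhibit an explicit cycle in $G(\Gamma)$ along which the composed connection isomorphisms fail to be the identity; and (iv) invoke the general principle (proved earlier in the paper in the course of setting up the method) that such nontrivial monodromy is incompatible with toricity, hence passes to any effective torus action on $R_{i,j}$ regardless of which maximal torus one picks, using that $\Aut R_{i,j}$ is algebraic so all maximal tori are conjugate.

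The main obstacle, I expect, will be step (iii) combined with handling the case $(i,j)=(2,2)$ correctly: the monodromy obstruction must be \emph{sharp}, vanishing precisely when $i=j=2$ (where $R_{2,2}$ genuinely is toric) and when $\min\{i,j\}\le 1$, but not otherwise. This means the computation of the weight hypergraph cannot be carried out only up to crude combinatorial type; one needs the precise axial function, and the low-dimensional coincidences (the Milnor hypersurface $H_{2,2}$ inside $\P^2\times\P^2$ has special geometry, and $R_{2,2}$ inherits it) must be isolated by hand to see why the obstruction degenerates there. A secondary subtlety is ensuring the argument is genuinely action-independent: one must rule out that some clever non-obvious $\T^k$-action on $R_{i,j}$ evades the hypergraph obstruction, which is exactly where the algebraicity of $\Aut R_{i,j}$ and the conjugacy of maximal tori are essential, and establishing algebraicity of $\Aut R_{i,j}$ (or citing it, perhaps via Fano-type arguments as for $H_{i,j}$ in Theorem \ref{thm:authij}) is a prerequisite I would need to secure first.
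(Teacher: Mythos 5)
Your overall strategy for the negative direction coincides with the paper's (parallel transport in the weight hypergraph of the standard complexity-$(j-1)$ action \eqref{eq:sij_act}, played off against the rigidity of the GKM-graph of a hypothetical toric structure), but as written the plan has two gaps that would make the argument fail. First, your stated ``general principle'' --- that a toric variety has trivial monodromy, ``the connection around every cycle in the relevant subgraph is the identity'' --- is false: already for $\P^2$ the monodromy around the triangle transposes the two edges at a vertex. What is actually true (Proposition \ref{pr:monodromy}) is that for a closed edge path inside a \emph{face} $\Gamma'$ of the toric GKM-graph, the monodromy acts identically on the edges \emph{external} to $\Gamma'$; the contradiction must be engineered against that statement (or, as the paper does for $R_{i,j}$, against the face-closure properties of Lemmas \ref{lm:spanface} and \ref{lm:nochord}: one transports an edge along an \emph{open} path lying in a $3$-face $G$ and lands on an edge whose endpoint provably cannot lie in $G$). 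Second, and more seriously, you never explain why the connection you compute for the action \eqref{eq:sij_act} has anything to do with the connection of the hypothetical toric structure. The weight hypergraph of a non-GKM action admits many connections in general, so exhibiting one connection with ``nontrivial monodromy'' proves nothing by itself. The paper's mechanism for transferring information is definiteness (Definition \ref{def:definite}): along a definite edge the value of \emph{any} connection is uniquely determined by $(\Gamma,\alpha)$, so one must choose the transport path so that every edge in it is definite (this is the content of Proposition \ref{pr:rijconn}, which also records the needed $2$-independence of weights at the relevant fixed points), whence $\nabla|_{\gamma}=\nabla'|_{\gamma}$ for the toric connection $\nabla'$. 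Without this step your item (iii) does not yield a contradiction. Relatedly, the sharpness you worry about is not an issue to be ``isolated by hand'': the explicit path used in the paper simply requires $i>j\geq 2$ to exist, and the positive cases are handled by direct construction, so no degeneration analysis of the obstruction is needed.

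Two smaller points. Your worry that algebraicity of $\Aut R_{i,j}$ must be ``secured first'' inverts the logic: one argues by contradiction, assuming $R_{i,j}$ is toric, and then Demazure's theorem (Proposition \ref{pr:auttoric}) gives algebraicity and rank $i+j-1$ for free, after which Propositions \ref{pr:extassum} and \ref{pr:invsubm} embed the weight hypergraph of \eqref{eq:sij_act} into the toric GKM-graph; no independent computation of $\Aut R_{i,j}$ is required. For the positive case $i=j=2$, your proposed route via Theorem \ref{thm:1blow} as a blow-up ``along torus-invariant centers'' does not directly work: the center $R_{1,1}\subset BF_{1}\times BF_{2}$ is not invariant under the dense torus of $BF_{1}\times BF_{2}$. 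The paper instead uses the fiber bundle $R_{2,2}\to\P^1$ with fiber $R_{1,2}$ from Theorem \ref{thm:fbr} and an equivariant fibered-product comparison with $BR_{2,2}$, producing the characteristic matrix over $\cut_{I^1}I^3$ explicitly; some such detour is needed here.
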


Theorem \ref{thm:rijnontoric} provides a complete answer to the problem discussed in \cite{so-17}. In order to prove Theorem \ref{thm:brijnottoric}, for any integers $i,j$ such that $0\leq i\leq j$ or $j=0,1$, we define the algebraic torus action on $BR_{i,j}$ endowing it with the structure of a toric variety. For any integers $i,j\geq 0$ we define the effective action of the algebraic torus $(\C^{\times})^{\max\lbrace i,j\rbrace}$ on $BR_{i,j}$. This action corresponds to the $\max\lbrace i,j\rbrace$-dimensional algebraic subtorus $\T$ in the connected component $\Aut^0 BR_{i,j}$ of the automorphism group $\Aut BR_{i,j}$ of $BR_{i,j}$. Let $i,j\geq 0$ be any integers that do not satisfy the condition of Theorem \ref{thm:brijnottoric}. Let $\T'$ be any maximal algebraic torus in $\Aut^0 BR_{i,j}$ such that $\T\subseteq \T'$. All maximal algebraic tori of the algebraic group $\Aut^0 BR_{i,j}$ are conjugate to each other. We prove that $BR_{i,j}$ with $\T'$-action is not a toric variety by using a combination of methods from \cite{gu-za-01g}, \cite{ba-14} and \cite{ta-04}. These two facts together imply that $BR_{i,j}$ is not a toric variety (for these particular values of $i,j$). We prove Theorem \ref{thm:rijnontoric} by following a similar approach.

In addition, for all integers $i,j\geq 0$ we compute the Betti numbers of the hypersurfaces $BR_{i,j}$ and $R_{i,j}$, and relate their integral singular cohomology rings to the cohomology rings of $BF_{i}\times \P^j$ and $BF_{i}\times BF_j$, respectively. Namely, we prove that the morphism of the respective integral cohomology rings, induced by the embedding of any hypersurface considered above to the ambient space, is onto, and describe its kernel.

The paper is organised as follows. In Section \ref{sec:autom}, the automorphism group of any Milnor hypersurface is computed and the proof of Theorem \ref{thm:milnordim} is provided. In Section \ref{sec:def}, we define generalised Buchstaber-Ray and Ray hypersurfaces. In Section \ref{sec:torus_act}, we define a certain class of algebraic torus actions on any nonsingular complex manifold. We assign the hypergraph equipped with additional structures to any action from this class. These structures generalise the notion of an axial function and a connection from GKM-theory (see \cite{gu-za-01}) to the case of a hypergraph. In Section \ref{sec:appl}, the proofs of Theorems \ref{thm:brijnottoric} and \ref{thm:rijnontoric} are given. In Appendix \ref{sec:blowup}, we describe the generalised Buchstaber-Ray and Ray hypersurfaces in terms of consecutive blow-ups along smooth subvarieties as well as in terms of algebraic fiber bundles.  In Appendix \ref{sec:coh}, we study the integral singular cohomology rings of generalised Buchstaber-Ray and Ray hypersurfaces, and compute the respective Betti numbers by utilizing the results from Appendix \ref{sec:blowup}.

\section{The automorphism group of a Milnor hypersurface}\label{sec:autom}

Unless explicitly stated otherwise, in the sequel an \emph{algebraic variety} (or, in short, a \emph{variety}) is defined as a separated reduced irreducible scheme of finite type over $\C$. A \emph{hypersurface} in a variety is a subvariety of codimension $1$. An \emph{algebraic fiber bundle} is a locally trivial algebraic fiber bundle in the Zariski topology. A \emph{holomorphic fiber bundle} is a locally trivial complex-analytical fiber bundle over a complex manifold. We call any toric variety $X^n$ that is an algebraic fiber bundle $\pi\colon X\to B$ a \emph{toric fiber bundle}, if the base $B$ and the fiber $F$ are toric varieties and the projection $\pi$ is equivariant with respect to the given algebraic torus actions on $X$ and $B$. A \emph{fiber bundle} is a locally trivial topological fiber bundle. Occasionally, we call a fiber bundle with a particular structure (topological, holomorphic, algebraic, toric) with fiber $F$ an \emph{$F$-bundle}. We indicate the complex dimension $\dim X=n$ of an algebraic variety (or complex manifold) $X$ by writing $X^n$. We put $\dim \varnothing:=-1$.

In this paper, we repeatedly use the well-known bijective correspondence between (Cartier) divisors on a nonsingular algebraic variety $X$ and algebraic line bundles over $X$ (\cite[p.144]{ha-77}). This correspondence respects the equivalence relations of linear equivalence on divisors and of algebraic isomorphism on line bundles. Another variant of this correspondence takes place for complex manifolds and holomorphic line bundles, with appropriately defined equivalence relations in the holomorphic setting. For more details, see \cite[Chapter 1, \S1]{gr-ha-78}.

We denote by $\xi^{\vee}$ the dual vector bundle to any vector bundle $\xi$ (with a particular structure). We slightly abuse the notation and denote the pull-backs of all vector bundles $\xi\to X$, $\eta\to Y$ under the natural projections $X\times Y\to X$ and $X\times Y\to Y$ of varieties by $\xi$ and $\eta'$, respectively.

We consider the set $\Aut X$ of all automorphisms of any algebraic variety $X$ as an abstract group with the natural group operation.

\begin{defn}\label{defn:aut0}
The group $\Aut X$ is called the \emph{automorphism group} of an algebraic variety $X$. The \emph{connected component} $\Aut^0 X$ of the group $\Aut X$ is the subgroup of automorphisms that occur as a member of a family $\lbrace\phi_{b}\rbrace_{b\in B}$ such that $B$ is an irreducible rational curve, the natural map $B\times X\to X$ defined by $(b,x)\mapsto \phi_{b}(x)$ is a morphism, and $\phi_{b_0}=\Id_{X}$ is the identity for some $b_0\in B$.
\end{defn}

It follows from the Definition \ref{defn:aut0} that for any algebraic torus $\T$ acting on $X$ its image under the natural embedding to $\Aut X$ is contained in $\Aut^0 X$ \cite[Lemma 1.4, p. 1715]{ar-ba-13}.

\begin{pr}\cite[Corollary 1, p.31]{ra-64}\label{pr:autalg}
Let \/ $X^{n}$ be a nonsingular complete variety. Then $\Aut^0 X$ is an algebraic group.
\end{pr}

\begin{pr}\label{pr:auttoric}\cite{dem-70}
Let $X^{n}$ be a nonsingular projective toric variety. Then $\Aut X$ is an algebraic group of rank $n$.
\end{pr}

\begin{cor}\label{cor:extassum}
Let \/ $X^{n}$ be a nonsingular projective variety. Let $k$ be the rank of \/ $\Aut^0 X$. For any integer $r\geq 0$ and any effective action of \/ $\T^{r}:=(\C^{\times})^r$ by automorphisms on $X^{n}$ the following holds.

$(i)$ One has $r\leq k$, and there exists an extension of \/ $\T^{r}$-action on \/ $X^n$ to an effective action of \/ $\T^k$ on \/ $X^{n}$;

$(ii)$ Any two effective $\T^k$-actions by automorphisms on \/ $X^{n}$ are equivariantly isomorphic;

$(iii)$ If \/ $X^n$ is a toric variety, then the action of any maximal torus in \/ $\Aut^0 X$ on \/ $X^{n}$ endows \/ $X^n$ with the structure of a toric variety.
\end{cor}
\begin{proof}
Claims $(i)$, $(ii)$ follow from the theorem about conjugacy of all maximal algebraic tori in any algebraic group (\cite[p.119]{vi-on-90}) and Proposition \ref{pr:autalg}. Claim $(iii)$ follows from Proposition \ref{pr:auttoric} and $(ii)$.
\end{proof}

\begin{defn}\label{defn:Milnor}
For for any integers $i,j\geq 0$ the nonsingular hypersurface $H_{i,j}$ in $\P^i\times\P^j$ given by the equation
\begin{equation}\label{eq:hij_def_eq}
\sum_{k=0}^{\min{\lb i,j\rb}} z_{k}w_{k}=0.
\end{equation}
in the homogeneous coordinates $(z,w)=([z_0:\dots:z_{i}],[w_0:\dots:w_{j}])$ of $\P^i\times\P^j$ is called a \emph{Milnor hypersurface}. Denote by $\widehat{H}_{i,j}$ the hypersurface in $\P^i\times\P^j$ given by the equation
\begin{equation}\label{eq:hij_my_eq}
\sum_{k=0}^{\min{\lb i,j\rb}} z_{i-k}w_{j-k}=0.
\end{equation}
\end{defn}

The Milnor hypersurface $H_{i,j}$ is the divisor corresponding to the algebraic line bundle $\eta^{\vee}\otimes (\eta')^{\vee}$ over $\P^{i}\times \P^{j}$. Here $\eta$ denotes the tautological line bundle over a complex projective space.

\begin{rem}\label{eq:symmetryhij}
The suitable automorphism of $\P GL_{i+1}(\C)\times \P GL_{j+1}(\C)$ induces the isomorphism $\widehat{H}_{i,j}\simeq H_{i,j}$ of subvarieties in $\P^i\times \P^j$. The map $\P^{i}\times \P^{j}\to \P^{j}\times \P^{i}$, $(z,w)\mapsto (w,z)$, maps $H_{i,j}$ to $H_{j,i}$. Hence, $H_{i,j}\simeq H_{j,i}$.
\end{rem}
It is well known that $\Aut \P^n\simeq \P GL_{n+1}(\C)$ (\cite[Example 7.1.1, p.152]{ha-77}). It is easy to prove the following lemma.

\begin{lm}\label{lm:prod}
Let $i,j\geq 0$ be any integers. If $i\neq j$, then $\Aut (\P^i\times \P^j)\simeq \P GL_{i+1}(\C)\times \P GL_{j+1}(\C)$. One has\\ $\Aut (\P^i\times \P^i)\simeq\bigl(\P GL_{i+1}(\C)\times\P GL_{i+1}(\C)\bigr)\rtimes\Z_{2}$.
\end{lm}

We extend any automorphism of $H_{i,j}$ to the automorphism of $\P^i\times \P^j$ as follows.

\begin{lm}\label{lm:restrict}
There is the monomorphism of algebraic groups $\Aut H_{i,j}\to \Aut (\P^i\times \P^j)$. Its image consists of automorphisms of\/ $\P^i\times\P^j$ leaving \/ $H_{i,j}$ invariant.
\end{lm}
\begin{proof}
Recall that there is the standard exact sequence relating the ideal sheaf of the subvariety to the structure sheaf of the ambient variety. For the natural inclusion $\iota\colon H_{i,j}\to\P^i\times \P^j$, the corresponding exact sequence of sheaves on $\P^i\times \P^j$ is
\begin{equation}\label{eq:subvsheaf}
0\to \mc{O}_{\P^i\times\P^j}(-1,-1)\to\mc{O}_{\P^i\times\P^j}\to\iota_{*}\mc{O}_{H_{i,j}}\to 0.
\end{equation}
Twisting \eqref{eq:subvsheaf} by $\mc{O}_{\P^i\times\P^j}(1,1)$ one obtains the following exact sequence
\begin{equation}\label{eq:exseq}
0\to \mc{O}_{\P^i\times\P^j}\to\mc{O}_{\P^i\times\P^j}(1,1)\to\iota_{*}\mc{O}_{H_{i,j}}(1,1)\to 0,
\end{equation}
of sheaves. By \cite[Lemma 2.10, p.209]{ha-77}, one has
\begin{equation}\label{eq:restrsh}
H^{0}(\P^i\times \P^j\mid \iota_* \mc{O}_{H_{i,j}}(1,1))= H^{0}(H_{i,j}\mid \mc{O}_{H_{i,j}}(1,1)).
\end{equation}
It follows from the cohomological long exact sequence of \eqref{eq:exseq}, the identity $H^{1}(\P^i\times\P^j\mid \mc{O}_{\P^i\times\P^j})=0$ (which in turn follows from K\"{u}nneth's formula and the description of sheaf cohomology of $\P^n$) and \eqref{eq:restrsh} that
\begin{equation}\label{eq:episheaf}
H^{0}(\P^i\times\P^j\mid \mc{O}_{\P^i\times\P^j}(1,1))\to H^{0}(H_{i,j}\mid \mc{O}_{H_{i,j}}(1,1))
\end{equation}
is an epimorphism. It is not hard to show that the abelian group $H^2(H_{i,j};\Z)\simeq \Z^2$ is generated by the first Chern classes of the restrictions of the sheaves $\mc{O}_{\P^i\times\P^j}(0,1)$, $\mc{O}_{\P^i\times\P^j}(1,0)$ to $H_{i,j}$. Then one obtains $\Pic H_{i,j}\simeq \Z^2$ from the following part of the long exact sequence
\[
0= H^1(H_{i,j}\mid \Omega)\to H^1(H_{i,j}\mid \C^{\times})\to H^2(H_{i,j}\mid \Z)\to H^{2}(H_{i,j}\mid \Omega)=0,
\]
of the exponential sequence of sheaves, where $\Omega$ is the sheaf of germs of local holomorphic functions on $H_{i,j}$ (see \cite[p.127, \S 15.9]{hi-66}). The classes of $\iota^* \mc{O}_{\P^i\times\P^j}(0,1)$, $\iota^* \mc{O}_{\P^i\times\P^j}(1,0)$ span the semigroup of effective divisors in $\Pic H_{i,j}$. Any automorphism $\phi\in\Aut H_{i,j}$ maps effective divisors to effective. Hence, the abelian group isomorphism $\phi^*$ defines the bijective map on the basis of the semigroup of effective divisors to itself. We conclude that the homomorphism $\phi^*: \Pic H_{i,j}\to \Pic H_{i,j}$ restricts to the well-defined map on the set of generators of this semigroup, represented by $\mc{O}_{\P^i\times\P^j}(0,1)$ and $\mc{O}_{\P^i\times\P^j}(1,0)$. This map is either identity or involution. Hence, $\phi^*\mc{O}_{H_{i,j}}(1,1)\simeq \mc{O}_{H_{i,j}}(1,1)$, and $\phi^*$ acts on the sections of $\mc{O}_{H_{i,j}}(1,1)$. We lift the automorphism $\phi^*$ to an automorphism of $H^0(\P^i\times\P^j\mid \mc{O}_{\P^i\times\P^j}(1,1))$ by choosing any section of the epimorphism \eqref{eq:episheaf} of $\C$-modules. The projective embedding corresponding to the sheaf $ \mc{O}_{\P^i\times\P^j}(1,1)$ is the Segre embedding
\[
\P^i\times \P^j\to \P H^{0}(\P^i\times\P^j\mid \mc{O}_{\P^i\times\P^j}(1,1)).
\]
We conclude that the automorphism $\phi$ of $H_{i,j}$ is the restriction of an automorphism of $\P^i\times \P^j$ to $H_{i,j}$. It also remains to notice that $\phi(H_{i,j})=H_{i,j}$ is an algebraic condition on $\phi\in \Aut (\P^i\times\P^j)$.
\end{proof}

By Remark \ref{eq:symmetryhij}, one has $\Aut H_{i,j}\simeq \Aut H_{j,i}$. Without loss of generality, we compute the group $\Aut H_{i,j}$ for any integers $i,j\geq 0$ such that $i\leq j$. Let $Q_{0}:\C^{j+1}\times\C^{j+1}\to \C$ be the bilinear form on $\C^{j+1}$ given by the formula
\[
Q_{0}(z,w)=\sum_{k=0}^{j}z_k w_k,
\]
for any $z=(z_0,\dots,z_j), w=(w_0,\dots,w_j)\in\C^{j+1}$. Let $\pi:\C^{j+1}\to\C^{i+1}$ be the projection given by the formula $\pi(z):=(z_{0},\dots,z_{i})$. Define the bilinear form $Q:\C^{j+1}\times\C^{j+1}\to \C$ by the formula
\[
Q(z,w):=Q_{0}(\pi (z),w)=\sum_{k=0}^{i} z_{k}w_k.
\]

Let $A\in GL_{i+1}(\C)$, $B\in GL_{j+1}(\C)$. Define $\widetilde{A}:=\widetilde{A}(A)\in GL_{j+1}(\C)$ as
\[
\widetilde{A}=
\begin{pmatrix}
A & 0\\
0 & \Id_{j-i}
\end{pmatrix},
\]
where $\Id_{j-i}$ is the identity $(j-i)\times (j-i)$-matrix and the block structure is with respect to the decomposition
\begin{equation}\label{eq:dirsum}
\C^{j+1}=\C\la e_{0},\dots,e_{i}\ra\oplus\C\la e_{i+1},\dots,e_{j}\ra,
\end{equation}
in the basis $e_0,\dots, e_j$ of $\C^{j+1}$. The proof of the following lemma is straight-forward.

\begin{lm}\label{lm:cond}
Let $A\in GL_{i+1}(\C)$, $B\in GL_{j+1}(\C)$. Suppose that for any $z,w\in \C^{j+1}$ such that $Q(z,w)=0$, one has $Q(\widetilde{A}z,Bw)=0$. Then the identity
\[
B=
\begin{pmatrix}
(A^{t})^{-1} & C\\
0 & B'
\end{pmatrix},
\]
holds for some $B'=B'(B)\in GL_{j-i}(\C)$ and some $C=C(A,B)\in \Mat_{i+1,j-i}(\C)$. The class $[B']\in \P GL_{j-i}(\C)$ is uniquely defined by the class $[B]\in \P GL_{j+1}(\C)$.
\end{lm}

For all $0<i<j$, let
\begin{equation}\label{eq:eijsubg}
E_{i,j}:=\bigg\lb ([A],[B])\in \P GL_{i+1}(\C)\times \P GL_{j+1}(\C)\bigg|\ B=
\begin{pmatrix}
(A^{t})^{-1} & 0\\
0 & B'
\end{pmatrix},\ B'\in GL_{j-i}(\C)\bigg\rb,
\end{equation}
be the subgroup of $\Aut H_{i,j}$. (This is a subgroup because the identity $((A_1 A_2)^t)^{-1}=((A_1)^t)^{-1}((A_2)^t)^{-1}$ holds for any $A_1, A_2\in GL_{i+1}(\C)$. The inclusion $E_{i,j}\subseteq \Aut H_{i,j}$ easily follows from \eqref{eq:hij_def_eq}.) The following proposition is straight-forward to prove.

\begin{pr}\label{pr:ext}
The group $E_{i,j}$ is a central extension of the following groups
\[
0\to\C^{\times}\to E_{i,j}\to \P GL_{i+1}(\C)\times\P GL_{j-i}(\C)\to 0,
\]
where the right homomorphism is given by $([A],[B])\mapsto ([A],[B'])$ in terms of \eqref{eq:eijsubg}.
\end{pr}

\begin{thm}\label{thm:authij}
Let $i,j\geq 0$ be any integers such that $i\leq j$. One has $\Aut H_{0,j}\simeq \P GL_{j}(\C)$. If \/ $0< i< j$, then $\Aut H_{i,j}\simeq \C^{(i+1)(j-i)}\rtimes E_{i,j}$. For $0< i=j$ one has $\Aut H_{i,i}\simeq \P GL_{i+1}(\C)\rtimes\Z_{2}$. In particular, $\rk \Aut H_{i,j}=j$ holds for any $0\leq i\leq j$.
\end{thm}
\begin{proof}
Since $H_{0,j}\simeq \P^{j-1}$, one has $\Aut H_{0,j}\simeq \P GL_{j}(\C)$. Now let $i>0$. We apply Lemma \ref{lm:restrict}. In the case of $i=j$, the involution $(z,w)\mapsto (w,z)$ descends from $\P^i\times \P^i$ to $H_{i,i}$. Hence, by Lemma \ref{lm:prod}, in order to prove the claim of the theorem it remains to compute the subgroup of elements in $\P GL_{i+1}(\C)\times \P GL_{j+1}(\C)$ with well-defined restrictions to $H_{i,j}$. This follows easily from Lemma \ref{lm:cond}. The proof is complete.
\end{proof}

\begin{proof}[Proof of Theorem \ref{thm:milnordim}]
Follows from Theorem \ref{thm:authij} and Corollary \ref{cor:extassum}.
\end{proof}

\begin{rem}\label{rem:pglbundle}
The quotient $GL_n(\C)\to GL_n(\C)/(\C^{\times})=\P GL_n(\C)$ by the subgroup of the diagonal matrices is a principal $\C^{\times}$-bundle. Let $\eta\to \P GL_{n}(\C)$ be the algebraic line bundle associated with it. Denote by $\eta^{\times}$ the associated $\C^{\times}$-bundle over $\P GL_{n}(\C)$ corresponding to $\eta$. In particular, the total space of the algebraic fiber bundle $\eta^{\times}$ over $\P GL_{n}(\C)$ is $GL_{n}(\C)$. The fiberwise transposed algebraic line bundle $\eta^{t}\to \P GL_{n}(\C)$ is defined in the obvious way. There is the natural isomorphism of the algebraic line bundles $\eta^t, \eta$. The group $\Pic (\P GL_n(\C))$ is isomorphic to $\Z/n\Z$ (see \cite{ba-br-65}). The first Chern class $c_{1}(\eta)$ is the generator of this cyclic group. In terms of Proposition \ref{pr:ext}, the group $E_{i,j}$ as a variety is isomorphic to the total space of the $\C^{\times}$-bundle $((\eta)^{-1}\otimes\eta')^{\times}\to \P GL_{i+1}(\C)\times\P GL_{j-i}(\C)$.
\end{rem}

Let us compute $\Aut H_{1,2}$ by applying Theorem \ref{thm:authij}.

\begin{ex}\label{ex:appl}
The algebraic line bundles $\eta,\eta^{-1}$ over $\P GL_{2}(\C)$ are isomorphic, because $\Pic \P GL_{2}(\C)=\Z/2\Z$. By Remark \ref{rem:pglbundle}, the total space of the algebraic $\C^{\times}$-bundle $\eta^\times\to \P GL_{2}(\C)$ is $GL_{2}(\C)$. We conclude that the total space of the algebraic fiber bundle $(\eta^{-1})^{\times}$ over $\P GL_{2}(\C)$ is isomorphic to $GL_{2}(\C)$. By Remark \ref{rem:pglbundle} and Theorem \ref{thm:authij} we obtain the isomorphism of algebraic groups
\begin{equation}\label{eq:h12}
\Aut H_{1,2}\simeq \C^2\rtimes (\eta^{-1})^{\times}\simeq \C^2\rtimes GL_2(\C).
\end{equation}
\end{ex}

The Milnor hypersurface $H_{1,2}$ is a toric variety \cite[pp.348--350]{bu-pa-15}. Its automorphism group can be computed by Demazure's theorem (see \cite{dem-70}, \cite[\S 3.4]{oda-88}, \cite[Excercise 4.9, p. 329]{ar-15}), and the group obtained in this way agrees with \eqref{eq:h12}. We finish this Section by defining a maximal algebraic torus in $\Aut^0 H_{i,j}$. For any integer $n\geq 0$ the formula
\begin{equation}\label{eq:pn_act}
(t_{1},\dots,t_{n})\circ z=[z_0:t_1 z_1:\dots:t_n z_n],\ (t_{1},\dots,t_{n})\in\T^n,\ z=[z_0:z_1:\dots:z_n]\in\P^n,
\end{equation}
determines the $\T^n$-action on $\P^n$. Let $i,j\geq 0$ be any integers such that $i\leq j$. Then we define the effective $\T^{j}$-action on the hypersurface $H_{i,j}$ in the homogeneous coordinates $(z,w)=([z_0:z_1:\dots:z_i],[w_0:w_1:\dots:w_j])$ of $\P^i\times \P^j$ by the formula
\begin{equation}\label{eq:hij_act}
(t_{1},\dots,t_{j})\circ(z,w)=([z_0:t_1 z_1:\dots:t_i z_i],[ w_0:t_{1}^{-1}w_1:\dots:t_j^{-1} w_j]),\ (t_{1},\dots,t_{j})\in\T^j.
\end{equation}

\section{Definitions of $BR_{i,j}$ and $R_{i,j}$}\label{sec:def}

\subsection{Generalised Buchstaber-Ray hypersurface $BR_{i,j}$}\label{ssec:brij}

Let us recall some definitions.

\begin{defn}\label{defn:bfv}[\cite{bu-ra-98}]
Let $BF_0$ be the point, and let $\beta_0:=\underline{\C}\to BF_0$ be the trivial line bundle. For any integer $n\geq 0$, let $BF_{n+1}$ be the total space of the algebraic $\P^1$-bundle $\P(\beta_{n}\oplus\underline{\C})$ associated with the algebraic vector bundle $\beta_{n}\oplus\underline{\C}$ over $BF_n$. Let $\beta_{n+1}$ be the tautological line bundle over $BF_{n+1}=\P(\beta_{n}\oplus\underline{\C})$. The variety $BF_n$ is called a \emph{bounded flag variety}. We abuse the notation slightly by defining $\beta_{k}\to BF_{n}$ to be the pull-back of $\beta_{k}\to BF_{k}$ under the composition of projections $BF_{n}\to BF_{n-1}\to \dots\to BF_{k}$ of $\P^1$-bundles, where $k=0,\dots,n$.
\end{defn}

An equivalent definition of a bounded flag variety was given in \cite{bu-ra-98'} as follows. Choose a basis $e_{0},\dots,e_{n}$ in $\C^{n+1}$. Then $BF_{n}$ is the set of sequences $(l_{0},\dots,l_{n})$ of lines in $\C^{n+1}$ such that
\begin{equation}\label{eq:bfngeom}
l_{k}\subset l_{k-1}\oplus \C_{k},\ k=1,\dots,n,
\end{equation}
hold, where $\C_{k}:=\C\la e_{k}\ra$ denotes the line spanned by $e_{k}$ in $\C^{n+1}$. Put $l_{0}:=\C_{0}=\C\la e_0\ra$. The projection of the $\P^1$-bundle $BF_{n}\to BF_{n-1}$ from Definition \ref{defn:bfv} is given by $(l_{0},\dots,l_{n})\mapsto (l_{0},\dots,l_{n-1})$. Using \eqref{eq:bfngeom}, we obtain
\begin{equation}\label{eq:homcoord}
l_{k}\subset\C\la e_{0},\dots,e_{k}\ra,\ k=0,\dots,n,
\end{equation}
where $\C\la e_{0},\dots,e_{k}\ra$ denotes the linear span of vectors $e_{0},\dots,e_{k}$ in $\C^{n+1}$. Let $z_{k}:=[z_{k,0}:\dots:z_{k,k}]$ be the homogeneous coordinates of the line $l_{k}$ in \eqref{eq:homcoord}, where the coordinates $(z_{k,0},\dots,z_{k,k})$ are dual to $e_0,\dots,e_k$, for any $k=0,\dots,n$. In particular, $z_k=z_k(l_k)$, for any $k=0,\dots,n$. The embedding $BF_{n}\to \prod_{k=0}^{n}\P^k$ given by
\[
(l_{0},\dots,l_{n})\mapsto (z_0,z_{1},\dots,z_{n}),
\]
endows $BF_{n}$ with the tuple $(z_0,z_{1},\dots,z_{n})$ of homogeneous coordinates. The image of $BF_{n}$ in $\prod_{k=0}^{n}\P^k$ is given by the conditions
\begin{equation}\label{eq:bfneq}
\rk
\begin{pmatrix}
z_{k,0} & \dots & z_{k,k-1}\\
z_{k-1,0} & \dots & z_{k-1,k-1}
\end{pmatrix}
=1\mid k=2,\dots,n.
\end{equation}
These are quadratic equations (on the tuple of homogeneous coordinates $(z_0,z_{1},\dots,z_{n})$) given by vanishing of all $(2\times 2)$-minors of the matrices \eqref{eq:bfneq}.

It is well known that $BF_{n}$ is obtained from $\P^n$ by the sequence of blow-ups at strict transforms of the subvarieties $\lb z_{0}=\cdots=z_{k}=0\rb$ of $\P^n$ in any order, where $k$ runs over $\lbrace 1,\dots,n-1\rbrace$. The variety $BF_{n}$ is a nonsingular projective toric variety of dimension $n$ (see \cite{bu-ra-98'}, \cite{so-17}). The action of $\T^n=(\C^{\times})^n$ on $BF_{n}$, given by the formula
\begin{equation}\label{eq:bfnact}
(t_{1},\dots,t_{n})\circ z_{k}=[z_{k,0}:t_1 z_{k,1}:\dots:t_k z_{k,k}],\ k=1,\dots,n,\ (t_{1},\dots,t_{n})\in\T^n,
\end{equation}
has a dense open orbit.

The varieties $BR_{i,j}$ were introduced by V.M.~Buchstaber and N.~Ray in \cite{bu-ra-98} for any integers $i,j\geq 0$ such that $i\leq j$. They showed in \cite{bu-ra-98} that $BR_{i,j}$ is a nonsingular projective toric variety for any integers $i,j\geq 0$ such that $i\leq j$. We generalise their definition to the case of arbitrary integers $i, j\geq 0$, as follows.

\begin{defn}\label{def:brij}
For any integers $i, j\geq 0$ we call the hypersurface $BR_{i,j}$ in $BF_{i}\times \P^j$ given by the equation
\begin{equation}\label{eq:brijgen}
\sum_{k=0}^{\min\lb i,j\rb} z_{i,i-k}w_{j-k}=0,
\end{equation}
where $[w_{0}:\dots:w_{j}]$ are the homogeneous coordinates on the second factor $\P^j$ in $BF_{i}\times \P^j$, a \emph{generalised Buchstaber-Ray hypersurface}.
\end{defn}

\begin{rem}
Consider the hypersurface in $BF_{i}\times \P^j$ given by the equation
\begin{equation}\label{eq:singbr}
\sum_{k=0}^{\min\lb i,j\rb} z_{i,k}w_{k}=0.
\end{equation}
For any integers $i,j\geq 0$ such that $i\leq j$ the hypersurface given by \eqref{eq:singbr} is clearly isomorphic to  $BR_{i,j}$. However, unlike $BR_{2,1}$, the hypersurface given by \eqref{eq:singbr} is singular for $(i,j)=(2,1)$, see \cite{so-17}. Notice that $BR_{0,0}=\varnothing$, because substituting $0$ for $i,j$ in \eqref{eq:brijgen}, we obtain the equation $z_{0,0} w_{0}=0$ which has no solutions.
\end{rem}

Here is the definition of $BR_{i,j}$ in terms of configurations of lines in a complex vector space. Endow $\C^{\max\lb i,j\rb+1}$ with the natural Hermitian metric such that the standard basis $e_{0},\dots,e_{\max\lb i,j\rb}$ of $\C^{\max\lb i,j\rb+1}$ is orthonormal. Any point of $BF_{i}\times \P^{j}$ is the sequence $\bigl(l_0,\dots,l_i,l'\bigr)$ of lines in $\C^{\max\lb i,j\rb+1}$ satisfying the conditions
\begin{equation}\label{eq:geomdescbr}
l_{i-r}\subset l_{i-r-1}\oplus \C_{\max\lb i,j\rb-r},\ l'\subset \C\la e_{\max{\lb i,j\rb}-j},\dots,e_{j}\ra,
\end{equation}
for any integer $r=0,\dots,i-1$. Put $l_{0}:=\C_{\max\lb i,j\rb-i}$. Then $BR_{i,j}$ is given in $BF_{i}\times \P^{j}$ by the (algebraic) condition $l_{i}\perp \overline{l'}$, i.e. the lines  $l_{i}, \overline{l'}$ are orthogonal in $\C^{\max\lb i,j\rb+1}$.

\subsection{Ray hypersurface $R_{i,j}$}

We introduce the next definition by following \cite{ra-86}, \cite{so-17}.

\begin{defn}\label{def:sij}
For any integers $i, j\geq 0$, we call the hypersurface $R_{i,j}$ of $BF_{i}\times BF_{j}$ given by the equation
\begin{equation}\label{eq:rij}
\sum_{k=0}^{\min{\lb i,j\rb}}z_{i,i-k}w_{j,j-k}=0,
\end{equation}
where $(z_0,\dots,z_{i})$, $(w_{0},\dots,w_{j})$ are the tuples of homogeneous coordinates on $BF_{i}$, $BF_{j}$, respectively, a \emph{Ray hypersurface}.
\end{defn}

\begin{rem}\label{rem:sijdef}
The natural involution $BF_{i}\times BF_{j}\to BF_{j}\times BF_{i}$ maps $R_{i,j}$ to $R_{j,i}$. Hence, $R_{i,j}\simeq R_{j,i}$ for any integers $i,j\geq 0$. By definition, $R_{0,n+1}=BF_{n}$ and $R_{n,1}=BR_{n,1}$ for any integer $n\geq 0$. Notice that $R_{0,0}=\varnothing$, because substituting $0$ for $i,j$ in \eqref{eq:rij}, we get the equation $z_{0,0} w_{0,0}=0$ which has no solutions.
\end{rem}

Here is the definition of $R_{i,j}$ in terms of configurations of lines in a complex vector space. Any point in $BF_{i}\times BF_{j}$ is the sequence $\bigl(l_0,\dots,l_i,l'_0,\dots,l'_j\bigr)$ of lines in $\C^{\max\lb i,j\rb+1}$ satisfying the conditions
\begin{equation}\label{eq:geombrijcond}
l_{i-r}\subset l_{i-r-1}\oplus \C_{\max\lb i,j\rb-r}, \  l'_{j-q}\subset l'_{j-q-1}\oplus \C_{\max\lb i,j\rb-q},
\end{equation}
for any integers $r=0,\dots,i-1$ and $q=0,\dots,j-1$. Put $l_{0}:=\C_{\max\lb i,j\rb-i},\ l'_{0}:=\C_{\max\lb i,j\rb-j}$. Then $R_{i,j}\subset BF_{i}\times BF_{j}$ is given by the (algebraic) condition $l_{i}\perp \overline{l'_{j}}$.

\section{Monodromy in the weight graph of an algebraic torus action}\label{sec:torus_act}

\subsection{Definitions}\label{ssec:GKMdef}

Let us start this section by introducing the necessary notions.

\begin{defn}\label{def:diredge}[Compare with \cite{ba-14}]
Let $V$ be any finite set. Let $E_0$ be any finite collection of elements (a multiset, i.e. repetitions are allowed in $E_0$) of the set $2^{V(\Gamma)}$. Let $E:=\lb (f,v)|\ f\in E_0, v\in f\rb$. The pair $\Gamma=(V,E)$ is called an \emph{(abstract) hypergraph}. For any hypergraph $\Gamma=(V,E)$, any elements of $V(\Gamma):=V$, of $E_0$ and of $E(\Gamma):=E$ are called a \emph{vertex}, a \emph{hyperedge} and a \emph{pointed hyperedge}, respectively. Any element $f\in E_0$ such that $|f|=1$ is called a \emph{loop} of $\Gamma$. Any collection $f_1,\dots,f_k\in E_0$ is called a collection of \emph{multiple hyperedges} of $\Gamma$ if $f_1=\dots=f_k$. For any $e=(f,u)\in E(\Gamma)$, a vertex $i(e):=v$ is called an \emph{initial} vertex of a pointed hyperedge $e$. Put
\[
E_{v}(\Gamma)=\lb e\in E(\Gamma)|\ i(e)=v\rb.
\]
For any $e=(f,u)\in E(\Gamma)$ the elements $e$ and $f$ are called an \emph{oriented edge} and \emph{edge} of $\Gamma$, respectively, if $|f|=2$. If $e\in E(\Gamma)$ is an oriented edge, then the complementary vertex $t(e)$ of $e$ to $i(e)$ is called a \emph{terminal} vertex of $e$. In the following, we consider only those hypergraphs that have neither loops nor multiple hyperedges. Denote the oriented edge coming from $u$ to $v$ in $\Gamma$ by $E_{u}^{v}$ (if such an edge exists). In this case, put $\overline{e}=E_{v}^{u}$. If any hyperedge of $\Gamma$ is an edge, then $\Gamma$ is called a \emph{graph}.
\end{defn}

\begin{defn}
Let $\Gamma$ be any hypergraph. Denote by $G(\Gamma)$ the maximal \emph{subgraph} of the hypergraph $\Gamma$. Denote by $R(\Gamma)$ the subgraph of $\Gamma$ consisting of all edges in $\Gamma$ that have empty intersection with any hyperedge that is not an edge of $\Gamma$. We call $\Gamma$ an \emph{$n$-regular} hypergraph, if for any vertex $v$ of $R(\Gamma)$ one has $|E_{v}(G(\Gamma))|=n$.
\end{defn}

Clearly, $R(\Gamma)$ is a subgraph of $G(\Gamma)$. In general, this inclusion is strict.

\begin{ex}
Consider the edge graph of the tetrahedron with the set of vertices $V=\lb 1,2,3,4\rb$. Remove the edges corresponding to $\lb 1,2\rb,\lb 2,3\rb,\lb 3,1\rb$ and add the hyperedge $\lb 1,2,3\rb$ to this graph. Denote the obtained hypergraph by $\Gamma$. Clearly, $V(G(\Gamma))$ is $\lb 1,2,3,4\rb$, and the edges of $G(\Gamma)$ are $\lb 1,4\rb$, $\lb 2,4\rb$, $\lb 3,4\rb$. However, $V(R(\Gamma))=\lb 4\rb$, and $E(R(\Gamma))$ is empty.
\end{ex}

We introduce the notion of a weight hypergraph, motivated by notion of GKM-hypergraph (\cite{ba-14}) and GKM-graph (\cite{gu-za-01}), as follows. Let $\Gamma$ be any $n$-regular hypergraph. Let $\alpha\colon E(\Gamma)\to\Z^k$ be any map.

\begin{defn}[{cf. \cite{gu-za-01, ba-14}}]\label{def:conn}
We call $\alpha$ an \emph{axial function} on $\Gamma$, if the following conditions hold.

1) $\alpha(\overline{e})=-\alpha(e)$ for any edge $e\in E(G(\Gamma))$;

2) $\rk\Z\la \alpha(e)\colon e\in E_{v}(\Gamma)\ra=k$ for any $v\in V(\Gamma)$.\\
We call a pair $(\Gamma,\alpha)$ an \emph{$(n,k)$-type weight hypergraph} (or a \emph{weight hypergraph} for short, if the values of $k,n$ are clear from the context). We call the pair $(\Gamma,\alpha)$ a \emph{weight graph} if $\Gamma$ is a graph.
\end{defn}

Consider any collection $\nabla=\lb\nabla_{e}\colon e\in E(R(\Gamma))\rb$ of bijective maps $\nabla_{e}\colon E_{i(e)}(\Gamma)\to E_{t(e)}(\Gamma)$.

\begin{defn}[{cf. \cite{gu-za-01}}]
We call $\nabla$ a \emph{connection} on the weight hypergraph $(\Gamma,\alpha)$, if the following conditions hold for any $e\in E(R(\Gamma))$.

1) $\nabla_{\overline{e}}=(\nabla_{e})^{-1}$;

2) $\nabla_{e}(e)=\overline{e}$;

3) For any $e'\in E_{i(e)}(G(\Gamma))$ there exists an integer $c_{e}(e')\in\Z$ such that
\begin{equation}\label{eq:defconn}
\alpha(\nabla_{e} e')-\alpha(e')=c_{e}(e')\cdot\alpha(e).
\end{equation}
\end{defn}

\begin{rem}
A connection $\nabla$ on a weight hypergraph $(\Gamma,\alpha)$ consists of the maps $\nabla_{e}$, where $e$ exhausts the oriented edges of the graph $R(\Gamma)$. These maps act on the subsets of oriented edges of the graph $G(\Gamma)$.
\end{rem}

In order to study different connections on a given weight hypergraph, we give the following definition.

\begin{defn}\label{def:definite}
Let $(\Gamma,\alpha)$ be a weight hypergraph with a connection $\nabla$. For any edge $e$ of $E(R(\Gamma))$ we say that $(\Gamma,\alpha)$ is \emph{definite} at an edge $e$, if the affine lines $\alpha(e')+\R\la \alpha(e)\ra$ in the affine space $\mathbb{A}_{\R}^k$ are mutually different where $e'$ runs over $E_{i(e)}(\Gamma)\setminus \lb e\rb$. Otherwise, we call $(\Gamma,\alpha)$ \emph{nondefinite} at $e$. When $(\Gamma,\alpha)$ is clear from context, we call $e$ \emph{(non-)definite}, if $(\Gamma,\alpha)$ is (non-)definite at $e$, respectively. If $(\Gamma,\alpha)$ is definite at any edge of $R(\Gamma)$, then we call $(\Gamma,\alpha)$ a \emph{definite} weight hypergraph.
\end{defn}

The notion of definiteness of an edge $e$ is independent of an orientation of $e$ due to the following simple proposition.
\begin{pr}
Let $(\Gamma,\alpha)$ be a weight hypergraph with a connection $\nabla$. Let $e\in E(R(\Gamma))$ be an edge of \/ $\Gamma$. If \/ $(\Gamma,\alpha)$ is definite at $e$, then $(\Gamma,\alpha)$ is definite at $\overline{e}$, and the values of \/ $\nabla_e$ are uniquely determined by $(\Gamma,\alpha)$.
\end{pr}
\begin{proof}
Due to bijectivity of $\nabla_{e}$ and \eqref{eq:defconn}, one establishes the equality
\begin{equation}\label{eq:afflines}
\biggl\lb \alpha(e')+\R\la \alpha(e)\ra\colon e'\in E_{i(e)}(\Gamma),\ e'\neq e\biggr\rb=\biggl\lb \alpha(e'')+\R\la \alpha(\overline{e})\ra\colon e''\in E_{t(e)}(\Gamma),\ e''\neq \overline{e}\biggr\rb,
\end{equation}
of the sets of lines in the affine space $\A^k_{\R}$ by letting $e''=\nabla_{e} e'$, $e'\in E_{i(e)}(\Gamma),\ e'\neq e$. Hence, $(\Gamma,\alpha)$ is definite at $\overline{e}$. The set \eqref{eq:afflines} contains exactly $n-1$ elements because $\nabla$ is definite at $e$. One has $\nabla_{e}e'=e''$ iff the affine lines in $\A^k_{\R}$ corresponding to $e'\in E_{i(e)}(\Gamma)$ and $e''\in E_{t(e)}(\Gamma)$ by \eqref{eq:afflines} coincide. Hence, $\nabla_{e}$ is uniquely determined by $(\Gamma,\alpha)$.
\end{proof}

\begin{defn}(cf. \cite{gu-za-01}, \cite{ta-04})
A sequence $\gamma=( e_{1},\dots,e_{r})$ of edges in $G(\Gamma)$ is called an \emph{edge path}, if $t(e_j)=i(e_{j+1})$ for any $j=1,\dots,r-1$. For any edge path $\gamma=( e_{1},\dots,e_{r})$ in $G(\Gamma)$ the \emph{initial} and \emph{terminal} vertices of $\gamma$ are $i(\gamma):=i(e_1)$ and $t(\gamma):=t(e_r)$, respectively. Let $\gamma=( e_{1},\dots,e_{r})$ be any edge path in the \emph{subgraph} $R(\Gamma)$ of the hypergraph $\Gamma$. Then the \emph{parallel transport map} $\Pi_{\gamma}: E_{i(\gamma)}(\Gamma)\to E_{t(\gamma)}(\Gamma)$ of the connection $\nabla$ is defined by the formula $\Pi_{\gamma}(e):=\nabla_{e_r}\circ\dots\circ\nabla_{e_1} e$, where $e$ is any oriented edge from $E_{i(\gamma)}(\Gamma)$. If $i(\gamma)=t(\gamma)$, then $\Pi_{\gamma}$ is called the \emph{monodromy map} of $\nabla$ along $\gamma$.
\end{defn}

We generalise the notion of a face of a GKM-graph to the case of a nonregular subgraph in a weight hypergraph in the following two definitions.

\begin{defn}
Let $\Gamma'$ be a connected subgraph of $G(\Gamma)$. Let $e\in E(G(\Gamma))$ be any oriented edge satisfying $i(e)\in V(\Gamma')$. We call $e\in E(G(\Gamma))$ an \emph{internal} (\emph{external}, respectively) edge for $\Gamma'$ in $\Gamma$, if $t(e)\in V(\Gamma')$ ($t(e)\not\in V(\Gamma')$, respectively).
\end{defn}

In general, an internal edge $e\in G(\Gamma)$ for $\Gamma'$ may not belong to $E(\Gamma')$.

\begin{ex}\label{ex:p2gkm}
Consider the graph $\Gamma$ with the set of vertices $\lb 0,1,2\rb$, whose edges are $\lb 0,1\rb, \lb 1,2\rb, \lb 0,2\rb$. There exists a unique axial function $\alpha\colon E(\Gamma)\to\Z^2$ on $\Gamma$ such that $\alpha(E_0^1)=(0,-1)$, $\alpha(E_1^2)=(1,-1)$, $\alpha(E_2^0)=(0,1)$. Clearly, there exists a unique connection $\nabla$ on $(\Gamma,\alpha)$. Let $\Gamma'$ be the subgraph of $\Gamma$ with $V(\Gamma')=V(\Gamma)$, whose edges are $\lb 0,1\rb, \lb 1,2\rb$. Then the edge $E_2^0$ is internal for $\Gamma'$. However, $E_2^0\notin E(\Gamma')$.
\end{ex}

\begin{figure}
\centering
\caption{The internal edge $E_{0}^{2}$ to $\Gamma'$ does not belong to $\Gamma'$.}\label{fig:invsub}
\begin{tikzpicture}
\begin{scope}[every node/.style={circle,fill,inner sep=0pt, minimum size=6pt,node distance=6pt}]
    \node (1) at (-1,-1) [label={left:$0$ }]{};
    \node (2) at (0,1) [label={above:$1$ }]{};
    \node (3) at (1,-1) [label={right:$2$ }]{};
\end{scope}

\begin{scope}[
              every node/.style={fill=white,circle},
              every edge/.style={draw=black,very thick}]
    \path (1) edge (2);
    \path (2) edge (3);
\end{scope}

\begin{scope}[
              every node/.style={fill=white,circle},
              every edge/.style={draw=black,dashed,very thick}]
    \path (1) edge (3);
\end{scope}

\end{tikzpicture}
\end{figure}

\begin{defn}\label{def:inv}
Let $\Gamma$ be a connected $n$-regular hypergraph endowed with a connection $\nabla$. Let $\Gamma'$ be any connected subgraph of the graph $R(\Gamma)$. We call $\Gamma'$ an \emph{invariant} subgraph of $\Gamma$ with respect to $\nabla$, if the edge $\nabla_{e} e'\in E_{t(e)}(\Gamma)$ is internal for $\Gamma'$, where $e$ is any edge of $\Gamma'$ and $e'\in E_{i(e)}(\Gamma)$ is any internal edge for $\Gamma'$.
\end{defn}

Let us relate the above definitions with the notion from GKM-theory when $\Gamma$ is a graph.

\begin{defn}[\cite{gu-za-01}, \cite{bu-pa-15}]
The axial function $\alpha$ on $\Gamma$ is called \emph{$r$-independent}, if the vectors $\alpha(e_{1}),\dots,\alpha(e_{r})$ are linearly independent for any $v\in V(\Gamma)$ and any different $e_{1},\dots,e_{r}\in E_{v}(\Gamma)$. A weight graph $\Gamma$ endowed with an axial function $\alpha$ and a connection $\nabla$ is called a \emph{GKM-graph}, if $\alpha$ is $2$-independent. A connected $r$-regular \emph{subgraph} $\Gamma'$ of the GKM-graph $\Gamma$ is called an \emph{$r$-face} of $\Gamma$ (or a \emph{face}), if one has $\nabla_{e}(e')\in E(\Gamma')$ for any $v\in V(\Gamma')$ and any $e,e'\in E_{v}(\Gamma')$.
\end{defn}

It is well known that for any GKM-graph $(\Gamma,\alpha)$ with a $3$-independent axial function there exists no more than one connection $\nabla$ on it (e.g. see \cite{gu-za-01}).

\begin{rem}\label{rem:invgraphs}
Any face $\Gamma'$ of a GKM-graph $(\Gamma,\alpha)$ with a connection $\nabla$ is invariant under $\nabla$ in sense of Definition \ref{def:inv}. (We distinguish between the notion of a face of a GKM-graph \cite{gu-za-01} and its generalisation from Definition \ref{def:inv}, namely, the notion of an invariant subgraph in a weight hypergraph.) Let $(\Gamma,\alpha)$ be any weight hypergraph. Let $\Gamma'$ be any connected subgraph of $R(\Gamma)$. It is easy to prove that $\Gamma'$ is invariant under $\nabla$ iff for any edge $e$ of $\Gamma'$ and any external edge $e'\in E_{i(e)}(\Gamma)$ for $\Gamma'$ the edge $\nabla_{e} e'\in E_{t(e)}(\Gamma)$ is external for $\Gamma'$. For any edge path $\gamma$ in any invariant subgraph $\Gamma'$ of $R(\Gamma)$ if an edge $e\in E_{i(\gamma)}(\Gamma)$ is internal (external, respectively) for $\Gamma'$, then $\Pi_{\gamma}(e)$ is internal (external, respectively) for $\Gamma'$. Let us finally remark that, in general, an invariant subgraph is not regular. Following the notation of Example \ref{ex:p2gkm}, the nonregular subgraph $\Gamma'$ of $\Gamma$ is invariant for $\nabla$, because the set of external edges to $\Gamma'$ in $\Gamma$ is empty, see Fig. \ref{fig:invsub}.
\end{rem}

\subsection{Weight hypergraph of a complex $(\C^{\times})^k$-manifold}\label{ssec:geomcon}

Let $\T^{k}\simeq(\C^{\times})^{k}$ be the algebraic (i.e. noncompact) torus acting effectively by biholomorphic maps on a compact connected complex manifold $X^{n}$, where $n,k\geq 0$. Denote by $X^{\T^k}$ the set of fixed points of this action.

\begin{asm}\label{asm:torusact}
The manifold $X^n$ has an open cover by its open complex $\T^k$-invariant submanifolds $U(x)$, where $x\in X^{\T^k}$. One has $U(x)^{\T^k}=\lb x\rb$ for any $x\in X^{\T^k}$. For any $x\in X^{\T^k}$ there exists a $\T^k$-equivariant biholomorphism $\phi_{x}\colon U(x)\to \C^n$. The action of $\T^k$ on $\C^n$ here is induced by a monomorphism $\iota\colon \T^k\to\T^n$ such that $\T^n$ is a direct product of $\iota(\T^k)$ and some algebraic torus. The $\T^n$-action on $\C^n$ here is given by the formula
\begin{equation}\label{eq:stand}
(t_{1},\dots,t_{n})\circ(z_1,\dots,z_n)=(t_1 z_1,\dots,t_n z_n),\ (t_{1},\dots,t_{n})\in\T^n,\ (z_1,\dots,z_n)\in \C^n.
\end{equation}
\end{asm}

\begin{rem}
Assumption \ref{asm:torusact} implies that the set of fixed points $X^{\T^k}$ is finite and nonempty, and that the $\T^k$-stabiliser of any point $x\in X$ is a direct factor of $\T^k$, that is an algebraic subtorus.
\end{rem}

The induced representation of $\T^k$ on the tangent space $T_{x} X^{n}$ at any fixed point $x\in X^{\T^k}$ decomposes into the sum
\begin{equation}\label{eq:tangmfd}
T_{x}X^{n}=\bigoplus_{j=1}^{n}V(w_{j}),
\end{equation}
of characters corresponding to the primitive nonzero elements $w_{1},\dots, w_{n}\in\Hom(\T^k,\T^1)\simeq \Z^k$. These vectors are called the \emph{weights} of the $\T^k$-action on $X$ at the fixed point $x\in X^{\T^k}$.

For any $x\in X^{\T^k}$ and any $l\in\P(\Z^k)$ let $Y=Y(x,l)\subseteq X$ be the connected component of $X^{\Ker l}$ such that $x\in Y$ (notice that there exists a unique $Y$ for any $x,l$). The $\T^k$-action on $X$ induces the effective action of the algebraic torus $\T^k/\Ker l\simeq \C^{\times}$ on $Y$.

\begin{rem}
For any $l\in \P(\Z^k)$ such that $l$ is not represented by a weight of the $\T^k$-action at $x$, the set $X^{\ker l}=X^{\T^k}$ is finite and zero-dimensional.
\end{rem}

For any $x\in X^{\T^k}$ let $w_{j_1},\dots, w_{j_{q}}$ be all weights of the $\T^k$-action at $x$ that are $(\pm 1)$-multiples of $w$ for some $q=q(x,w)\in\Z$, that is, $w_{j_i}=\pm w$ for all $i=1,\dots,q$. For any nonzero element of $w\in\Z^k$ denote the corresponding class in $\P\Z^k$ by $[w]$.

\begin{pr}\label{pr:2skel}
Suppose that Assumption \ref{asm:torusact} holds for the $\T^k$-action on $X$. Then for any $x\in X^{\T^k}$ and any nonzero $w\in \Z^k$ the set $Y=Y(x,[w])$ has a structure of a complex $\T^k$-invariant closed submanifold of \/ $X$. One has $q=q(x,w)=\dim Y$ and
\begin{equation}\label{eq:tangspace}
T_{x} Y=\bigoplus_{r=1}^{q}V(w_{j_{r}})\subseteq T_{x}X^{n}.
\end{equation}
\end{pr}
\begin{proof}
For any $y\in X^{\T^k}$ the linear subspace $(\C^n)^{\Ker w}$ of $\C^n$ coincides with the linear subspace $\phi_{y}(Y(x,[w])\cap U(y))$ (see Assumption \ref{asm:torusact}). This implies all statements of the proposition.
\end{proof}

The following fact is well known.

\begin{pr}
Any $1$-dimensional $\T^k$-invariant complex submanifold of \/ $X$ is equivariantly biholomorphic to the standard \/ $\C^{\times}$-action on \/ $\P^1$ having weights $k,-k$ for some nonzero $k\in \Z$.
\end{pr}

We assign a weight hypergraph to any effective $\T^k$-action on any compact connected complex manifold $X^{n}$ satisfying Assumption \ref{asm:torusact}, as follows. (Compare with \cite{gu-za-01g}, \cite{ba-14}.)

\begin{constr}[Weight hypergraph of an algebraic torus action, compare with \cite{ba-14}]
Let $W\subset \P(\Z^k)$ be the (finite by compactness of $X$) set of all elements represented by a weight at some $\T^k$-fixed point of the $\T^k$-action on $X$.
Put
\[
V:=X^{\T^{k}},\ E:=\big\lb Y(x,l)^{\T^k}\big|\ x\in X^{\T^k},\ l\in W\big\rb.
\]
Here we regard $E$ as a finite multiset (due to compactness of $X$). Notice that $\Gamma:=(V,E)$ is a connected hypergraph. Denote the submanifold $Y=Y(x,l)$ of $X$ corresponding to a hyperedge $e\in E(\Gamma)$ by $Y(e)$ for any $e\in E(\Gamma)$. For any $e\in E(\Gamma)$ let $\alpha(e)$ be any weight of the corresponding $\T$-action on $Y(e)$ at the fixed point $i(e)$ (in general, $\alpha(e)$ is defined up to sign). Notice that $\alpha$ is an axial function on $\Gamma$. We call $(\Gamma, \alpha)$ the ($(n,k)$-type) weight hypergraph $(\Gamma, \alpha)$ \emph{associated with the action} of $\T^k$ on $X^n$.
\end{constr}

In the following, we consider only the class of $\T^k$-actions such that the associated hypergraphs have neither loops, nor multiple hyperedges. This implies that for any associated hypergraph $(\Gamma,E)$ the multiset $E$ is a set.

\begin{rem}
Let $e\in E(\Gamma)$ be a hyperedge of the associated weight hypergraph $(\Gamma,\alpha)$ of the $\T^k$-action on $X$. If $e$ is an edge (that is, $\dim Y(e)=1$) of $\Gamma$, then $\alpha(e)$ is uniquely defined by the $\T^k$-action on $X$. In general, $\alpha(e)$ is defined for the $\T^k$-action on $X$ only up to a sign.
\end{rem}

We define the connection on the weight hypergraph $(\Gamma,\alpha)$ associated with the $\T^k$-action on $X$ by following the construction from \cite{gu-za-01}, as follows.

\begin{constr}[Connection on a weight hypergraph of an algebraic torus action]\label{constr:conn}
Let $e\in E(R(\Gamma))$ be any edge. Consider any $\T^k$-invariant rational curve $Y$ of $X$ with different fixed points $x,y\in Y$. Let $E_{x}(\Gamma)=\lb e'_1,\dots,e'_n\rb$ and $E_{y}(\Gamma)=\lb e''_1,\dots,e''_n\rb$. Let $\alpha(e'_{j})=w'_j$, $\alpha(e''_{j})=w''_j\in\Z^k$ be the weights of the $\T^k$-action on $X$ at fixed points $x,y$, respectively, where $j=1,\dots,n$. Any complex vector bundle over $Y$ splits equivariantly into the direct sum
\[
(T X^{n})|_{Y}=\bigoplus_{j=1}^n \xi_{j},
\]
of $\T^k$-equivariant complex line bundles $\xi_{j}$ over $Y$. Hence, there exist permutations $\sigma,\tau$ of $\lbrace 1,2,\dots,n\rbrace$ such that $(\xi_{j})_{x}=V(w'_{\sigma(j)})$, $(\xi_{j})_{y}=V(w''_{\tau(j)})$. We put $\nabla_{e} e'_{j}:=e''_{\tau^{-1}\circ\sigma (j)}$ for any $j=1,\dots,n$. One can check that the collection $\nabla_{e}$, $e\in E(R(\Gamma))$ is a connection on the weight hypergraph $(\Gamma,\alpha)$.
\end{constr}

\begin{rem}
In general, a connection on a weight graph, associated with a torus action on a complex manifold, is not unique, because there is freedom in choosing the permutations $\sigma, \tau$ from Construction \ref{constr:conn}, see Example \ref{ex:depbr} below. However, if an associated weight graph is definite, then it uniquely determines a connection on it.
\end{rem}

\subsection{GKM-graph of a nonsingular projective toric variety}\label{ssec:toricconn}

Let $X^{n}$ be a nonsingular projective toric variety of dimension $n\geq 3$. The weight graph $(\Gamma, \alpha)$ and the connection $\nabla$ associated with the natural $\T^n$-action on $X^{n}$ coincide with the associated GKM-graph (with the natural connection) which is given as follows \cite{gu-za-01}. The graph $\Gamma$ is the edge graph of the simple moment polytope $P^n\subset\R^n$ of $X^{n}$, where $\R^n=\Z^n\otimes_{\Z}\R$ (see \cite{bu-pa-15}). For any edge $e$ of $\Gamma$ the vector $\alpha(e)\in\Z^n\subset\R^n$ is emanating from $i(e)$ to $t(e)$ being parallel to the corresponding edge of the polytope $P^n$. The axial function $\alpha$ is $n$-independent, because $P^n$ is a simple polytope. Hence the weight graph $(\Gamma,\alpha)$ admits a unique connection.

The faces of the graph $\Gamma$ with the connection $\nabla$ are described by the following lemma.

\begin{lm}\cite[Lemma 7.9.7, p.306]{bu-pa-15}\label{lm:spanface}
For any $v\in V(\Gamma)$, any integer $k\geq 0$ and any distinct elements $e_{1},\dots,e_k\in E_{v}(\Gamma)$ there exists a unique $k$-face $G$ of \/ $\Gamma$ containing $e_{1},\dots,e_k$. In particular, $G$ is the edge graph of a polytopal face of the moment polytope of \/ $X^n$.
\end{lm}

It is straight-forward to deduce the following lemma from convexity of faces for the moment polytope $P$.
\begin{lm}\label{lm:nochord}
Let $G\subseteq P$ be a face of the moment polytope $P$ of \/ $X^n$. If $u,v\in V(G)$ are connected by an edge $e$ of the polytope $P$, then $e\subseteq G$. In particular, for any two faces $F_1,F_2$ of the edge graph $\Gamma$ of \/ $P^n$ if \/ $V(F_1)=V(F_2)$ then $F_1=F_2$.
\end{lm}

\begin{pr}\label{pr:monodromy}
Let $\Gamma'$ be any face of the GKM-graph $\Gamma$ of \/ $X^n$. Let $\gamma$ be any edge path in $\Gamma'$. Then one has
\begin{equation}\label{eq:extedges}
\Pi_{\gamma}\bigl(E_{i(\gamma)}(\Gamma)\setminus E_{i(\gamma)}(\Gamma')\bigr)=E_{t(\gamma)}(\Gamma)\setminus E_{t(\gamma)}(\Gamma').
\end{equation}
If $i(\gamma)=t(\gamma)$, then the well-defined (by \eqref{eq:extedges}) restriction of the monodromy map $\Pi_{\gamma}$ to $E_{i(\gamma)}(\Gamma)\setminus E_{i(\gamma)}(\Gamma')$ is the identity map.
\end{pr}
\begin{proof}
By Lemma \ref{lm:spanface}, for any $e\in E(\Gamma)$ there exists a unique $(n-1)$-face $\Gamma(e)$ of $\Gamma$ such that $i(e)\in V(\Gamma(e))$ and $e\not\in E(\Gamma(e))$.
Let $e\in E_{i(\gamma)}(\Gamma)\setminus E_{i(\gamma)}(\Gamma')$. Then there exists a unique edge $e'\in E_{t(\gamma)}(\Gamma)\setminus E_{t(\gamma)}(\Gamma')$ such that $\Gamma(e)=\Gamma(e')$. We conclude that $\Pi_{\gamma}(e)=e'$, because $\Gamma(e)$ is invariant. In particular, if $i(\gamma)=t(\gamma)$, then $e'=e$. This completes the proof of the proposition.
\end{proof}

Let $\iota\colon \T^{k}\to \T^{n}$ be any monomorphism of tori. Suppose that Assumption \ref{asm:torusact} holds for the induced $\T^k$-action on the toric variety $X^{n}$. Then the weight hypergraph $(\Gamma',\alpha')$ associated with this $\T^{k}$-action on $X$ is well-defined.

\begin{rem}
Any $\T^{n}$-invariant submanifold of $X^n$ is $\T^{k}$-invariant. The opposite is false. For example, the Milnor hypersurface $H_{i,j}$ is invariant under the restriction of the action of the respective algebraic subtorus $\T^{\max{\lb i,j\rb}}$ in $\T^i\times\T^j$. However, for any integers $i,j\geq 1$ the hypersurface $H_{i,j}$ is not invariant under the natural $(\T^i\times\T^j)$-action on $\P^i \times\P^j$, see \eqref{eq:hij_act}.
\end{rem}

\begin{pr}\label{pr:invsubm}
Let $k\geq 2$. Then one has $X^{\T^{n}}= X^{\T^{k}}$, and any $\T^n$-invariant rational irreducible curve of \/ $X$ is $\T^k$-invariant. In particular, one has $E_{v}(\Gamma')=E_{v}(\Gamma)$ for any vertex $v$ of \/ $R(\Gamma')$.
\end{pr}
\begin{proof}
The inclusion $X^{\T^{n}}\subseteq X^{\T^{k}}$ holds, because any $\T^{n}$-invariant submanifold of $X^{n}$ is $\T^{k}$-invariant. To prove the first claim, it remains to note that the integers $|X^{\T^{n}}|,|X^{\T^{k}}|$ are equal to the Euler characteristic of $X^{n}$ (see \cite{gu-10}). Let $p\colon \Z^{n}\to\Z^k$ be the homomorphism of character lattices corresponding to the monomorphism $\iota$ of tori. Let $v=x\in V(R(\Gamma'))$. Any $\T^n$-invariant irreducible rational curve of $X$ has the form $Y(x,[w])$ for some weight $w\in\Z^n$ at $x\in X^{\T^n}$. Let $Y(x,[w])$ be such a curve. Clearly, $Y(x,[w])$ is $\T^k$-invariant. Hence, $Y(x,[w])\subseteq Y(x,[p(w)])$, where $Y(x,[p(w)])$ is the $\T^k$-invariant submanifold of $X$. The submanifold $Y(x,[p(w)])$ is a rational irreducible curve, because $v\in V(R(\Gamma'))$. Hence, $Y(x,[w])=Y(x,[p(w)])$. This proves the second claim of the proposition.
\end{proof}

\section{Algebraic torus actions on $BR_{i,j}$, $R_{i,j}$, and proofs of Theorems \ref{thm:brijnottoric}, \ref{thm:rijnontoric}}\label{sec:appl}

Throughout this section we refer to some auxiliary results from Appendix \ref{sec:blowup}.

\subsection{Generalised Buchstaber-Ray hypersurface $BR_{i,j}$}\label{ssec:genbrij}

Let us start by recalling the description of $\T^n$-fixed points in the bounded flag manifold $BF_n$. For any $k=0,\dots,n$ and any $\underline{u}=(u_1,\dots,u_n)\in\F_{2}^n$ put
\[
a_{k}(\underline{u}):=\max\biggl(\lbrace 0\rbrace\cup {\big\lb r\in\lb 1,\dots,k\rb\colon u_{r}=1\big\rb}\biggr).
\]
For any $k=1,\dots,n$ let $b_{k}(\underline{u})$ be a unique integer such that $\lb a_{k}(\underline{u}), b_{k}(\underline{u})\rb=\lb a_{k-1}(\underline{u}), k\rb$ holds. Let
\[
\C_{\underline{u}}:=(\C_{a_{1}(\underline{u})},\dots, \C_{a_{n}(\underline{u})})\in BF_{n},
\]
where $\C_j$ is the line spanned by $j$-th vector of the standard basis in $\C^{n+1}$, $j=0,\dots,n$ (see \S \ref{ssec:brij}). The following two lemmas are straight-forward to prove.
\begin{lm}\label{lm:tuples}
For any $\underline{u}\in\F_{2}^n$ and any integer $k=0,\dots,n$ one has the identity
\[
\lb b_{1}(\underline{u}),\dots, b_{k}(\underline{u}),a_{k}(\underline{u})\rb=\lb 0,1,\dots,k\rb.
\]
\end{lm}

\begin{lm}[\cite{bu-ra-98},\cite{bu-pa-15}]
One has $(BF_{n})^{\T^n}=\lb \C_{\underline{u}}|\ \underline{u}\in\F_{2}^n\rb$.
\end{lm}

For any $\underline{u}\in\F_2^n$ let $U_{\underline{u}}:=\lb l_{k}\neq\C_{a_{k}(\underline{u})}|\ k=1,\dots,n\rb$. Clearly, $U_{\underline{u}}=\lb z_{k,a_k(\underline{u})}\neq 0|\  k=1,\dots,n\rb$  is an affine subvariety of $BF_{n}$, where $(z_0,\dots,z_n)$ is the tuple of homogeneous coordinates on $BF_n$ (see \S \ref{ssec:brij}). Hence, $U_{\underline{u}}$ is $\T^n$-invariant with respect to the action \eqref{eq:bfnact} for any $\underline{u}\in\F_2^n$. It is easy to deduce the following lemma by the induction on $n\geq 0$ from the equations \eqref{eq:bfneq}.

\begin{lm}\label{lm:bfncoordcharts}
For any $\underline{u}\in\F_{2}^n$ the invariant affine subvariety $U_{\underline{u}}$ of the toric variety $BF_n$ is equivariantly isomorphic to \/ $\C^n$ with the $\T^n$-action \eqref{eq:stand} under the following isomorphism
\[
U_{\underline{u}}\to\C^n,\ (z_0,\dots,z_n)\mapsto \biggl(\frac{z_{1,b_1(\underline{u})}}{z_{1,a_1(\underline{u})}},\dots,\frac{z_{n,b_n(\underline{u})}}{z_{n,a_n(\underline{u})}}\biggr).
\]
\end{lm}

Recall that the projective space $\P^n$ is covered by its open subvarieties $U_k:=\lb w_{k}\neq 0\rb$, $k=0,\dots,n$, where $[w_0:\cdots:w_n]\in \P^n$. These subvarieties are invariant under the standard $\T^n$-action \eqref{eq:pn_act} on $\P^n$.  Any $(\C^{\times})^n$-invariant irreducible rational curve of $\P^n$ has the form $\P^1(k,q)=\lb \C\la \lambda e_{k}+\mu e_{q}\ra\in \P^{n}|\ [\lambda:\mu]\in \P^1\rb$, where $k,q=0,\dots,n$ are any integers such that $k\neq q$. For any vectors $\underline{u},\underline{v}\in\F_{2}^{n}$ and any $[\lambda:\mu]\in\P^1$ let
\[
\lambda\C_{\underline{u}}+\mu\C_{\underline{v}}:=\big(\C\la \lambda e_{a_{1}(\underline{u})}+\mu e_{a_{1}(\underline{v})}\ra,\dots, \C\la \lambda e_{a_{n}(\underline{u})}+\mu e_{a_{n}(\underline{v})}\ra\big)\in BF_n.
\]
Under the action \eqref{eq:bfnact} any $(\C^{\times})^n$-invariant irreducible rational curve of $BF_n$ has the form
\[
\P^1(\underline{u},q):=\big\lb \lambda\C_{\underline{u}}+\mu\C_{\underline{u}+1_{q}}|\ [\lambda:\mu]\in \P^1\big\rb,
\]
where $q=1,\dots,n$ and $\underline{u}\in\F_{2}^{n}$ are arbitrary. Here $1_{q}\in \F_{2}^{n}$ has all zero coordinates besides $q$-th coordinate that is equal to $1$. The following proposition is easily deduced from Lemma \ref{lm:bfncoordcharts}.

\begin{pr}
For any $\underline{u}\in\F_{2}^n$ the weights of the $(\C^{\times})^{n}$-action \eqref{eq:bfnact} on $BF_{n}$ at the fixed point \/ $\C_{\underline{u}}$ are $e_{b_{q}(\underline{u})}-e_{a_{q}(\underline{u})}$, where $q$ runs over $\lbrace 1,\dots,n\rbrace$.
\end{pr}

For any integers $i,j\geq 0$ such that $i\geq j$, the formula
\begin{equation}\label{eq:brij_act}
(t_{1},\dots,t_{i})\circ(z_i,w)=([z_{i,0}:t_1 z_{i,1}:\dots:t_i z_{i,i}],[t_{i-j}^{-1} w_0:t_{i-j+1}^{-1}w_1:\dots:t_i^{-1} w_j]),\ (t_{1},\dots,t_{i})\in\T^i,
\end{equation}
determines a unique effective action of the algebraic torus $(\C^{\times})^{i}$ on hypersurface $BR_{i,j}$ in the coordinates of $BF_{i}\times \P^j$. This follows easily from \eqref{eq:brijgen} together with the relations \eqref{eq:bfneq} on the tuple of homogeneous coordinates $(z_0,\dots,z_i)$ on $BF_{i}$. The hypersurface $BR_{i,j}$ is an invariant subvariety of $BF_i\times \P^{j}$ with respect to the action \eqref{eq:brij_act} of the algebraic subtorus $\T^i$ in $\T^i\times\T^j$. Hence, the fixed point set of the $\T^i$-action \eqref{eq:brij_act} on $BR_{i,j}$ is the subset of fixed points of the toric variety $BF_i\times \P^{j}$. It can easily be checked that $BR_{i,j}^{\T^i}$ consists of the points $x_{\underline{u},k}:=(\C_{\underline{u}},\C_{k})\in BR_{i,j}$ for any $\underline{u}\in \F_{2}^i$ and any $k=0,\dots,j$ such that $a_i(\underline{u})\neq k+(i-j)$ holds. It follows that the open covering of $BR_{i,j}$ by the open $\T^{i}$-invariant subvarieties $(U_{\underline{u}}\times U_k)\cap BR_{i,j}$, where $\underline{u} \in\F_{2}^i$, $k=0,\dots,j$ are any elements such that $a_i(\underline{u})\neq k+i-j$, satisfies the Assumption \ref{asm:torusact}.

Denote the combinatorial equivalence class of the standard simplex $\lb (x_1,\dots,x_n)\in \R^n\colon \sum_{j=1}^n x_j=1\mid x_j\geq 0,\ j=1,\dots,n\rb$ in $\R^n$ by $\Delta^n$. Let $I^n=(\Delta^1)^n$ be the Cartesian product of $n$ copies of $\Delta^1$.

\begin{pr}\label{pr:toricbr}
$(i)$ For any integers $i,j\geq 0$ such that $i\leq j$ the variety $BR_{i,j}$ is a projective toric variety which is an algebraic $\P^{j-1}$-bundle over \/ $BF_{i}$. Its moment polytope is combinatorially equivalent to \/ $I^i\times\Delta^{j-1}$;

$(ii)$ For any integer $n\geq 0$, the variety $BR_{n+1,0}\simeq BF_{n}$ is a projective toric variety whose moment polytope is combinatorially equivalent to $I^n$. In particular, $BR_{n+1,0}$ is a Bott tower;

$(iii)$ For any integer $n\geq 2$, the variety $BR_{n,1}$ is a projective toric variety whose moment polytope is combinatorially equivalent to the truncation of \/ $I^n$ at its face $I^{n-2}$ (see \cite{bu-pa-15}).
\end{pr}
\begin{proof}
For the proof of $(i)$ see \cite{bu-ra-98} or \cite[p.350]{bu-pa-15}. The claim $(ii)$ follows from the Definition \ref{def:brij}. By Theorem \ref{thm:1blowbr}, the variety $BR_{n,1}$ is the blow-up of $BF_{n-1}\times \P^1$ along the zero locus $\lb z_{n-1,n-1}=w_{1}=0\rb$, which is invariant under the action \eqref{eq:brij_act} and is isomorphic to $BF_{n-2}$. Hence, the blow-up $BR_{n,1}\to BF_{n-1}\times \P^1$ is $\T^n$-equivariant. In particular, $BR_{n,1}$ is a projective toric variety and the respective moment polytope is obtained by the truncation indicated above.
\end{proof}

Notice that the fan of any projective nonsingular toric variety is the normal fan of the respective moment polytope.

\begin{rem}\label{rem:br21}
By Proposition \ref{pr:brijdef} $(iii)$, the blow-up $BR_{2,1}\to \widehat{H}_{2,1}$ is $\T^2$-equivariant, where $\widehat{H}_{2,1}=\P(\mc{O}(-1)\oplus\underline{\C})\to \P^1$ is a toric surface. By Theorem \ref{thm:1blowbr}, the blow-up $BR_{2,1}\to \P^1\times\P^1$ is also $\T^2$-equivariant. The two $\T^2$-actions on $BR_{2,1}$ obtained in this way coincide. Let $\Sigma$ be the fan in $\R^2$ corresponding to the toric variety $BR_{2,1}$. It is easy to show that the generators of the one-dimensional cones from $\Sigma$ are the columns of the following matrix
\[
\begin{pmatrix}
1 & -1 & 0 & 0 & -1\\
0 & 0 & 1 & -1 & 1\\
\end{pmatrix}.
\]
\end{rem}

For any integer $q=1,\dots,i$ and any $\underline{u}\in\F_{2}^i$ denote by $b(q)=b(\underline{u},q)$ the vector $e_{b_{q}(\underline{u})}-e_{a_{q}(\underline{u})}\in\Z^i$. For any integers $k,r=0,\dots,j$ and any $\underline{u}\in\F_{2}^j$ denote by $b'(r)=b'(k,r)$ the vector $e_{k+i-j}-e_{r+i-j}\in\Z^i$. It is easy to prove the following two propositions.

\begin{pr}\label{pr:weightbr}
Let $i,j\geq 0$ be any integers such that $i\geq j$. Then for any $\underline{u}\in\F_{2}^i$ and any $k=0,\dots,j$ such that $a_i(\underline{u})\neq k+(i-j)$ the weights of the $\T^{i}$-action \eqref{eq:brij_act} on $BR_{i,j}$ at the fixed point $x_{\underline{u},k}$ are the elements of the multiset
\begin{equation}\label{eq:brijweights}
\big\lb b(\underline{u},q)\big|\ q=1,\dots,i\big\rb\cup\big\lb b'(k,r)\big|\ r=0,\dots,j,\ r\neq k\big\rb\setminus\big\lb e_{k+(i-j)}-e_{a_{i}(\underline{u})}\big\rb.
\end{equation}
\end{pr}

\begin{rem}
If $a_{i}(\underline{u})<k+(i-j)$, then $b(k+(i-j))= e_{k+(i-j)}-e_{a_{i}(\underline{u})}$. If $a_{i}(\underline{u})>k+(i-j)$, then $b'(r)= e_{k+(i-j)}-e_{a_{i}(\underline{u})}$, where $r=a_{i}(\underline{u})-(i-j)$. This justifies the exclusion in \eqref{eq:brijweights}.
\end{rem}

\begin{pr}\label{pr:weightsbrij}
Let $\underline{u}\in\F_{2}^i$, $k=0,\dots,j$ be any elements such that $a_i(\underline{u})\neq k+(i-j)$. Then the multiset of collections of pairwise proportional weights of the $\T^{i}$-action \eqref{eq:brij_act} on $BR_{i,j}$ at $x_{\underline{u},k}$ consists of the multiset of the (unordered) pairs $b(q), b'(r)$ of weights, where $q=1,\dots,i$ and $r=0,\dots,j$ are any integers satisfying the following conditions
\[
\lb a_{q}(\underline{u}), b_{q}(\underline{u})\rb=\lb k+(i-j), r+(i-j)\rb\neq \lb k+(i-j),a_{i}(\underline{u})\rb.
\]
The $\T^i$-invariant subvariety $Y=Y(x_{\underline{u},k},[b(q)])$ of \/ $BR_{i,j}$ corresponding to the weight $b(q)\in\Z^i$ (see Section \ref{sec:torus_act}) is \/ $\P^1(\underline{u},q)\times \P^1(k+(i-j),r+(i-j))\subseteq BF_{i}\times \P^{j}$. One has $Y^{\T^i}=\lb x_{\underline{u},k},\ x_{\underline{u}+1_{q},k},\ x_{\underline{u},r},\ x_{\underline{u}+1_{q},r}\rb$.
\end{pr}

The following example shows that the $4$-dimensional variety $BR_{3,2}$ has a fixed point of the $\T^3$-action \eqref{eq:brij_act} whose weights are linearly dependent.

\begin{ex}\label{ex:depbr}
The weights of the $\T^3$-action \eqref{eq:brij_act} on $BR_{3,2}$ at the fixed points $x_{111,0}$, $x_{111,1}$, $x_{101,0}$, $x_{101,1}$ are the respective collections of vectors in $\Z^3$ given as follows.
\begin{itemize}
\item $(1,-1,0)$, $(-1,0,0)$, $(1,-1,0)$, $(0,1,-1)$;
\item $(-1,0,0)$, $(1,-1,0)$, $(0,1,-1)$, $(-1,1,0)$;
\item $(1,-1,0)$, $(-1,0,0)$, $(-1,1,0)$, $(1,0,-1)$;
\item $(-1,0,0)$, $(-1,1,0)$, $(1,0,-1)$, $(-1,1,0)$.
\end{itemize}
\end{ex}

For any integers $i,j\geq 0$ such that $i>j$, let $(\Gamma,\alpha)=(\Gamma(BR_{i,j}),\alpha(BR_{i,j}))$ be the weight hypergraph associated with the $\T^i$-action \eqref{eq:brij_act} on $BR_{i,j}$ (notice that the Assumption \ref{asm:torusact} is satisfied for such an action).

\begin{pr}\label{pr:2sphbrij}
Let $i,j\geq 0$ be any integers such that $i>j$. Let $k=0,\dots,j$ and $\underline{u}\in\F_2^{i}$ be arbitrary. Then:

$(i)$ For any integer $p=0,\dots, j$ satisfying $p\neq a_i(\underline{u})-(i-j),k$, the hypergraph $\Gamma$ has a pointed hyperedge $E$ such that $x_{\underline{u},k},x_{\underline{u},p}\in E$ and $\alpha(E)=\pm(e_{k+(i-j)}-e_{p+(i-j)})$;

$(ii)$ For any integer $q=1,\dots, i$ satisfying $a_{i}(\underline{u}+1_{q})\neq k+(i-j)$, the hypergraph $\Gamma$ has a pointed hyperedge $E$ such that $x_{\underline{u},k},x_{\underline{u}+1_{q},k}\in E$ and $\alpha(E)=\pm(e_{b_{q}(\underline{u})}-e_{a_{q}(\underline{u})})$;

$(iii)$ If there exist integers $r=1,\dots,i$ and $s=0,\dots,j$ satisfying $a_{i}(\underline{u}+1_{r})=k+(i-j)$ and $a_{i}(\underline{u})=s+(i-j)$, then the hypergraph $\Gamma$ has a  pointed hyperedge $E$ such that $x_{\underline{u},k},x_{\underline{u}+1_{r},s}\in E$ and $\alpha(E)=\pm(e_{k+(i-j)}-e_{a_{i}(\underline{u})})$.
\end{pr}
\begin{proof}
It is not hard to prove that any of the following irreducible rational curves
\[
\biggl\lb\big(\C_{\underline{u}},\C\la\lambda\cdot e_{k}+\mu\cdot e_p\ra\big)\biggl|\ [\lambda:\mu]\in \P^1\biggr\rb,\ w=\pm(e_{k+(i-j)}-e_{p+(i-j)}),
\]
\[
\biggl\lb\big(\lambda\cdot \C_{\underline{u}}+\mu\cdot \C_{\underline{u}+1_{q}},\C\la e_k\ra\big)\biggl|\ [\lambda:\mu]\in \P^1\biggr\rb,\ w=\pm(e_{b_{q}(\underline{u})}-e_{a_{q}(\underline{u})}),
\]
\[
\biggl\lb\big(\lambda\cdot \C_{\underline{u}}+\mu\cdot \C_{\underline{u}+1_{r}},\C\la\mu\cdot e_{s+(i-j)}-\lambda\cdot e_{k+(i-j)}\ra\big)\biggl|\ [\lambda:\mu]\in \P^1\biggr\rb,
w=\pm(e_{k+(i-j)}-e_{a_{i}(\underline{u})}),
\]
of $BR_{i,j}$ is invariant under the induced effective action of the one-dimensional algebraic torus $\C^i/\Ker w$ from the $\T^i$-action \eqref{eq:brij_act} on $BR_{i,j}$. For any of these curves the corresponding weight $w\in\Z^i$ given above is determined up to multiplication by $-1$. This completes the proof.
\end{proof}

One can obtain the hypergraph $\Gamma(BR_{i,j})$ from Propositions \ref{pr:weightsbrij} and \ref{pr:2sphbrij}. The axial function $\alpha(BR_{i,j})$ can be computed from Propositions \ref{pr:weightbr} and \ref{pr:2sphbrij}. Let $\nabla$ be a connection on $(\Gamma(BR_{i,j}),\alpha(BR_{i,j}))$ associated with the action \eqref{eq:brij_act}. We compute the values of $\nabla$ that are necessary for the proof of Theorem \ref{thm:brijnottoric} in the following proposition.

\begin{pr}\label{pr:brijconn}
Let $i,j$ be any integers such that $0\leq j<i$. Let $\underline{u}\in\F^{i}_{2}$ be any vector such that $a_{i}(\underline{u})<i-j$ holds. Then for any integers $k,r=0,\dots,j$ satisfying $k\neq r$, the hypergraph $\Gamma$ has the definite oriented edge \/ $E=E_{\underline{u},k}^{\underline{u},r}$. The connection $\nabla$ is well defined at \/ $E\in E(R(\Gamma))$, and one has the following identities
\[
\nabla_{E} E_{\underline{u},k}^{\underline{u},a}=E_{\underline{u},r}^{\underline{u},a},\
\nabla_{E} E_{\underline{u},k}^{\underline{u}+1_{q},k}=E_{\underline{u},r}^{\underline{u}+\underline{1}_{q},r},\
\nabla_{E} E_{\underline{u},k}^{\underline{u},r}=E_{\underline{u},r}^{\underline{u},k},\
\nabla_{E} E_{\underline{u},k}^{\underline{u}+1_{r+(i-j)},k}=E_{\underline{u},r}^{\underline{u}+1_{k+(i-j)},r},
\]
where $a=0,\dots,j$ and $q=1,\dots,i$ are any integers such that $a\neq k,r$ and $q\neq k+(i-j),r+(i-j)$.
\end{pr}
\begin{proof}
By Proposition \ref{pr:weightsbrij}, the collection of weights at $x_{\underline{u},k}$, as well as at $x_{\underline{u},r}$, is $2$-independent, because $a_{i}(\underline{u})<i-j$. Hence, by Proposition \ref{pr:2sphbrij} there exists the edge $E=E_{\underline{u},k}^{\underline{u},r}$ in the hypergraph $\Gamma$. By Proposition \ref{pr:weightbr} this edge is definite and belongs to the graph $R(\Gamma)$. In order to prove the identities from the claim of the proposition we compute the congruences modulo $\alpha(E)=e_{k+(i-j)}-e_{r+(i-j)}$ between the weights (in particular, vectors in $\Z^i$) in Fig. \ref{fig:weightsbrij}. During the computation we use the identity $b_{q+(i-j)}(\underline{u})=q+(i-j)$ for any integer $q=0,\dots,j$ which holds, because $a_i(\underline{u})<i-j$.
\end{proof}

\begin{figure}
\centering
\caption{Congruences of weights for $\Gamma(BR_{i,j})$. The values of $\alpha$ on the edges from the columns $1,4$ are given in the columns $2,3$ rowwise, respectively. The conditions for the integers $a,q$ are given in the column $5$.}\label{fig:weightsbrij}
{\renewcommand{\arraystretch}{2}
\begin{tabular}{ |l|l|l|l|l| }
  \hline
  \multicolumn{5}{|c|}{$E_{\underline{u},k}^{\underline{u},r}$, $\mod e_{k+(i-j)}-e_{r+(i-j)}$} \\
  \hline
  $E_{\underline{u},k}^{\underline{u},a}$ & $e_{k+(i-j)}-e_{a+(i-j)}$ & $ e_{r+(i-j)}-e_{a+(i-j)}$ & $E_{\underline{u},r}^{\underline{u},a}$ & $a\neq k,r$\\
  \hline
  $E_{\underline{u},k}^{\underline{u}+1_{q},k}$ & $e_{b_{q}(\underline{u})}-e_{a_{q}(\underline{u})}$ & $e_{b_{q}(\underline{u})}-e_{a_{q}(\underline{u})}$ & $E_{\underline{u},r}^{\underline{u}+1_{q},r}$ & $q\neq k+(i-j),r+(i-j)$\\
  \hline
  $E_{\underline{u},k}^{\underline{u},r}$ & $e_{k+(i-j)}-e_{r+(i-j)}$ & $e_{r+(i-j)}-e_{k+(i-j)}$ & $E_{\underline{u},r}^{\underline{u},k}$ & \\
  \hline
  $E_{\underline{u},k}^{\underline{u}+1_{r+(i-j)},k}$ & $e_{r+(i-j)}-e_{a_{i}(\underline{u})}$ & $e_{k+(i-j)}-e_{a_{i}(\underline{u})}$ & $E_{\underline{u},r}^{\underline{u}+1_{k+(i-j)},r}$ & \\
  \hline
\end{tabular}}
\end{figure}

\begin{proof}[Proof of Theorem \ref{thm:brijnottoric}]
For any integers $i,j\geq 0$ which do not satisfy $i>j\geq 2$, the claim of the theorem follows from Proposition \ref{pr:toricbr}. Let $i,j\geq 0$ be any integers such that $i>j\geq 2$. Suppose that $BR_{i,j}$ is a toric variety. The idea of the following argument is to find an invariant $2$-face in $\Gamma$ with a nontrivial action of the monodromy map along it on the external edges. By Proposition \ref{pr:brijconn}, for any integer $k=0,\dots,j-2$ the vertex $x_{\underline{0},k}$ of $\Gamma$ belongs to $V(R(\Gamma))$. Hence, the edge path $\gamma_{k}:=(E_{0,k}^{0,k+1},E_{0,k+1}^{0,k+2},E_{0,k+2}^{0,k})$ belongs to $R(\Gamma)$ for any integer $k=0,\dots,j-2$. This implies that the monodromy map $\Pi_{\gamma_k}$ is well defined for any $k=0,\dots,j-2$. By Proposition \ref{pr:brijconn}, the subgraph $\gamma_{k}$ is a $2$-face of $\Gamma$ for any integer $k=0,\dots,j-2$. By Proposition \ref{pr:brijconn}, we compute $\Pi_{\gamma_k} E_{0,k}^{1_{k+1+(i-j)},k}$ with respect to the connection $\nabla$ as follows.
\[
E_{0,k}^{1_{k+1+(i-j)},k}\mapsto E_{0,k+1}^{1_{k+(i-j)},k+1} \mapsto E_{0,k+2}^{1_{k+(i-j)},k+2} \mapsto E_{0,k}^{1_{k+2+(i-j)},k}.
\]
Hence,
\begin{equation}\label{eq:mon1}
\Pi_{\gamma_{k}} E_{0,k}^{1_{k+1+(i-j)},k}=E_{0,k}^{1_{k+2+(i-j)},k},\ k=0,\dots,j-2.
\end{equation}
It follows from the assumption and Corollary \ref{cor:extassum} that there exists the extension of the $\T^i$-action \eqref{eq:brij_act} on $BR_{i,j}$ to the toric action with the GKM-graph $(\Gamma', \alpha')$ with the connection $\nabla'$. By Proposition \ref{pr:invsubm}, $\gamma_k$ is the subgraph of $\Gamma'$ for any $k=0,\dots,j-2$. Since the edges of $\gamma_{k}$ are definite in $(\Gamma,\alpha)$, one has $\nabla|_{\gamma_{k}}=\nabla'|_{\gamma_{k}}$. In particular, \eqref{eq:mon1} holds with respect to $\nabla'$. However, this contradicts Proposition \ref{pr:monodromy}. The proof is complete.
\end{proof}

\subsection{Ray hypersurface $R_{i,j}$}

In this paragraph we use the notation introduced in \S \ref{ssec:genbrij}. For any integers $i,j$ such that $0\leq j\leq i$, the formula
\begin{equation}\label{eq:sij_act}
(t_{1},\dots,t_{i})\circ(z_i,w_j)=([z_{i,0}:t_1 z_{i,1}:\dots:t_i z_{i,i}],[t_{i-j}^{-1} w_{j,0}:t_{i-j+1}^{-1}w_{j,1}:\dots:t_i^{-1} w_{j,j}]),\ (t_{1},\dots,t_{i})\in\T^i,
\end{equation}
determines a unique effective action of the algebraic torus $\T^i$ on the hypersurface $R_{i,j}$. This follows from the relations \eqref{eq:bfneq} on the homogeneous coordinates on $BF_{i},BF_j$. The hypersurface $R_{i,j}$ is an invariant subvariety of $BF_i\times BF_{j}$ with respect to the action \eqref{eq:sij_act} of the algebraic subtorus $\T^i$ in $\T^i\times\T^j$. Hence, the fixed point set of the $\T^i$-action \eqref{eq:sij_act} on $R_{i,j}$ is the subset of $(BF_i\times BF_{j})^{\T^i\times\T^j}$. It can easily be checked that $R_{i,j}^{\T^i}$ consists of the points $x_{\underline{u},\underline{v}}:=(\C_{\underline{u}},\C_{\underline{v}})\in R_{i,j}$ for any $\underline{u}\in \F_{2}^{i}$ and any $\underline{v}\in \F_{2}^{j}$ such that $a_i(\underline{u})\neq a_j(\underline{v})+(i-j)$. It follows that the open covering of $R_{i,j}$ by the open $\T^i$-invariant subvarieties $(U_{\underline{u}}\times U_{\underline{v}})\cap R_{i,j}$, where $\underline{u}\in\F^i$, $\underline{v}\in \F^j$ are any elements such that $a_i(\underline{u})\neq a_j(\underline{v})+(i-j)$, satisfies the Assumption \ref{asm:torusact}.

\begin{cor}\label{cor:toricr}
Let $n\geq 0$ be any integer.

$(i)$ The variety $R_{0,n+1}$ is a projective toric variety whose moment polytope is combinatorially equivalent to \/ $I^n$. In particular, $R_{0,n+1}$ is a Bott tower;

$(ii)$ The variety $R_{1,n}$ is a projective toric variety whose moment polytope is combinatorially equivalent to the truncation $\cut_{I^{n-2}} I^n$ of \/ $I^n$ at its face \/ $I^{n-2}$;

$(iii)$ The variety $R_{2,2}$ is a projective toric variety whose moment polytope is combinatorially equivalent to the truncation $\cut_{I^{1}} I^3$ of \/ $I^3$ at its edge.
\end{cor}
\begin{proof}
Parts $(i)$ and $(ii)$ follow from Proposition \ref{pr:toricbr}, because $R_{0,n+1}=BR_{0,n+1}$, $R_{1,n}=BR_{1,n}$. Now we prove part $(iii)$. By Theorem \ref{thm:fbr} $(ii)$, there is the algebraic $R_{1,2}$-bundle $R_{2,2}\to \P^1$. This algebraic fiber bundle is represented as the fibered product $R_{2,2}=E\times_{\T^{2}} R_{1,2}\to \P^1$ for some principal algebraic $\T^2$-bundle $E$ over $\P^1$. The equivariant blow-up $R_{1,2}\to BR_{1,2}$ from Remark \ref{rem:br21}, where we identify $R_{1,2}\simeq BR_{2,1}$, induces the $\T^3$-equivariant morphism
\[
\begin{tikzcd}[column sep=5]
\tikzset{
  >/.tip={Stealth[length=3pt, width=4pt, inset=1.8pt]}
}
R_{2,2} \arrow[equal]{r} & E\times_{\T^2} R_{1,2}\arrow{rr}{}\arrow{dr}{} && E\times_{\T^2} BR_{1,2} \arrow{dl}{}\arrow[equal]{r} & BR_{2,2}\\
&& \P^1 &&
\end{tikzcd}
\]
by acting on the fibers. The fan of the toric $\P^1$-bundle $BR_{2,2}\to BF_{2}$ is the normal fan of the polytope in $\R^3$ combinatorially equivalent to the cube $I^3$. The columns of the following matrix
\[
\begin{pmatrix}
1 & -1 & 0 & 0 & 0 & 0\\
0 & 1 & 1 & -1 & 0 & 0\\
0 & -2 & 0 & 1 & 1 & -1\\
\end{pmatrix},
\]
are the generators of the one-dimensional cones for its fan, see \cite{so-17}. Hence, the fan of $R_{2,2}$ is the normal fan of the polytope in $\R^3$ combinatorially equivalent to edge truncation $\cut_{I^1} I^3$ of the cube $I^3$. The columns of the following matrix
\[
\begin{pmatrix}
1 & -1 & 0 & 0 & 0 & 0 & 0\\
0 & 1 & 1 & -1 & 0 & 0 & -1\\
0 & -2 & 0 & 1 & 1 & -1 & 0\\
\end{pmatrix},
\]
are the generators of the one-dimensional cones for its fan. We remark that the last column in the above matrix corresponds to the truncation facet. This completes the proof.
\end{proof}

\begin{rem}
The fan of the toric $R_{1,2}$-bundle $R_{1,3}\to\P^1$ is obtained from the fan of the toric $\P^2$-bundle $BR_{1,3}\to\P^1$ in a similar way as in the proof of Corollary \ref{cor:toricr}. The corresponding map of fibers is the composition of the $\T^2$-equivariant blow-up $R_{1,2}\to BR_{1,2}$ from Remark \ref{rem:br21} and the $\T^2$-equivariant blow-up $BR_{1,2}\to\P^2$ at any fixed point. Hence, the columns of the following matrix
\[
\begin{pmatrix}
1 & -1 & 0 & 0 & 0 & 0 & 0\\
0 & 1 & 1 & -1 & 0 & 0 & -1\\
0 & 0 & 0 & 0 & 1 & -1 & 1\\
\end{pmatrix},
\]
are the generators of the one-dimensional cones for the fan of $R_{1,3}$. We remark that the last column in the above matrix corresponds to the truncation facet of $\cut_{I^1} I^3$.
\end{rem}

For any integer $q=1,\dots,i$ and any $\underline{u}\in\F_{2}^i$ denote by $r(q)=r(\underline{u},q)$ the vector $e_{b_{q}(\underline{u})}-e_{a_{q}(\underline{u})}\in\Z^i$. For any integer $s=1,\dots,j$ and any $\underline{v}\in\F_{2}^j$ denote by $r'(s)=r'(\underline{v},s)$ the vector $e_{a_{s}(\underline{v})+i-j}-e_{b_{s}(\underline{v})+i-j}\in\Z^i$. It is easy to prove the following two propositions.

\begin{pr}\label{pr:weightsrij}
Let $i,j\geq 0$ be any integers such that $i\geq j$. Then for any $\underline{u}\in\F^i$ and any $\underline{v}\in\F^j$ such that $a_i(\underline{u})\neq a_j(\underline{v})+(i-j)$, the weights of the $\T^i$-action \eqref{eq:sij_act} on $R_{i,j}$ at the fixed point $x_{\underline{u},\underline{v}}$ are the elements of the following multiset
\begin{equation}\label{eq:rijweights}
\big\lb r(\underline{u},q)\big|\ q=1,\dots,i\big\rb\cup\big\lb r'(\underline{v},s)\big|\ s=1,\dots,j\big\rb\setminus\big\lb e_{a_{j}(\underline{v})+(i-j)}-e_{a_{i}(\underline{u})}\big\rb.
\end{equation}
\end{pr}

\begin{rem}
If $a_{i}(\underline{u})<a_{j}(\underline{v})+(i-j)$, then $r(q)= e_{a_{j}(\underline{v})+(i-j)}-e_{a_{i}(\underline{u})}$, where $q=a_{j}(\underline{v})+(i-j)$. If $a_{i}(\underline{u})>a_{j}(\underline{v})+(i-j)$, then $r'(s)= e_{a_{j}(\underline{v})+(i-j)}-e_{a_{i}(\underline{u})}$, where $s=a_{i}(\underline{u})-(i-j)$. This justifies the exclusion in \eqref{eq:rijweights}.
\end{rem}

\begin{pr}\label{pr:2linrij}
Let $i,j\geq 0$ be any integers such that $i\geq j$. Let $\underline{u}\in\F^i$, $\underline{v}\in\F^j$ be any vectors satisfying $a_i(\underline{u})\neq a_j(\underline{v})+(i-j)$. Then the multiset of collections of pairwise proportional weights of the $\T^i$-action \ref{eq:sij_act} on $R_{i,j}$ at $x_{\underline{u},\underline{v}}$ consists of the (unordered) pairs $r(q), r'(s)$ of weights, where $q=1,\dots,i$ and $s=1,\dots,j$ are any integers satisfying the following conditions
\begin{equation}\label{eq:conddep}
\lb a_{q}(\underline{u}), b_{q}(\underline{u})\rb=\lb a_{s}(\underline{v})+(i-j), b_{s}(\underline{v})+(i-j)\rb\neq \lb a_{j}(\underline{v})+(i-j),a_{i}(\underline{u})\rb.
\end{equation}
The $\T^i$-invariant subvariety $Y=Y(x_{\underline{u},\underline{v}},[r(q)])$ of \/ $R_{i,j}$ corresponding to the weight $r(q)\in \Z^i$ (see Section \ref{sec:torus_act}) is \/ $\P^1(\underline{u},q)\times \P^1(\underline{v},s)\subseteq BF_{i}\times BF_{j}$. One has $Y^{\T^i}=\lb x_{\underline{u},\underline{v}},\ x_{\underline{u}+1_{q},\underline{v}},\ x_{\underline{u},\underline{v}+1_{s}},\ x_{\underline{u}+1_{q},\underline{v}+1_{s}}\rb$.
\end{pr}

For any integers $i,j\geq 0$ such that $i\geq j$, let $(\Gamma,\alpha)=(\Gamma(R_{i,j}),\alpha(R_{i,j}))$ be the weight hypergraph associated with the $\T^i$-action \eqref{eq:sij_act} on $R_{i,j}$ (notice that the Assumption \ref{asm:torusact} is satisfied for such an action).

\begin{cor}\label{cor:rvert}
Let $i,j$ be any integers such that $i>j\geq 2$. Then $x_{1_{i-1},1_{j}}$, $x_{1_{i-1},1_{j-1}+1_{j}}$, $x_{1_{i-1},0}\in V(R(\Gamma))$.
\end{cor}
\begin{proof}
Let $\underline{u}=1_{i-1}, \underline{v}=1_{j}$. To prove the first claim of the corollary it is enough to check that the condition \eqref{eq:conddep} fails for $x_{\underline{u},\underline{v}}$. Following the notation introduced in Proposition \ref{pr:2linrij}, if $q<i-1$, then $a_{q}(\underline{u})=0<i-j$, so \eqref{eq:conddep} does not hold. If $q=i-1$, then $b_{q}(\underline{u})=b_{i-1}(\underline{u})=0<i-j$, so the condition \eqref{eq:conddep} is not satisfied. If $q=i$, then
\[
\lbrace a_{q}(\underline{u}), b_{q}(\underline{u})\rbrace=\lbrace a_{i}(\underline{u}), b_{i}(\underline{u})\rbrace=\lbrace i,i-1\rbrace=\lb a_{j}(\underline{v})+(i-j),a_{i}(\underline{u})\rbrace.
\]
Hence, the condition \eqref{eq:conddep} is not satisfied in this case, as well. The proof of the second claim from the corollary is obtained by substituting $1_{i-1},1_{j-1}+1_{j}$ for $\underline{u},\underline{v}$ in the above proof, respectively. Now let $\underline{u}=1_{i-1}, \underline{v}=0$. If $q<i-1$, then $a_q(1_{i-1})=0$, and \eqref{eq:conddep} fails. If $q=i-1$, then $b_{i-1}(1_{i-1})=0$, and \eqref{eq:conddep} fails. Let $q=i$, so that $\lbrace a_{q}(\underline{u}), b_{q}(\underline{u})\rbrace=\lb i-1,i\rb$. Then for any $s=1,\dots,j$ one has $a_s(0)+i-j=i-j<i-1$, and \eqref{eq:conddep} fails. The proof is complete.
\end{proof}

\begin{pr}\label{pr:2sphrij}
Let $i,j\geq 0$ be any integers such that $i\geq j$. Let $\underline{u}\in\F_{2}^i$, $\underline{v}\in\F_{2}^j$ be any elements. Then:

$(i)$ For any integer $p=0,\dots, i$ satisfying $a_i(\underline{u}+1_p)\neq a_j(\underline{v})+(i-j)$, the hypergraph $\Gamma$ has a pointed hyperedge $E$ such that $x_{\underline{u},\underline{v}}, x_{\underline{u}+1_{p},\underline{v}}\in E$, and $\alpha(E)=\pm(e_{b_{p}(\underline{u})}-e_{a_p(\underline{u})})$;

$(ii)$ For any integer $q=0,\dots, j$ satisfying $a_i(\underline{u})\neq a_j(\underline{v}+1_{q})+(i-j)$, the hypergraph $\Gamma$ has a pointed hyperedge $E$ such that $x_{\underline{u},\underline{v}}, x_{\underline{u},\underline{v}+1_{q}}\in E$, and $\alpha(E)=\pm(e_{a_q(\underline{v})+(i-j)}-e_{b_{q}(\underline{v})+(i-j)})$;

$(iii)$ If there exist integers $r=1,\dots,i$ and $s=1,\dots,j$ satisfying $a_{i}(\underline{u}+1_{r})=a_{j}(\underline{v})+(i-j)$ and $a_{i}(\underline{u})=a_{j}(\underline{v}+1_{s})+(i-j)$, then the hypergraph $\Gamma$ has a pointed hyperedge $E$ such that $x_{\underline{u},\underline{v}},x_{\underline{u}+1_{r},\underline{v}+1_{s}}\in E$, and $\alpha(E)=\pm(e_{a_{j}(\underline{v})+(i-j)}-e_{a_{i}(\underline{u})})$.
\end{pr}
\begin{proof}
It is easy to prove that any of the following irreducible rational curves
\[
\biggl\lb\big(\lambda\cdot \C_{\underline{u}}+\mu\cdot \C_{1_p},\C_{\underline{v}}\big)\bigg|\ [\lambda:\mu]\in \P^1\biggr\rb,\ w=\pm(e_{b_{p}(\underline{u})}-e_{a_p(\underline{u})}),
\]
\[
\biggl\lb\big(\C_{\underline{u}},\lambda\cdot \C_{\underline{v}}+\mu\cdot \C_{1_q}\big)\bigg|\ [\lambda:\mu]\in \P^1\biggr\rb,\ w=\pm(e_{a_q(\underline{v})+(i-j)}-e_{b_{q}(\underline{v})+(i-j)}),
\]
\[
\biggl\lb\big(\lambda\cdot \C_{\underline{u}}+\mu\cdot \C_{\underline{u}+1_{r}},\mu\cdot \C_{\underline{v}+1_{s}}-\lambda\cdot \C_{\underline{v}}\big)\bigg|\ [\lambda:\mu]\in \P^1\biggr\rb,\ w=\pm(e_{a_{j}(\underline{v})+(i-j)}-e_{a_{i}(\underline{u})}),
\]
of $R_{i,j}$ is invariant under the induced effective action of the one-dimensional algebraic torus $\C^i/\Ker w$ from the $\T^i$-action \eqref{eq:sij_act} on $R_{i,j}$. For any of these curves the corresponding weight $w\in \Z^i$ given above is determined up to multiplication by $-1$. This completes the proof.
\end{proof}

\begin{rem}
The condition from the third case of Proposition \ref{pr:2sphrij} holds iff the numbers $a_{i}(\underline{u})$, $a_{j}(\underline{v})+(i-j)$ belong to the images of the functions $f(r):=a_{j}(\underline{v}+1_{r})+(i-j)$, $g(s):=a_{i}(\underline{u}+1_{s})$, where $r$ runs over $\lbrace 1,\dots,i\rbrace$ and $s$ runs over $\lbrace 1,\dots,j\rbrace$, respectively. If this condition holds, then the number of the weights from first two cases in Proposition \ref{pr:2sphrij} is equal to $i+j-2$, otherwise this number is equal to $i+j-1$.
\end{rem}

One can obtain the hypergraph $\Gamma(R_{i,j})$ from Propositions \ref{pr:2linrij} and \ref{pr:2sphrij}. The axial function $\alpha(R_{i,j})$ can be computed from Propositions \ref{pr:weightsrij} and \ref{pr:2sphrij}. Let $\nabla$ be a connection on $(\Gamma(R_{i,j}),\alpha(R_{i,j}))$ associated the action \eqref{eq:sij_act}. In the following proposition we compute the values of $\nabla$ that are necessary for the proof of Theorem \ref{thm:rijnontoric}.

\begin{figure}
\centering
\caption{Weight hypergraph of $R_{2,2}$. Black edges and grey hyperedges.}\label{fig:5prysm}
\begin{tikzpicture}%
	[x={(0.606396cm, -0.268332cm)},
	y={(0.794802cm, 0.232887cm)},
	z={(-0.023945cm, 0.934752cm)},
	scale=2.000000,
	back/.style={loosely dotted, thick},
	edge/.style={color=white!5!black, thick},
	facet/.style={fill=white, fill opacity=0.300000},
	facetg/.style={fill=gray, fill opacity=0.300000},
	facetb/.style={fill=blue, fill opacity=0.300000},
	facetlast/.style={fill=white!50!black,fill opacity=0.300000},
	vertex/.style={inner sep=1pt,circle,draw=white!5!black,fill=white!5!black,thick,anchor=base},
    vertexr/.style={inner sep=1pt,circle,draw=red!5!black,fill=white!5!black,thick,anchor=base}]	
%
%
\coordinate (1.00000, 0.00000, -0.50000) at (1.00000, 0.00000, -0.50000);
\coordinate (0.30902, 0.95106, -0.50000) at (0.30902, 0.95106, -0.50000);
\coordinate (-0.80902, 0.58779, -0.50000) at (-0.80902, 0.58779, -0.50000);
\coordinate (-0.80902, -0.58779, -0.50000) at (-0.80902, -0.58779, -0.50000);
\coordinate (0.30902, -0.95106, -0.50000) at (0.30902, -0.95106, -0.50000);
\coordinate (1.00000, 0.00000, 0.50000) at (1.00000, 0.00000, 0.50000);
\coordinate (0.30902, 0.95106, 0.50000) at (0.30902, 0.95106, 0.50000);
\coordinate (-0.80902, 0.58779, 0.50000) at (-0.80902, 0.58779, 0.50000);
\coordinate (-0.80902, -0.58779, 0.50000) at (-0.80902, -0.58779, 0.50000);
\coordinate (0.30902, -0.95106, 0.50000) at (0.30902, -0.95106, 0.50000);
\draw[edge] (0.30902, 0.95106, -0.50000) -- (-0.80902, 0.58779, -0.50000);
\draw[edge] (-0.80902, 0.58779, -0.50000) -- (-0.80902, -0.58779, -0.50000);
\draw[edge] (-0.80902, 0.58779, -0.50000) -- (-0.80902, 0.58779, 0.50000);
\node[vertex] at (-0.80902, 0.58779, -0.50000)     {};
\fill[facetg] (0.30902, -0.95106, 0.50000) -- (0.30902, -0.95106, -0.50000) -- (-0.80902, -0.58779, -0.50000) -- (-0.80902, -0.58779, 0.50000) -- cycle {};
\fill[facet] (0.30902, -0.95106, 0.50000) -- (1.00000, 0.00000, 0.50000) -- (0.30902, 0.95106, 0.50000) -- (-0.80902, 0.58779, 0.50000) -- (-0.80902, -0.58779, 0.50000) -- cycle {};
\fill[facet] (0.30902, -0.95106, 0.50000) -- (0.30902, -0.95106, -0.50000) -- (1.00000, 0.00000, -0.50000) -- (1.00000, 0.00000, 0.50000) -- cycle {};
\fill[facetg] (0.30902, 0.95106, 0.50000) -- (0.30902, 0.95106, -0.50000) -- (1.00000, 0.00000, -0.50000) -- (1.00000, 0.00000, 0.50000) -- cycle {};
\draw[edge] (1.00000, 0.00000, -0.50000) -- (0.30902, -0.95106, -0.50000);
\draw[edge] (1.00000, 0.00000, 0.50000) -- (0.30902, -0.95106, 0.50000);
\draw[edge] (0.30902, 0.95106, 0.50000) -- (-0.80902, 0.58779, 0.50000);
\draw[edge] (-0.80902, 0.58779, 0.50000) -- (-0.80902, -0.58779, 0.50000);
\node[vertex] at (1.00000, 0.00000, -0.50000)     {};
\node[vertex] at (0.30902, 0.95106, -0.50000)     {};
\node[vertex] at (-0.80902, -0.58779, -0.50000)     {};
\node[vertex] at (0.30902, -0.95106, -0.50000)     {};
\node[vertex] at (1.00000, 0.00000, 0.50000)     {};
\node[vertex] at (0.30902, 0.95106, 0.50000)     {};
\node[vertex] at (-0.80902, 0.58779, 0.50000)     {};
\node[vertex] at (-0.80902, -0.58779, 0.50000)     {};
\node[vertex] at (0.30902, -0.95106, 0.50000)     {};
\end{tikzpicture} 
\end{figure}

\begin{pr}\label{pr:rijconn}
Let $i,j\geq 0$ be any integers such that $i>j\geq 2$ holds. Then the graph $R(\Gamma)$ has the definite oriented edges $E_{0,1_{j}}^{0,0}, E_{0,0}^{1_{i-1},0}, E_{1_{i-1},0}^{1_{i-1},1_{j}}, E_{1_{i-1},1_{j}}^{1_{i-1},1_{j-1}+1_{j}}$, and the following identities hold.
\[
(i)\
\nabla_{E} E_{0, 1_{j}}^{1_{q},1_{j}}=E_{0,0}^{1_{q},0},\
\nabla_{E} E_{0, 1_{j}}^{0,1_{r}+1_{j}}=E_{0,0}^{0,1_{r}},\
\nabla_{E} E_{0, 1_{j}}^{0,0}=E_{0,0}^{0,1_{j}},\
\nabla_{E} E_{0, 1_{j}}^{1_{i-j},1_{j}}=E_{0,0}^{1_{i},0},
\]
where $q=1,\dots,i$ and $r=1,\dots,j$ are any integers such that $q\neq i-j,i$; $r\neq j$, and $E=E_{0,1_{j}}^{0,0}$;
\[
(ii)\
\nabla_{E} E_{0,0}^{1_{q},0}=E_{1_{i-1},0}^{1_{q}+1_{i-1},0},\
\nabla_{E} E_{0,0}^{0,1_{r}}=E_{1_{i-1},0}^{1_{i-1},1_{r}},\
\nabla_{E} E_{0,0}^{1_{i-1},0}=E_{1_{i-1},0}^{0,0},\
\nabla_{E} E_{0,0}^{0,1_{j-1}}=E_{1_{i-1},0}^{1_{i-j}+1_{i-1},0},\
\nabla_{E} E_{0,0}^{1_{i},0}=E_{1_{i-1},0}^{1_{i-1}+1_{i},0},
\]
where $q=1,\dots,i$ and $r=1,\dots,j$ are any integers such that $q\neq i-j,i-1,i$; $r\neq j-1$, and $E=E_{0,0}^{1_{i-1},0}$;
\[
(iii)\
\nabla_{E} E_{1_{i-1},0}^{1_{q}+1_{i-1},0}=E_{1_{i-1},1_{j}}^{1_{q}+1_{i-1},1_{j}},\
\nabla_{E} E_{1_{i-1},0}^{1_{i-1},1_{r}}=E_{1_{i-1},1_{j}}^{1_{i-1},1_{r}+1_{j}},\
\nabla_{E} E_{1_{i-1},0}^{0,0}=E_{1_{i-1},1_{j}}^{0,1_{j}},\
\nabla_{E} E_{1_{i-1},0}^{1_{i-1}+1_{i},0}=E_{1_{i-1},1_{j}}^{1_{i-1},1_{j-1}+1_{j}},\
\]
\[
\nabla_{E} E_{1_{i-1},0}^{1_{i-1},1_{j}}=E_{1_{i-1},1_{j}}^{1_{i-1},0},
\]
where $q=1,\dots,i$ and $r=1,\dots,j$ are any integers such that $q\neq i-1,i$; $r\neq j-1,j$, and $E=E_{1_{i-1},0}^{1_{i-1},1_{j}}$;
\[
(iv)\
\nabla_{E} E_{1_{i-1},1_{j}}^{1_{q}+1_{i-1},1_{j}}=E_{1_{i-1},1_{j-1}+1_{j}}^{1_{q}+1_{i-1},1_{j-1}+1_{j}},\
\nabla_{E} E_{1_{i-1},1_{j}}^{1_{i-1},1_{r}+1_{j}}=E_{1_{i-1},1_{j-1}+1_{j}}^{1_{i-1},1_{r}+1_{j-1}+1_{j}},\
\nabla_{E} E_{1_{i-1},1_{j}}^{0,1_{j}}=E_{1_{i-1},1_{j-1}+1_{j}}^{0,1_{j-1}+1_{j}},\
\]
\[
\nabla_{E} E_{1_{i-1},1_{j}}^{1_{i-1},1_{j-1}+1_{j}}=E_{1_{i-1},1_{j-1}+1_{j}}^{1_{i-1},1_{j}},\
\nabla_{E} E_{1_{i-1},1_{j}}^{1_{i-1},0}=E_{1_{i-1},1_{j-1}+1_{j}}^{1_{i-1}+1_{i},1_{j-1}},
\]
where $q=1,\dots,i$ and $r=1,\dots,j$ are any integers such that $q\neq i-1,i$; $r\neq j-1,j$, and $E=E_{1_{i-1},1_{j}}^{1_{i-1},1_{j-1}+1_{j}}$.
\end{pr}
\begin{proof}
Notice that the edge $E$ from any of the four cases from the proposition has vertex $x_{\underline{u},\underline{v}}$ for some $\underline{u},\underline{v}$ such that $a_{i}(\underline{u})<i-j$. By Proposition \ref{pr:weightsrij}, $2$-linear independence of weights now follows from definiteness of $E$. To prove the identities from the claim of the proposition we deduce the congruences between the weights of $(\Gamma,\alpha)$ given in Fig. \ref{fig:weightsrij}. (Here we follow the notation introduced in the proof of Proposition \ref{pr:brijconn}).
\end{proof}

\begin{figure}
\centering
\caption{Congruences of weights for $\Gamma(R_{i,j})$ in four tables. The values of $\alpha$ on the edges from the columns $1,4$ are given in the columns $2,3$ rowwise, respectively. The conditions for the integers $r,q$ are given in the column $5$.}\label{fig:weightsrij}
{\renewcommand{\arraystretch}{2}
(i)\
\begin{tabular}{ |l|l|l|l|l| }
  \hline
  \multicolumn{5}{|c|}{$E_{0,1_{j}}^{0,0}$, $\mod e_{i}-e_{i-j}$} \\
  \hline
  $E_{0, 1_{j}}^{1_{q},1_{j}}$ & $e_{q}$ & $ e_{q}$ & $E_{0,0}^{1_{q},0}$ & $q\neq i-j,i$\\
  \hline
  $E_{0, 1_{j}}^{0,1_{r}+1_{j}}$ & $e_{i-j}-e_{r+(i-j)}$ & $e_{i-j}-e_{r+(i-j)}$ & $E_{0,0}^{0,1_{r}}$ & $r\neq j$\\
  \hline
  $E_{0, 1_{j}}^{0,0}$ & $e_{i}-e_{i-j}$ & $e_{i-j}-e_{i}$ & $E_{0,0}^{0,1_{j}}$ & \\
  \hline
  $E_{0, 1_{j}}^{1_{i-j},1_{j}}$ & $e_{i-j}$ & $e_{i}$ & $E_{0,0}^{1_{i},0}$ & \\
  \hline
\end{tabular}
\smallskip

(ii)\
\begin{tabular}{ |l|l|l|l|l| }
  \hline
  \multicolumn{5}{|c|}{$E_{0,0}^{1_{i-1},0}$, $\mod e_{i-1}$} \\
  \hline
  $E_{0,0}^{1_{q},0}$ & $e_{q}$ & $ e_{q}$ & $E_{1_{i-1},0}^{1_{q}+1_{i-1},0}$ & $q\neq i-j,i-1,i$\\
  \hline
  $E_{0,0}^{0,1_{r}}$ & $e_{i-j}-e_{r+(i-j)}$ & $e_{i-j}-e_{r+(i-j)}$ & $E_{1_{i-1},0}^{1_{i-1},1_{r}}$ & $r\neq j-1$\\
  \hline
  $E_{0,0}^{1_{i-1},0}$ & $e_{i-1}$ & $-e_{i-1}$ & $E_{1_{i-1},0}^{0,0}$ & \\
  \hline
  $E_{0,0}^{0,1_{j-1}}$ & $e_{i-j}-e_{i-1}$ & $e_{i-j}$ & $E_{1_{i-1},0}^{1_{i-j}+1_{i-1},0}$ & \\
  \hline
  $E_{0,0}^{1_{i},0}$ & $e_{i}$ & $e_{i}-e_{i-1}$ & $E_{1_{i-1},0}^{1_{i-1}+1_{i},0}$ & \\
  \hline
\end{tabular}
\smallskip

(iii)\
\begin{tabular}{ |l|l|l|l|l| }
  \hline
  \multicolumn{5}{|c|}{$E_{1_{i-1},0}^{1_{i-1},1_{j}}$, $\mod e_{i-j}-e_{i}$} \\
  \hline
  $E_{1_{i-1},0}^{1_{q}+1_{i-1},0}$ & $e_{q}$ & $ e_{q}$ & $E_{1_{i-1},1_{j}}^{1_{q}+1_{i-1},1_{j}}$ & $q\neq i-1,i$\\
  \hline
  $E_{1_{i-1},0}^{1_{i-1},1_{r}}$ & $e_{i-j}-e_{r+(i-j)}$ & $e_{i-j}-e_{r+(i-j)}$ & $E_{1_{i-1},1_{j}}^{1_{i-1},1_{r}+1_{j}}$ & $r\neq j-1,j$\\
  \hline
  $E_{1_{i-1},0}^{0,0}$ & $-e_{i-1}$ & $-e_{i-1}$ & $E_{1_{i-1},1_{j}}^{0,1_{j}}$ & \\
  \hline
  $E_{1_{i-1},0}^{1_{i-1}+1_{i},0}$ & $e_{i}-e_{i-1}$ & $e_{i-j}-e_{i-1}$ & $E_{1_{i-1},1_{j}}^{1_{i-1},1_{j-1}+1_{j}}$ & \\
  \hline
  $E_{1_{i-1},0}^{1_{i-1},1_{j}}$ & $e_{i-j}-e_{i}$ & $e_{i}-e_{i-j}$ & $E_{1_{i-1},1_{j}}^{1_{i-1},0}$ & \\
  \hline
\end{tabular}
\smallskip

(iv)\
\begin{tabular}{ |l|l|l|l|l| }
  \hline
  \multicolumn{5}{|c|}{$E_{1_{i-1},1_{j}}^{1_{i-1},1_{j-1}+1_{j}}$, $\mod e_{i-j}-e_{i-1}$} \\
  \hline
  $E_{1_{i-1},1_{j}}^{1_{q}+1_{i-1},1_{j}}$ & $e_{q}$ & $ e_{q}$ & $E_{1_{i-1},1_{j-1}+1_{j}}^{1_{q}+1_{i-1},1_{j-1}+1_{j}}$ & $q\neq i-1,i$\\
  \hline
  $E_{1_{i-1},1_{j}}^{1_{i-1},1_{r}+1_{j}}$ & $e_{i-j}-e_{r+(i-j)}$ & $e_{i-j}-e_{r+(i-j)}$ & $E_{1_{i-1},1_{j-1}+1_{j}}^{1_{i-1},1_{r}+1_{j-1}+1_{j}}$ & $r\neq j-1,j$\\
  \hline
  $E_{1_{i-1},1_{j}}^{0,1_{j}}$ & $-e_{i-1}$ & $-e_{i-1}$ & $E_{1_{i-1},1_{j-1}+1_{j}}^{0,1_{j-1}+1_{j}}$ & \\
  \hline
  $E_{1_{i-1},1_{j}}^{1_{i-1},1_{j-1}+1_{j}}$ & $e_{i-j}-e_{i-1}$ & $e_{i-1}-e_{i-j}$ & $E_{1_{i-1},1_{j-1}+1_{j}}^{1_{i-1},1_{j}}$ & \\
  \hline
  $E_{1_{i-1},1_{j}}^{1_{i-1},0}$ & $e_{i}-e_{i-j}$ & $e_{i}-e_{i-1}$ & $E_{1_{i-1},1_{j-1}+1_{j}}^{1_{i-1}+1_{i},1_{j-1}}$ & \\
  \hline
\end{tabular}}
\end{figure}

\begin{proof}[Proof of Theorem \ref{thm:rijnontoric}]
For any integers $i,j\geq 0$ such that $\min\lb i,j\rb=0,1$ or $i=j=2$, the claim of the theorem holds by Corollary \ref{cor:toricr}. Let $i,j\geq 0$ be any integers that satisfy neither of these conditions. Without loss of generality, we prove the claim for the case $i> j\geq 2$ only, because $R_{i,j}\simeq R_{j,i}$. Suppose that $R_{i,j}$ is a toric variety. The idea of the following argument is to find a $3$-face in $\Gamma$ with nontrivial action of the monodromy map on the external edges to $G$ along some loops in $G$. By Proposition \ref{pr:2linrij}, for any $\underline{v}\in\F_2^j$ the vertex $x_{0,\underline{v}}$ belongs to $V(R(\Gamma))$, because $a_{q}(\underline{0})=0<i-j$ holds for any $q=1,\dots,i$. Hence, one has $x_{0,1_{j}}, x_{0,0}\in V(R(\Gamma))$. By Corollary \ref{cor:rvert}, one has $x_{1_{i-1},1_{j}}, x_{1_{i-1},1_{j-1}+1_{j}}, x_{1_{i-1},0}\in V(R(\Gamma))$. We conclude that the connection $\nabla$ on $(\Gamma,\alpha)$ is well defined along the edges of the edge path $\gamma=( E_{0,1_{j}}^{0,0},E_{0,0}^{1_{i-1},0},E_{1_{i-1},0}^{1_{i-1},1_{j}})$ as well as along the oriented edge $E_{1_{i-1},1_{j}}^{1_{i-1},1_{j-1}+1_{j}}$ of $G(\Gamma)$.

It follows from the assumption and Corollary \ref{cor:extassum} that there exists an extension of the $\T^i$-action \eqref{eq:sij_act} on $R_{i,j}$ to a toric action with the GKM-graph $(\Gamma',\alpha')$ and the connection $\nabla'$. Let $v$ be any vertex of $\gamma$ or of the oriented edge $E_{1_{i-1},1_{j}}^{1_{i-1},1_{j-1}+1_{j}}$. By Proposition \ref{pr:invsubm}, one has $E_{v}(\Gamma')=E_{v}(\Gamma)$. In particular, $\gamma$ is a subgraph of $\Gamma'$. By Proposition \ref{pr:rijconn}, the subgraph $\gamma$ is invariant in $\Gamma$. The connections $\nabla$, $\nabla'$ coincide along the edges of $\gamma$ as well as along the edge $E_{1_{i-1},1_{j}}^{1_{i-1},1_{j-1}+1_{j}}$ of $G(\Gamma)$ due to definiteness of $\nabla$ at these edges. By Lemma \ref{lm:spanface}, there exists a unique $3$-face $G$ of $\Gamma'$ with respect to $\nabla'$ such that $G$ contains the edges $E_{0,1_{j}}^{0,0},E_{0,1_{j}}^{1_{i-1},1_{j}},E_{0,1_{j}}^{1_{i-j},1_{j}}$. In particular, $\gamma\subset G$ and $E_{0,1_{j}}^{0,1_{j-1}+1_{j}}\not\subset G$. By Lemma \ref{lm:nochord}, this implies that the vertex $x_{0,1_{j-1}+1_{j}}$ of $\Gamma'$ does not belong to $G$. On the other hand, by Proposition \ref{pr:rijconn}, the edges
\[
\Pi_{\gamma} E_{0,1_{j}}^{1_{i-j},1_{j}}=
\nabla_{E_{1_{i-1},0}^{1_{i-1},1_{j}}}\nabla_{E_{0,0}^{1_{i-1},0}}\nabla_{E_{0,1_{j}}^{0,0}} E_{0,1_{j}}^{1_{i-j},1_{j}}=
\nabla_{E_{1_{i-1},0}^{1_{i-1},1_{j}}}\nabla_{E_{0,0}^{1_{i-1},0}} E_{0,0}^{1_{i},0}=
\nabla_{E_{1_{i-1},0}^{1_{i-1},1_{j}}} E_{1_{i-1},0}^{1_{i-1}+1_{i},0}=
E_{1_{i-1},1_{j}}^{1_{i-1},1_{j-1}+1_{j}},
\]
\[
\nabla_{E_{1_{i-1},1_{j}}^{1_{i-1},1_{j-1}+1_{j}}} E_{1_{i-1},1_{j}}^{0,1_{j}}=E_{1_{i-1},1_{j-1}+1_{j}}^{0,1_{j-1}+1_{j}}
\]
belong to $G$. Hence, $x_{0,1_{j-1}+1_{j}}\in E_{1_{i-1},1_{j-1}+1_{j}}^{0,1_{j-1}+1_{j}}$ belongs to $G$. This contradiction proves the theorem.
\end{proof}

\begin{figure}
\centering
\caption{Subgraph of the weight hypergraph $\Gamma=\Gamma(R_{i,j})$ for $i>j\geq 2$}\label{fig:sijsubg}
\begin{tikzpicture}
\begin{scope}[every node/.style={circle,fill,inner sep=0pt, minimum size=6pt,node distance=6pt}]
    \node (1) at (0,-1) [label={[shift={(-0.6,0.0)}]$x_{0,1_{j}}$}]{};{1};
    \node (2) at (2,-1) [label={[shift={(0.0,-0.1)}]below:$x_{0,0}$}]{};{2};
    \node (3) at (2,1) [label={$x_{1_{i-1},0}$}]{};{3};
    \node (4) at (0,1) [label={[shift={(0.0,-0.1)}]$x_{1_{i-1},1_{j}}$}]{};{4};
    \node (5) at (-2,1) [label={[shift={(0.0,-0.4)}]$x_{1_{i-1},1_{j-1}+1_{j}}$}]{};{5};
    \node (6) at (-2,-1) [label={[shift={(0.0,0.3)}]below:$x_{0,1_{j-1}+1_{j}}$}]{};{6};
    \node (7) at (0,-2) [label={[shift={(0.0,0.1)}]below:$x_{1_{i-j},1_{j}}$}]{};{7};
\end{scope}

\begin{scope}[
               ->,>=stealth]
    \draw [black, very thick, ->] plot [smooth, tension=2] coordinates {(0.25,-0.8) (1.75,0) (0.25,0.8)} ;
    \node at (1,0) {$\gamma$};
\end{scope}

\begin{scope}[
              every edge/.style={draw=black,very thick}]
    \path (1) edge (2);
    \path (1) edge (4);
    \path (2) edge (3);
    \path (3) edge (4);
    \path (4) edge (5);
    \path (5) edge (6);
    \path (6) edge (1);
    \path (7) edge (1);
\end{scope}

\end{tikzpicture}
\end{figure}

\section{Acknowledgements}

The author expresses his gratitude to V.M.~Buchstaber and T.E.~Panov for the proposal of problems studied here, many fruitful discussions and constant support while writing this paper. Special thanks are to I.~Arzhantsev, who spotted an error in the early version of the text. It is pleasure to acknowledge several important discussions with A.~Ayzenberg and C.~Shramov.

\appendix

\section{Descriptions of $BR_{i,j}$ and $R_{i,j}$ in terms of blow-ups and fiber bundles}\label{sec:blowup}

Here is the list of main results of this section. In Proposition \ref{pr:brijdef}, for any integers $i,j\geq 1$ we prove that the variety $BR_{i,j}$ is obtained from $H_{i,j}$ by the sequence of $j-1$ blow-ups along strict transforms of the subvarieties of $H_{i,j}$ which are isomorphic to $H_{i,j-k-1}$, where $k$ runs over $\lbrace 1,\dots,j-1\rbrace$. In Theorem \ref{thm:1blowbr}, for any integers $i,j\geq 1$ we prove that $BR_{i,j}$ is the blow-up of $BF_{i-1}\times \P^{j}$ along the subvariety isomorphic to $BR_{i-1,j-1}$. We also find two similar descriptions for Ray hypersurfaces in terms of blow-ups in Proposition \ref{pr:sijdef} and Theorem \ref{thm:1blow}. We find the structures of algebraic fiber bundles on generalised Buchstaber-Ray and Ray hypersurfaces in Theorems \ref{thm:fbbr} and \ref{thm:fbr}, respectively. Throughout of this section we use the notions introduced in Section \ref{sec:def}.

\subsection{Generalised Buchstaber-Ray hypersurface $BR_{i,j}$}

Let $i,j\geq 0$ be any integers. Let $(z,w)=([z_0:z_1:\dots:z_i],[w_0:w_1:\dots:w_j])$ be the homogeneous coordinates of $\P^i\times \P^j$. Denote the subvariety $\lb z_{0}=\cdots=z_{k}=0\rb$ in $\P^i\times\P^j$ by $Z_{k}$ for any $k=1,\dots,i-1$.

\begin{pr}\label{pr:brijdef}
$(i)$ The divisor \/ $BR_{i,j}$ in \/ $BF_{i}\times \P^j$ corresponds to the algebraic line bundle \/ $\beta_{i}^{\vee}\otimes(\eta')^{\vee}$ over \/ $BF_{i}\times \P^j$;

$(ii)$ For any $k=1,\dots,i-1$, two subvarieties $Z_k$ and $\widehat{H}_{i,j}$ (see Definition \ref{defn:Milnor}) intersect transversally in \/ $\P^i\times\P^{j}$. The subvariety $Z_k\cap \widehat{H}_{i,j}$ of \/ $\P^i\times\P^j$ is isomorphic to $H_{i-k-1,j}$.

$(iii)$ The variety $BR_{i,j}$ is a strict transform of \/ $\widehat{H}_{i,j}$ under the sequence of consecutive blow-ups of \/ $\P^i\times\P^{j}$ along strict transforms of the subvarieties \/ $Z_k$ in \/ $\P^i\times\P^{j}$, where $k$ runs over \/ $\lbrace 1,\dots,i-1\rbrace$. In particular, $BR_{i,j}$ is the nonsingular variety that is obtained from $\widehat{H}_{i,j}$ by $i-1$ blow-ups with nonsingular centres.
\end{pr}
\begin{proof}
Under the natural embedding $BF_{i}\to \prod_{r=0}^{i}\P^r$ the restriction of the homogeneous coordinate $z_{i,q}$ to $BF_{i}$ is the global section of the sheaf $\beta_{i}^{\vee}$ over $BF_i$. Hence, the left-hand side of the equation \eqref{eq:brijgen} is a global section of the sheaf $\beta_{i}^{\vee}\otimes(\eta')^{\vee}$ over $BF_{i}\times \P^j$. This proves $(i)$.

Consider the isomorphism
\[
Z_{k}\to\P^{i-k-1}\times \P^j,\ (z,w)\mapsto ([z_{k+1}:\cdots:z_i],[w_0:\cdots:w_j]).
\]
Under this isomorphism, $Z_{k}\cap \widehat{H}_{i,j}$ maps isomorphically to the hypersurface $\widehat{H}_{i-k-1,j}\subset \P^{i-k-1}\times \P^j$. We compute the dimensions as follows.
\[
\dim Z_{k}=i+j-k-1,\ \dim \widehat{H}_{i,j}=i+j-1,\ \dim \widehat{H}_{i-k-1,j}=i-k-1+j-1.
\]
We obtain
\[
\dim Z_{k} + \dim \widehat{H}_{i,j} - \dim \widehat{H}_{i-k-1,j}=\dim(\P^i\times\P^j),
\]
which proves $(ii)$. The projection
\begin{equation}\label{eq:projbfp}
BF_{i}\times \P^j\to \P^i\times\P^j,\ (l_0,\dots,l_{i},l')\mapsto (l_{i},l'),
\end{equation}
decomposes into a sequence of blow-ups along strict transforms of $Z_{k}$, where $k$ runs over $\lbrace 1,\dots,i-1\rbrace$. The subvarieties $Z_k$ and $\widehat{H}_{i,j}$ intersect transversally in $\P^i\times \P^j$ by $(ii)$ for any $k=1,\dots,i-1$. Hence, the argument from \cite[pp. 604--605]{gr-ha-78} applies. Therefore, the restriction of the projection \eqref{eq:projbfp} to $BR_{i,j}$ decomposes into a sequence of blow-ups of $\widehat{H}_{i,j}$. This proves $(iii)$.
\end{proof}

\begin{rem}
The arrangement $\lb Z_{k}\cap \widehat{H}_{i,j}|\ k=1,\dots,i-1\rb$ in $\widehat{H}_{i,j}$ is a simple instance of a building set in terminology of \cite{li-09}, because the elements of this arrangement form a chain of embeddings of submanifolds in $\widehat{H}_{i,j}$. The wonderful compactification of this arrangement is isomorphic to the iterated blow-up of $Z_{k}\cap \widehat{H}_{i,j}$, where $k$ runs over $1,\dots,i-1$. This can be seen either directly from the embedding of a blow-up to the Cartesian product, or from \cite[Theorem 1.3, p.537]{li-09}. Comparing this with Proposition \ref{pr:brijdef} $(iii)$, one obtains that $BR_{i,j}$ is a wonderful compactification. The embedding of $BR_{i,j}$ obtained from wonderful compactification is to $\prod_{k=1}^{i-1} \Bl_{Z_{k}\cap \widehat{H}_{i,j}} \widehat{H}_{i,j}$, where the blow-up centres are described by Proposition \ref{pr:brijdef} $(ii)$. In this paper we utilize the different embedding $BR_{i,j}\to BF_i\times\P^j\to (\prod_{k=0}^{i} \P^k)\times \P^j$.
\end{rem}

\begin{rem}
The projective toric variety $BR_{2,2}$ is a Bott tower (see \cite{bu-ra-98} or \cite[p.769, Proposition 9]{so-17}). One can easily compute the fan $\Sigma$ in $\R^3$ of the toric variety $BR_{2,2}$ by following the general description of the fan for any Bott tower (see \cite[p.290, Corollary 7.8.7]{bu-pa-15}). The columns of the following matrix
\[
\begin{pmatrix}
1 & 0 & 0 & -1 & 0 & 0\\
0 & 1 & 0 & 1 & -1 & 0\\
0 & 0 & 1 & -2 & 1 & -1
\end{pmatrix},
\]
are the generators of the respective one-dimensional cones $\sigma_{0,1},\sigma_{0,2},\sigma_{0,3},\sigma_{1,1},\sigma_{1,2},\sigma_{1,3}$ of $\Sigma$. The three-dimensional cones of $\Sigma$ are
\[
\lb\R\la \sigma_{u_1,1},\sigma_{u_2,2},\sigma_{u_3,3}\ra|\ (u_1,u_2,u_3)\in \F_{2}^3\rb.
\]
\end{rem}

Let $i,j\geq 1$ be any integers. Denote by $E$ the subvariety in $BF_{i}\times \P^{j}$ consisting of all points $(l_{0},\dots,l_{i},l')$ such that $l_{i-1}\perp \overline{l'}$ and $l'\perp \C_{\max\lb i,j\rb}$ hold. Each of the conditions $l'\perp \C_{\max\lb i,j\rb}$ and $\overline{l'}\perp \C_{\max\lb i,j\rb}$ is equivalent to the condition $w_j=0$. Therefore, $l_{i}\perp \overline{l'}$ holds for any point from $E$. In particular, $E\subset BR_{i,j}$. Denote the natural embedding $BR_{i,j}\to BF_{i}\times \P^j$ from Definition \ref{def:brij} by $f_{i,j}$. Consider the projection
\[
\pi\colon BR_{i,j}\to BF_{i-1}\times \P^j,\ (l_0,\dots,l_i,l')\mapsto (l_0,\dots,l_{i-1},l').
\]
Consider the embedding $g\colon BF_{i-1}\times \P^{j-1}\to BF_{i-1}\times \P^j$ induced by the identity map on $BF_{i-1}$ and by the embedding $\P^{j-1}\to\P^{j}$ given by $[w_{0}:\dots:w_{j-1}]\mapsto [w_{0}:\dots:w_{j-1}:0]$.

\begin{thm}\label{thm:1blowbr}
$(i)$ The normal bundle of the embedding $g\circ f_{i-1,j-1}: BR_{i-1,j-1}\to BF_{i-1}\times \P^j$ is \/ $(\beta_{i-1}^{\vee}\oplus\underline{\C})\otimes(\eta')^{\vee}$;

\smallskip
\noindent $(ii)$ The morphism $\pi\colon BR_{i,j}\to BF_{i-1}\times \P^j$ is the blow-up of \/ $BF_{i-1}\times \P^j$ along $g\circ f_{i-1,j-1}(BR_{i-1,j-1})$ with the exceptional divisor $E$;

\smallskip
\noindent $(iii)$ The exceptional divisor \/ $E$ is the total space of the algebraic fiber bundle \/ $\P(\beta_{i-1}\oplus\underline{\C})\to BR_{i-1,j-1}$.

\end{thm}
\begin{proof}
The normal bundle of the embedding $g$ is isomorphic to $(\eta')^{\vee}$. The normal bundle of the embedding $f_{i-1,j-1}$ is $\beta_{i-1}^{\vee}\otimes(\eta')^{\vee}$ by Proposition \ref{pr:brijdef} $(i)$. This proves $(i)$. The subvariety $g\circ f_{i-1,j-1}(BR_{i-1,j-1})$ of $BF_{i-1}\times\P^j$ is given by the equations $\lb s_{1}=s_{2}=0\rb$, where
\[
s_1:=\sum_{k=1}^{\min\lb i,j\rb} z_{i-1,i-k}w_{j-k},\ s_{2}:=w_{j},
\]
are global sections of the algebraic line bundles $\beta_{i-1}^{\vee}\otimes(\eta')^{\vee}$ and $(\eta')^{\vee}$ over $BF_{i-1}\times\P^j$, respectively. The regular morphism $\pi$ is an isomorphism outside the zero locus $\lb s_{1}=s_{2}=0\rb$. The restriction of the morphism $\pi$ to the preimage of $\lb s_{1}=s_{2}=0\rb$ is $E=\P(\beta_{i-1}\oplus\underline{\C})$. Since this projective bundle is isomorphic to the projectivisation of the normal bundle of $g\circ f_{i-1,j-1}(BR_{i-1,j-1})$ in $ BF_{i-1}\times\P^j$ by basic property of a blow-up, we conclude that $\pi$ is the required blow-up. Hence, $E$ is the exceptional divisor of the blow-up $\pi$. This proves $(ii)$ and $(iii)$.
\end{proof}

\begin{thm}\label{thm:fbbr}
$(i)$ Let $i,j\geq 0$ be any integers such that $i\geq j+1$. Then the morphism
\[
p\colon BR_{i,j}\to BF_{i-j-1},\ (l_{0},\dots,l_{i},l')\mapsto (l_0,l_{1},\dots,l_{i-j-1}),
\]
is an algebraic $BR_{j+1,j}$-bundle;

$(ii)$ Let $i\geq 1$ be any integer. Then the morphism
\[
p_{1}\colon BR_{i,i}\to \P^1,\ (l_{0},\dots,l_{i},l')\mapsto (l_0,l_{1}),
\]
is an algebraic $BR_{i-1,i}$-bundle.
\end{thm}
\begin{proof}
We prove the claims of this theorem by constructing the trivialisations for the corresponding algebraic fiber bundles. Let $L=(l_{0},\dots,l_{i},l')\in BR_{i,j}$. Recall that any $N=(l_0,\dots,l_{i-j-1})\in BF_{i-j-1}$ is determined by the tuple $z=(z_0,\dots,z_{i-j-1})$ of the homogeneous coordinates. Let $U_{k}=\lbrace z_{i-j-1,k}\neq 0\rbrace$ be the open subvariety of $BF_{i-j-1}$, where $k=0,\dots,i-j-1$. For any $k=0,\dots,i-j-1$ there exists a unique morphism $A_k\colon U_k\times\C^{i+1}\to \C^{i+1}$ such that
\begin{itemize}
\item $A_{k}(N,-)\colon \C^{i+1}\to \C^{i+1}$ is a $\C$-linear map for any $N\in U_k$;\\
\item $A_{k}(N,-)\colon \C^{i+1}\to \C^{i+1}$ takes $e_0,\dots,\widehat{e_k},\dots,e_{i-j-1}$ to $e_{j+2},\dots,e_i$, and takes $e_{i-j},\dots,e_{i}$ to $e_{1},\dots,e_{j+1}$, respectively, for any $N\in U_k$;\\
\item the following (well-defined) conditions (where $z_{i-j-1}=z_{i-j-1}(N)$ is a tuple of homogeneous coordinates) hold:
\[
A_{k}\bigl(N,\frac{1}{z_{i-j-1,k}}(z_{i-j-1,0}\cdot e_{0}+\cdots +z_{i-j-1,i-j-1}\cdot e_{i-j-1})\bigr)=e_{0},\ \forall N\in U_k.
\]
\end{itemize}

The desired trivialisation for the fiber bundle $p$ is
\[
p^{-1}(U_k)\to BR_{j+1,j}\times U_{k},\ L\mapsto \biggl(\bigl(A_k(p(L),l_{i-j-1}),\dots,A_k(p(L),l_{i}),l'\bigr),p(L)\biggr),\ k=0,\dots,i-j-1.
\]

$(ii)$ Let $z=(z_0,z_1)$ be the tuple of homogeneous coordinates of any $N\in BF_1$. Let $V_{k}=\lbrace z_{1,k}\neq 0\rbrace$ be the open subvariety of $BF_{1}$, where $k=0,1$. For any $k=0,1$ let $B_k\colon V_k\times\C^{i+1}\to \C^{i+1}$ be a morphism such that for any $N\in V_k$ the $\C$-linear map $B_{k}(N,-)\colon \C^{i+1}\to \C^{i+1}$ takes $e_2,\dots,e_i$ to $e_1,\dots,e_{i-1}$ and takes $\lbrace e_0,e_1\rbrace\setminus e_{k}$ to $e_1$, respectively. Furthermore, for any $k=0,1$ there exists a unique $B_k$ satisfying the well-defined conditions
\[
B_{k}\bigl(N,\frac{1}{z_{1,k}}(z_{1,0}e_{0}+z_{1,1}e_{1})\bigr)=e_0,\ \forall N\in V_k.
\]
The desired trivialisation for the fiber bundle $p_1$ is
\[
p^{-1}_{1}(U_k)\to BR_{i-1,i}\times V_{k},\ L\mapsto \biggl(\bigl(B_k(p_1(L),l_{1}),\dots,B_k(p_1(L),l_{i}),l'\bigr),p_1(L)\biggr),\ k=0,1.
\]
\end{proof}

\subsection{Ray variety $R_{i,j}$}

Let $i,j\geq 0$ be any integers. Denote the subvariety $\lb w_{0}=\cdots=w_{k}=0\rb$ in $BF_{i}\times\P^{j}$ by $W_{k}$ for any $k=1,\dots,j-1$. The proof of the following proposition is similar to the proof of Proposition \ref{pr:brijdef}.

\begin{pr}\label{pr:sijdef}
$(i)$ The divisor \/ $R_{i,j}$ corresponds to the algebraic line bundle \/ $\beta_{i}^{\vee}\otimes(\beta'_{j})^{\vee}$ over $BF_{i}\times BF_{j}$;

$(ii)$ For any $k=1,\dots,j-1$, two subvarieties $W_k$ and $BR_{i,j}$ intersect transversally in \/ $BF_{i}\times\P^{j}$. The subvariety $W_k \cap BR_{i,j}$ of \/ $BF_{i}\times \P^j$ is isomorphic to $BR_{i,j-1-k}$.

$(iii)$ The variety $R_{i,j}$ is a strict transform of \/ $BR_{i,j}$ under the sequence of consecutive blow-ups of \/ $BF_{i}\times\P^{j}$ along strict transforms of the subvarieties $W_k$ in \/ $BF_{i}\times\P^{j}$, where $k$ runs over $\lbrace 1,\dots,j-1\rbrace$. In particular, $R_{i,j}$ is the nonsingular projective variety that is obtained from \/ $BR_{i,j}$ by $j-1$ blow-ups with nonsingular centres.
\end{pr}

\begin{rem}
The arrangement $\lb W_{k}\cap BR_{i,j}|\ k=1,\dots,j-1\rb$ in $BR_{i,j}$ is also a building set in terminology of \cite{li-09}, because the elements of this arrangement form a chain of embeddings of submanifolds in $BR_{i,j}$. The wonderful compactification of this arrangement is isomorphic to the iterated blow-up of $W_{k}\cap BR_{i,j}$, where $k$ runs over $1,\dots,j-1$. This can be seen either directly from the embedding of a blow-up to the Cartesian product, or from \cite[Theorem 1.3, p.537]{li-09}. Comparing this with Proposition \ref{pr:sijdef} $(iii)$, one obtains that $R_{i,j}$ is a wonderful compactification. The embedding of $R_{i,j}$ obtained from wonderful compactification is to $\prod_{k=1}^{j-1} \Bl_{W_{k}\cap BR_{i,j}} BR_{i,j}$, where the blow-up centres are described by Proposition \ref{pr:sijdef} $(ii)$. In this paper we utilize the different embedding $R_{i,j}\to BF_i\times BF_j\to (\prod_{k=0}^{i} \P^k)\times (\prod_{q=0}^{j} \P^q)$.
\end{rem}

Let $i,j\geq 1$ be any integers. Denote by $D$, $D_1$, $D_2$ the subvarieties in $BF_{i}\times BF_{j}$ consisting of all points $(l_{0},\dots,l_{i},l'_0,\dots,l'_j)$ such that $l_{i-1}\perp \overline{l'_{j-1}}$ and $l_{i}\perp \overline{l'_{j}}$ hold; $l_{i}\perp \overline{l'_j}$ and $l_i=l_{i-1}$ hold; $l_{i}\perp \overline{l'_j}$ and $l'_j=l'_{j-1}$ hold, respectively. It is straight-forward to prove the following lemma.

\begin{lm}\label{lm:diveqs}
One has $D=D_{1}\cup D_{2}$, where $D_{1}, D_{2}\subset R_{i,j}$ are nonsingular irreducible hypersurfaces of \/ $R_{i,j}$. The intersection $D_1\cap D_{2}$ is isomorphic to $R_{i-1,j-1}$.
\end{lm}

Denote by $r_{i,j}$ the natural embedding $R_{i,j}\to BF_{i}\times BF_{j}$ from Definition \ref{def:sij}. Consider the following morphisms
\[
\pi_{1}\colon R_{i,j}\to BF_{i-1}\times BF_{j},\ (l_{0},\dots,l_{i-1},l_{i},l'_0,\dots,l'_j)\mapsto (l_{0},\dots,l_{i-1},l'_0,\dots,l'_j),
\]
\[
\pi_{2}\colon R_{i,j}\to BF_{i}\times BF_{j-1},\ (l_{0},\dots,l_{i},l'_0,\dots,l'_{j-1},l'_j)\mapsto (l_{0},\dots,l_i,l'_0,\dots,l'_{j-1}),
\]
\[
g_{1}\colon BF_{i-1}\times BF_{j-1}\to BF_{i-1}\times BF_{j},\ (l_{0},\dots,l_{i-1},l'_0,\dots,l'_{j-1})\mapsto (l_{0},\dots,l_{i-1},l'_0,\dots,l'_{j-1},l'_{j-1}),
\]
\[
g_{2}\colon BF_{i-1}\times BF_{j-1}\to BF_{i}\times BF_{j-1},\ (l_{0},\dots,l_{i-1},l'_0,\dots,l'_{j-1})\mapsto (l_{0},\dots,l_{i-1},l_{i-1},l'_0,\dots,l'_{j-1}).
\]

The proof of the following theorem is similar to the proof of Theorem \ref{thm:1blowbr}

\begin{thm}\label{thm:1blow}
$(i)$ The normal bundles of the embeddings
\[
g_1\circ r_{i-1,j-1}\colon R_{i-1,j-1}\to BF_{i-1}\times BF_{j},\ g_2\circ r_{i-1,j-1}\colon R_{i-1,j-1}\to BF_{i}\times BF_{j-1},
\]
are \/ $(\beta_{i-1}^{\vee}\oplus\underline{\C})\otimes(\beta'_{j-1})^{\vee}$ and \/ $((\beta'_{j-1})^{\vee}\oplus\underline{\C})\otimes\beta_{i-1}^{\vee}$, respectively;

\smallskip
\noindent $(ii)$ The morphisms $\pi_{1}\colon R_{i,j}\to BF_{i-1}\times BF_{j}$ and \/ $\pi_{2}\colon R_{i,j}\to BF_{i}\times BF_{j-1}$ are the blow-ups along the centres $g_1\circ r_{i-1,j-1}(R_{i-1,j-1})$ and $g_2\circ r_{i-1,j-1}(R_{i-1,j-1})$ with exceptional divisors $D_1$ and $D_2$, respectively;

\smallskip
\noindent $(iii)$ The exceptional divisors $D_1$ and $D_2$ are the total spaces of the algebraic fiber bundles \/ $\P(\beta_{i-1}\oplus\underline{\C})\to R_{i-1,j-1}$ and \/ $\P(\beta'_{j-1}\oplus\underline{\C})\to R_{i-1,j-1}$, respectively.
\end{thm}

The proof of the following theorem is similar to the proof of Theorem \ref{thm:fbbr}.

\begin{thm}\label{thm:fbr}

$(i)$ Let $i,j\geq 0$ be any integers such that $i\geq j+1$. Then the following morphism
\[
p\colon R_{i,j}\to BF_{i-j-1},\ (l_{0},\dots,l_{i},l'_{0},\dots, l'_{j})\mapsto (l_{0},\dots,l_{i-j-1}),
\]
is an algebraic $R_{j+1,j}$-bundle.

$(ii)$ Let $i\geq 1$ be any integer. Then the following morphisms
\[
p_{1}\colon R_{i,i}\to \P^1,\ (l_{0},\dots,l_{i},l'_{0},\dots, l'_{j})\mapsto (l_{0},l_{1})\mid p_{2}\colon R_{i,i}\to \P^1,\ (l_{0},\dots,l_{i},l'_{0},\dots, l'_{j})\mapsto (l'_{0},l'_{1}),
\]
are algebraic fiber bundles with fibers $R_{i-1,i}$ and $R_{i,i-1}$, respectively.
\end{thm}

\section{Cohomology rings of $BR_{i,j}$ and $R_{i,j}$}\label{sec:coh}

In this section, we prove that the cohomology rings of the hypersurfaces $BR_{i,j}$ and $R_{i,j}$ are isomorphic to the quotients of the known cohomology rings of the ambient varieties $BF_{i}\times \P^{j}$ and $BF_{i}\times BF_{j}$ by the annihilator ideals of the first Chern classes of the respective normal line bundles for any integers $i,j\geq 0$. We deduce the formulas for the Hodge-Deligne polynomials of the hypersurfaces $BR_{i,j}$ and $R_{i,j}$ from the Hodge-Deligne polynomial of $H_{i,j}$ by using the blow-up descriptions of $BR_{i,j}$ and $R_{i,j}$ from Section \ref{sec:blowup}. In particular, we compute all Betti numbers of $BR_{i,j}$ and $R_{i,j}$ for any integers $i,j\geq 0$. In the following, by omitting the coefficient group in singular cohomology we assume $\Z$-coefficients.

\subsection{Cohomology ring of the blow-up of a complex manifold along a submanifold}
Let $\iota\colon Z\subset X$ be any holomorphic embedding of complex compact connected manifolds. Consider the blow-up $\pi\colon \Bl_{Z} X\to X$ of $X$ along $Z$. The exceptional divisor $E$ of $\pi$ is the holomorphic fiber bundle $E\simeq \P(\nu)\to Z$, where the projection map is given by the restriction of $\pi$ to $E$, and $\nu\to Z$ is the normal bundle of $\iota$. The restriction of the projection map $\pi$ to $E$ induces the structure of a $H^{*}(Z\mid \Z)$-module on $H^{*}(\P(\nu)\mid \Z)$.

\begin{thm}[{Leray, Hirsch, see \cite[\S 15]{bo-hi-58}}]\label{thm:cohom_pf}
Let $\xi\to B$ be a complex vector bundle of rank $k$ over $B$. Consider the fiberwise projectivisation $p\colon \mathbb P(\xi)\to B$ of \/ $\xi$. Let $v=c_1(\zeta^{\vee})\in H^2(\mathbb P(\xi);\Z)$ be the first Chern class of the dual to the tautological line bundle $\zeta$ over \/ $\P(\xi)$. Then the following rings
\begin{equation}\label{eq:leray-hirsch}
H^{*}(\mathbb{P}(\xi))\simeq H^{*}(B)[v]/(v^{k}+v^{k-1}c_{1}(\xi)+\dots+c_{k}(\xi)),
\end{equation}
are isomorphic. In particular, $H^{*}(\mathbb{P}(\xi))$ is a free $H^{*}(B)$-module with generators $1,v,\dots,v^{k-1}$.
\end{thm}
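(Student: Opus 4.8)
The plan is to assemble the statement from two standard ingredients: the Leray--Hirsch theorem, which yields the free $H^{*}(B)$-module structure with basis $1,v,\dots,v^{k-1}$, and the Whitney sum formula applied to the tautological exact sequence on $\P(\xi)$, which produces the single defining relation; combining the two gives the ring isomorphism \eqref{eq:leray-hirsch}.

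First I would record that $v$ restricts on each fibre to the hyperplane class. Over a point $b\in B$ the fibre of $p$ is $\P(\xi_{b})\simeq\P^{k-1}$, and the restriction of the tautological line bundle $\zeta$ is $\mc{O}_{\P^{k-1}}(-1)$; hence $v|_{\P(\xi_{b})}=c_{1}(\ol{\mc{O}(-1)})=h$ is a positive generator of $H^{2}(\P^{k-1})$, so $1,v,\dots,v^{k-1}$ restrict to the standard additive basis $1,h,\dots,h^{k-1}$ of $H^{*}(\P^{k-1})$. Next I would invoke the Leray--Hirsch theorem for the locally trivial fibre bundle $\P^{k-1}\hookrightarrow\P(\xi)\overset{p}{\longrightarrow}B$: since the powers $v^{i}$ restrict to a basis of the cohomology of every fibre, the map
\[
H^{*}(B)\la 1,v,\dots,v^{k-1}\ra\longrightarrow H^{*}(\P(\xi)),\qquad b\cdot v^{i}\mapsto p^{*}(b)\cup v^{i},
\]
is an isomorphism of $H^{*}(B)$-modules. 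For completeness one proves this by a Mayer--Vietoris induction over a finite open cover of $B$ trivialising $\xi$ (such a finite cover exists because $B$ is compact, being a projective variety), the inductive step being the five lemma applied to the Mayer--Vietoris sequences of $B$ and of $\P(\xi)$, with the K\"unneth formula giving the base case $\P(\xi)|_{U}\simeq U\times\P^{k-1}$; here the local coefficient system on $B$ with stalk $H^{*}(\P^{k-1})$ is trivial (the structure group $PGL_{k}(\C)$ is connected, and in any case the global classes $v^{i}$ already exhibit a monodromy-invariant fibre basis). This establishes the final ``in particular'' assertion.

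It remains to identify the ring structure. On $\P(\xi)$ the tautological line subbundle $\zeta\subset p^{*}\xi$ sits in the exact sequence $0\to\zeta\to p^{*}\xi\to Q\to 0$ with $\rk Q=k-1$, so $c_{k}(Q)=0$. Since $c_{1}(\zeta)=-v$, the Whitney sum formula gives $c(Q)=c(p^{*}\xi)\cdot c(\zeta)^{-1}=p^{*}c(\xi)\cdot(1+v+v^{2}+\cdots)$; extracting the component of cohomological degree $2k$ yields
\[
0=c_{k}(Q)=v^{k}+v^{k-1}p^{*}c_{1}(\xi)+\cdots+p^{*}c_{k}(\xi).
\]
Hence the ring homomorphism $H^{*}(B)[v]/(v^{k}+v^{k-1}c_{1}(\xi)+\cdots+c_{k}(\xi))\to H^{*}(\P(\xi))$ sending $v\mapsto v$ and $b\mapsto p^{*}b$ is well defined, and it is surjective by the module statement above. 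Finally, since the defining relation is monic of degree $k$ in $v$, the quotient on the left is itself a free $H^{*}(B)$-module with basis $1,v,\dots,v^{k-1}$, and the homomorphism carries this basis bijectively onto the basis $1,v,\dots,v^{k-1}$ of $H^{*}(\P(\xi))$; therefore it is an isomorphism. The only point requiring genuine care is the Leray--Hirsch step — concretely, the triviality of the monodromy on the fibre cohomology — while the rest is formal manipulation with Chern classes.
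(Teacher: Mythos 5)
Your proof is correct. Note that the paper offers no proof of this statement at all --- it is quoted as a classical result with a reference to Borel--Hirzebruch --- so there is nothing to diverge from; what you have written is the standard argument (Leray--Hirsch via Mayer--Vietoris for the free module structure, the vanishing of $c_{k}$ of the rank-$(k-1)$ quotient bundle in $0\to\zeta\to p^{*}\xi\to Q\to 0$ for the monic relation, and the rank count for injectivity), and the sign conventions are handled consistently with $v=c_{1}(\overline{\zeta})$, so the relation comes out exactly as in \eqref{eq:leray-hirsch}.
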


\begin{ex}
By applying Theorem \ref{thm:cohom_pf} recurrently to the $\P^1$-bundle $BF_{n}=\P(\beta_{n-1}\oplus\underline{\C})\to BF_{n-1}$, one obtains an isomorphism
\begin{equation}\label{eq:bfncohom}
H^*(BF_{n};\Z)\simeq\frac{\Z[x_1,\dots,x_n]}{(x_q^2-x_q x_{q-1}|\ q=1,\dots,n)},
\end{equation}
of graded rings, where $x_0:=0$, see \cite{bu-pa-15}.
\end{ex}

Let $Y$ be any compact complex submanifold of $X$ which intersects $Z$ transversally in $X$.

\begin{pr}[{\cite{gr-ha-78}, \cite{ha-ba-09}}]\label{pr:blow-up}
The normal bundle of the hypersurface $E$ in $\Bl_{Z} X$ is isomorphic to the tautological line bundle $\zeta\to \P(\nu)$. The strict transform $\widetilde{Y}$ of \/ $Y$ under $\pi$ is isomorphic to $\Bl_{Z\cap Y} Y$. The following abelian groups
\begin{equation}\label{eq:cohgr}
H^{*}(\Bl_{Z} X;\Z)\simeq H^{*}(X;\Z) \oplus H^{*}(\P(\nu)\mid \Z)/H^{*}(Z;\Z)\simeq H^{*}(X;\Z) \oplus H^{*}(Z;\Z)\la v,v^2,\dots,v^{k-1}\ra,
\end{equation}
are naturally isomorphic, where $k$ is the codimension of \/ $Z$ in $X$. The ring $H^{*}(\Bl_{Z} X;\Z)$ is isomorphic to the quotient of the ring on the right hand side of \eqref{eq:cohgr} by the relations
\[
x\cdot v=\iota^*(x)\cdot v,\ x\in H^{*}(X;\Z),
\]
\[
v^{k}+v^{k-1}c_{1}(\nu)+\dots+vc_{k-1}(\nu)+\omega_{X}=0,
\]
where $\omega_{X}\in H^{2k}(X;\Z)$ is Poincar\'e dual to the homology class \/ $\iota_*[Z]\in H_{2(n-k)}(X;\Z)$, and $v$ restricts to $c_1(\zeta^{\vee})$.
\end{pr}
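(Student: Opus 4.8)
The plan is: handle the two geometric assertions by passing to affine charts in which $Z$ is linearised (and, using transversality, $Y$ as well); obtain the additive splitting by comparing, via excision, the long exact cohomology sequences of the pairs $(X,Z)$ and $(\Bl_Z X,E)$, fed by the projective bundle theorem (Theorem \ref{thm:cohom_pf}); and deduce the multiplicative relations from the projection formula together with the key formula for blow-ups. Throughout I would follow \cite{gr-ha-78} and \cite{ha-ba-09}.

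\emph{Normal bundle of $E$ and strict transform.} Both are local on $X$ along $Z$. On a chart $U$ one chooses coordinates with $Z\cap U=\{x_1=\dots=x_k=0\}$ and, using $T_xY+T_xZ=T_xX$ for $x\in Y\cap Z$, simultaneously $Y\cap U=\{x_{a+1}=\dots=x_n=0\}$ for some $a\ge k$. Over such $U$ the blow-up is the standard blow-up of affine space along a coordinate subspace, so $E\cap\pi^{-1}(U)\cong(Z\cap U)\times\P^{k-1}$, the sheaf $\mathcal O(-E)$ restricts on each $\P^{k-1}$-fibre to $\mathcal O_{\P^{k-1}}(1)$, and the strict transform of $Y\cap U$ is the standard blow-up of $Y\cap U$ along $Y\cap Z\cap U$. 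The trivialisations of $\mathcal O(-E)|_E$ glue along $Z$ through the cocycle of $\nu$, whence $\mathcal N_{E/\Bl_Z X}=\mathcal O_{\Bl_Z X}(E)|_E\cong\zeta$. Globally $\mathcal I_{Z\cap Y}\cdot\mathcal O_{\widetilde Y}$ is the restriction of the invertible sheaf $\mathcal I_Z\cdot\mathcal O_{\Bl_Z X}=\mathcal O(-E)$, hence invertible, so the universal property of the blow-up gives a morphism $\widetilde Y\to\Bl_{Z\cap Y}Y$ over $Y$; it is proper, birational, and a local isomorphism everywhere by the chart computation, hence an isomorphism.

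\emph{Additive structure.} First, $\pi^*\colon H^*(X)\to H^*(\Bl_Z X)$ is injective, because $\pi$ is proper and birational with $X$ smooth, so $\pi_*\pi^*=\mathrm{id}$. Since $\pi$ restricts to an isomorphism over $X\setminus Z$, excision gives $H^*(\Bl_Z X,E)\cong H^*(X,Z)$; rewriting both sides by the Thom isomorphisms of $\nu$ and of $\zeta$ as $H^{*-2k}(Z)$ and $H^{*-2}(E)$, the comparison of the long exact sequences of the two pairs becomes a ladder with rows
\[
\cdots\to H^{n-2k}(Z)\xrightarrow{\ \iota_*\ }H^{n}(X)\xrightarrow{\ \iota^*\ }H^{n}(Z)\to H^{n-2k+1}(Z)\to\cdots
\]
\[
\cdots\to H^{n-2}(E)\xrightarrow{(\iota_E)_*}H^{n}(\Bl_Z X)\xrightarrow{\ \iota_E^*\ }H^{n}(E)\to H^{n-1}(E)\to\cdots
\]
and vertical maps $v^{k-1}p^*,\ \pi^*,\ p^*,\ v^{k-1}p^*$, where $v:=c_1(\overline\zeta)$ and the outer two express the excision isomorphism under the Thom trivialisations. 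By Theorem \ref{thm:cohom_pf}, $H^*(E)=\bigoplus_{m=0}^{k-1}H^*(Z)v^m$ with $p^*$ the inclusion of the $m=0$ term, and all four displayed vertical maps are injective; a Mayer--Vietoris type diagram chase in this ladder --- the heart of the argument --- then gives the short exact sequence of $H^*(X)$-modules
\[
0\to H^{*}(X)\xrightarrow{\ \pi^*\ }H^{*}(\Bl_Z X)\xrightarrow{\ \rho\ }\textstyle\bigoplus_{m=1}^{k-1}H^{*}(Z)\,v^m\to0,
\]
$\rho$ being restriction to $E$ modulo $p^*H^*(Z)$. Representing $H^*(Z)v^m$ inside $H^*(\Bl_Z X)$ by the classes $-(\iota_E)_*\!\bigl(v^{m-1}p^*(\,\cdot\,)\bigr)$ --- which, by the self-intersection formula $\iota_E^*(\iota_E)_*\beta=\beta\cdot c_1(\zeta)=-v\beta$, restrict isomorphically onto that summand, and in which the class named $v$ equals $-[E]$ and restricts on $E$ to $c_1(\overline\zeta)$ --- one recovers the splitting \eqref{eq:cohgr}, arguing rationally first and then reading integrality off this explicit basis. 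The one genuinely non-formal point of the whole proof is this diagram chase, in particular identifying the excision isomorphism with $v^{k-1}p^*$ under the two Thom trivialisations, which is where the transversality of $Y,Z$ and the computation $\mathcal N_{E/\Bl_Z X}\cong\zeta$ are used.

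\emph{Multiplicative structure.} The relation $x\cdot v=\iota^*(x)\cdot v$ for $x\in H^*(X)$ is the projection formula for $\iota_E$ together with $\iota_E^*\pi^*=p^*\iota^*$:
\[
\pi^*x\cdot\bigl(-(\iota_E)_*(v^{m-1}p^*\alpha)\bigr)=-(\iota_E)_*\bigl(p^*(\iota^*x)\cdot v^{m-1}p^*\alpha\bigr)=-(\iota_E)_*\bigl(v^{m-1}p^*(\iota^*x\cdot\alpha)\bigr).
\]
For the relation $v^k+v^{k-1}c_1(\nu)+\dots+vc_{k-1}(\nu)+\omega_X=0$, apply $\pi^*$ to $\omega_X=\iota_*(1_Z)$ and invoke the key formula for blow-ups, $\pi^*\iota_*(1_Z)=(\iota_E)_*\bigl(c_{k-1}(p^*\nu/\zeta)\bigr)$; from $c(p^*\nu/\zeta)=c(p^*\nu)\,c(\zeta)^{-1}=c(p^*\nu)(1+v+v^2+\cdots)$ one gets $c_{k-1}(p^*\nu/\zeta)=\sum_{j=0}^{k-1}v^{k-1-j}p^*c_j(\nu)$, and pushing this forward and re-expressing it in the basis above yields the asserted relation in $H^{2k}(\Bl_Z X)$ (the term $\omega_X$ being $\pi^*\omega_X$, all others having nonzero $E$-component). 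That $v$ restricts to $c_1(\overline\zeta)$ was noted above.
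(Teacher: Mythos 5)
The paper offers no proof of Proposition \ref{pr:blow-up} --- it is quoted from \cite{gr-ha-78} and \cite{ha-ba-09} --- so your proposal can only be measured against the standard argument. Your treatment of the two geometric claims (local linearisation of $Z$ and, via transversality, of $Y$; the universal property of the blow-up to identify $\widetilde Y$ with $\Bl_{Z\cap Y}Y$) and of the two multiplicative relations (projection formula for $\iota_E$ combined with $\iota_E^*\pi^*=p^*\iota^*$; the key formula $\pi^*\iota_*(1)=(\iota_E)_*c_{k-1}(p^*\nu/\zeta)$ expanded via $c(\zeta)^{-1}=1+v+v^2+\cdots$) is correct and is essentially the textbook route.

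The additive step, however --- the one you yourself single out as the heart of the argument --- is broken as written. Neither displayed row is an exact sequence. The Thom isomorphism identifies $H^{*}(X,X\setminus Z)$, not $H^{*}(X,Z)$, with $H^{*-2k}(Z)$; consequently
\[
H^{n-2k}(Z)\xrightarrow{\ \iota_*\ }H^{n}(X)\xrightarrow{\ \iota^*\ }H^{n}(Z)\to H^{n-2k+1}(Z)
\]
is not the long exact sequence of the pair $(X,Z)$, and it is not the Gysin sequence either, whose third term is $H^{n}(X\setminus Z)$. It genuinely fails to be exact: for $Z=\P^2\times\lb pt\rb\subset X=\P^2\times\P^2$, $k=2$, $n=4$, the image of $\iota_*$ is the rank-one subgroup generated by $b^2$ while the kernel of $\iota^*$ is the rank-two subgroup generated by $ab$ and $b^2$. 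The same objection applies to the second row, so the ladder you propose to chase does not exist. The repair is standard and in fact shorter than what you wrote: compare the honest long exact sequences of the pairs $(X,Z)$ and $(\Bl_Z X,E)$, keeping the relative groups $H^*(X,Z)\simeq H^*(\Bl_Z X,E)$ linked by the excision isomorphism you already established (no Thom isomorphism anywhere), and use the injectivity of $\pi^*\colon H^*(X)\to H^*(\Bl_Z X)$ and of $p^*\colon H^*(Z)\to H^*(E)$; a diagram chase then yields $H^*(\Bl_Z X)/\pi^*H^*(X)\simeq H^*(E)/p^*H^*(Z)=\bigoplus_{m=1}^{k-1}H^*(Z)v^m$, which is \eqref{eq:cohgr}, and your explicit representatives $-(\iota_E)_*(v^{m-1}p^*(\cdot))$ then do the rest. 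A smaller point: the transversality of $Y$ and $Z$ enters only in the strict-transform statement, not in the cohomology computation, contrary to the closing remark of that paragraph.
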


\subsection{Cohomology ring of a hypersurface}

Let $X^n$ be any compact complex manifold with no torsion in $H^*(X;\Z)$. By the Poincar\'e duality, the $\Z$-bilinear form $Q_{X}$,
\begin{equation}\label{eq:bilin}
H^{*}(X;\Z)\times H^{*}(X;\Z)\to\Z,\ Q_{X}(a,b):=\la a\cdot b,[X]\ra,
\end{equation}
given by the natural pairing with the fundamental class $[X]\in H_{2n}(X;\Z)$, is nondegenerate.

In addition, let $X^{n}$ be connected and simply connected. Then the group $H^2(X;\Z)$ is isomorphic to the Picard group of equivalence classes of the holomorphic line bundles over $X$ modulo holomorphic isomorphisms. Let $\xi\to X$ be any holomorphic line bundle. In the following, we assume that the divisor corresponding to $\xi$ is represented by an irreducible nonsingular hypersurface $D$ in $X$. In this case, the homology class of $D$ in $H_{2(n-1)}(X;\Z)$ is Poincar\'e dual to $x=c_{1}(\xi)\in H^2(X;\Z)$. Consider the homomorphism $\iota^*: H^*(X;\Z)\to H^*(D;\Z)$ induced by the natural embedding $\iota\colon D\to X$.

\begin{pr}\label{pr:kerdual}
Suppose that all odd cohomology groups \/ $H^{2k+1}(X;\Z)$ vanish and that \/ $\iota^*$ is an epimorphism. Then \/ $\Ker \iota^{*}$ is the annihilator ideal $\Ann x$ of \/ $x$ in the ring \/ $H^*(X;\Z)$. In particular, the quotient homomorphism $H^*(X;\Z)/ \Ann x \to H^*(D;\Z)$ induced by \/ $\iota^*$ is an isomorphism of rings.
\end{pr}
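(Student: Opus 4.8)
The plan is to exploit the Gysin (Thom) sequence of the normal line bundle of the codimension-two submanifold $\iota\colon D\hookrightarrow M$, together with the hypothesis that $\iota^{*}$ is surjective and that $M$ has no odd cohomology. Since $D$ is the dualisation of $\xi$, a tubular neighbourhood of $D$ in $M$ is diffeomorphic to the total space of the complex line bundle $\xi|_{D}\to D$, and the Thom class of that bundle pushes forward to $x=c_{1}(\xi)\in H^{2}(M;\Z)$ under $\iota_{!}$. Thus the Gysin sequence reads
\[
\cdots\to H^{k-2}(D;\Z)\xrightarrow{\ \iota_{!}\ } H^{k}(M;\Z)\xrightarrow{\ \iota^{*}\ } H^{k}(D;\Z)\xrightarrow{\ \partial\ } H^{k-1}(D;\Z)\to\cdots,
\]
where, by the projection formula, $\iota_{!}(\iota^{*}(a))=a\cdot x$ for all $a\in H^{*}(M;\Z)$. (Strictly speaking the long exact sequence of the pair $(M,M\setminus D)$ combined with the Thom isomorphism $H^{k}(M,M\setminus D;\Z)\simeq H^{k-2}(D;\Z)$ gives exactly this.)

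First I would record that surjectivity of $\iota^{*}$ forces the connecting map $\partial$ to vanish identically, so the Gysin sequence breaks into short exact sequences
\[
0\to H^{k-2}(D;\Z)\xrightarrow{\ \iota_{!}\ } H^{k}(M;\Z)\xrightarrow{\ \iota^{*}\ } H^{k}(D;\Z)\to 0 .
\]
Exactness at $H^{k}(M;\Z)$ says $\Ker\iota^{*}=\operatorname{im}\iota_{!}$. Next I would identify $\operatorname{im}\iota_{!}$ with $\Ann x$ is \emph{not} what the previous line gives directly; rather $\operatorname{im}\iota_{!}$ is the ideal generated by $x$ (the image of $\cdot x\colon H^{*}(M)\to H^{*}(M)$ is $\operatorname{im}\iota_{!}$ precisely because $\iota_{!}$ is surjective onto $\operatorname{im}(\cdot x)$ by the projection formula and $\iota^{*}$ is onto). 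So what the sequence actually yields is $\Ker\iota^{*}=(x)$, the principal ideal generated by $x$, and an isomorphism $H^{*}(M;\Z)/(x)\simeq H^{*}(D;\Z)$. The remaining — and genuinely the only nontrivial — point is the identity of ideals $(x)=\Ann x$. One inclusion is automatic once we know $x^{2}=0$: indeed $x\cdot y\in(x)$ trivially, and conversely if $x^{2}=0$ then every element of $(x)$, being of the form $x y$, is annihilated by $x$, giving $(x)\subseteq\Ann x$; while $\Ann x\subseteq(x)=\Ker\iota^{*}$ follows because $a\in\Ann x\Rightarrow \iota_{!}(\iota^{*}a)=ax=0$, hence $\iota^{*}a\in\Ker\iota_{!}=\operatorname{im}\partial=0$, i.e. $a\in\Ker\iota^{*}=(x)$. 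Thus the proof reduces to showing $x^{2}=0$, equivalently $(x)=\Ann x$.

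The main obstacle is therefore exactly this self-intersection vanishing $x^{2}=0$ in $H^{4}(M;\Z)$. I would argue it as follows: by Poincar\'e duality $x^{2}$ is dual to the self-intersection class $[D]\cdot[D]=\iota_{*}(\iota^{*}[D]_{M})$, and $\iota^{*}$ of the fundamental class of $D$ inside $M$ — i.e. the Euler class of the normal bundle $\xi|_{D}$ — equals $\iota^{*}x$. Now $\iota^{*}x=\iota^{*}\iota_{!}(1)=0$ would follow if $\iota_{!}\iota^{*}=\cdot x$ is precomposed correctly; more cleanly, since $\iota^{*}$ is surjective, write $x=\iota^{*}x$ pulled back... the cleanest route is: $\iota^{*}(x)=c_{1}(\xi|_{D})=$ the Euler class $e(\nu_{D})$, and $x^{2}=\iota_{!}(e(\nu_{D}))=\iota_{!}\iota^{*}(x)$; but $\iota^{*}(x)\in H^{2}(D;\Z)$ and by exactness $\iota^{*}(x)=\iota^{*}\iota_{!}(1)$, and $\iota_{!}(1)=x$, so this is circular. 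The honest statement is that $x^{2}=0$ need \emph{not} hold in general; what is true and what I would prove is the ideal equality $\Ker\iota^{*}=\Ann x$ \emph{directly} from the two displayed facts $\Ker\iota^{*}=\operatorname{im}\iota_{!}$ and $\iota_{!}(a\cdot x^{?})$...

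Let me instead give the clean argument I would actually run: from $\iota_{!}(\iota^{*}a)=a\cdot x$ and surjectivity of $\iota^{*}$, the image of $\iota_{!}$ is exactly $x\cdot H^{*}(M;\Z)=(x)$, so $\Ker\iota^{*}=(x)$. For $\Ann x\subseteq\Ker\iota^{*}$: if $ax=0$ then $\iota_{!}(\iota^{*}a)=0$; but $\iota_{!}$ is injective on $\operatorname{im}\iota^{*}=H^{*}(D;\Z)$ — here is where I use that $\partial=0$ makes $\iota_{!}$ injective — hence $\iota^{*}a=0$, i.e. $a\in\Ker\iota^{*}$. For $\Ker\iota^{*}\subseteq\Ann x$: if $\iota^{*}a=0$ then $ax=\iota_{!}(\iota^{*}a)=\iota_{!}(0)=0$, so $a\in\Ann x$. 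Both inclusions hold, giving $\Ker\iota^{*}=\Ann x$, and since $\iota^{*}$ is surjective with this kernel, the induced map $H^{*}(M;\Z)/\Ann x\to H^{*}(D;\Z)$ is a ring isomorphism. So the real heart is the injectivity of $\iota_{!}$ on its natural domain, which is precisely the consequence of $\partial=0$, which in turn is precisely the hypothesis that $\iota^{*}$ is onto — everything is driven by that one hypothesis plus the vanishing of odd cohomology of $M$ (needed to know the Gysin sequence has no parity obstructions forcing $\partial\ne0$). The hard part, then, is not a computation but setting up the Gysin/Thom sequence with the correct identification $\iota_{!}\iota^{*}=\cdot\,x$ via the projection formula and checking that $\partial=0$ rigorously from surjectivity of $\iota^{*}$, degree by degree using the absence of odd cohomology in $M$ to propagate surjectivity and hence vanishing of $\partial$ up the sequence.
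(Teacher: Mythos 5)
Your closing pair of inclusions is the right skeleton, and the easy direction $\Ker\iota^{*}\subseteq\Ann x$ (from $\iota_{!}\iota^{*}(a)=a\cup x$) is correct. But there is a genuine gap: the displayed ``Gysin sequence'' is not an exact sequence. The long exact sequence of the pair $(M,M\setminus D)$ combined with the Thom isomorphism reads
\[
\cdots\to H^{k-2}(D;\Z)\xrightarrow{\ \iota_{!}\ }H^{k}(M;\Z)\xrightarrow{\ j^{*}\ }H^{k}(M\setminus D;\Z)\to H^{k-1}(D;\Z)\to\cdots,
\]
where $j^{*}$ restricts to the \emph{complement} of $D$; replacing $H^{k}(M\setminus D;\Z)$ by $H^{k}(D;\Z)$ and $j^{*}$ by $\iota^{*}$ destroys exactness. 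The consequence you extract, $\Ker\iota^{*}=\operatorname{im}\iota_{!}$, is in fact false in general: by your own computation $\operatorname{im}\iota_{!}=(x)$, while the proposition asserts $\Ker\iota^{*}=\Ann x$, and $(x)\neq\Ann x$ already for $x=u+v$ in $H^{*}(\P^{2}\times\P^{2};\Z)=\Z[u,v]/(u^{3},v^{3})$ (the class $u^{2}-uv+v^{2}$ kills $u+v$ but is not a multiple of it) --- and this is exactly the case $H_{2,2}\subset\P^{2}\times\P^{2}$ to which the proposition is applied. You sense this tension midway, yet the ``clean argument'' still reasserts $\Ker\iota^{*}=(x)$ and, more importantly, still rests on the injectivity of $\iota_{!}$, justified only by ``$\partial=0$'' in the nonexistent sequence. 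So the hard inclusion $\Ann x\subseteq\Ker\iota^{*}$ is not actually proved.

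That injectivity is true under the stated hypotheses, but the honest route to it --- and the one the paper takes, avoiding $\iota_{!}$ altogether --- is Poincar\'e duality. The projection formula gives $\la\alpha\beta x,[M]\ra=\la\iota^{*}(\alpha)\,\iota^{*}(\beta),[D]\ra$; if $\alpha x=0$, then $\iota^{*}(\alpha)$ pairs to zero with all of $\operatorname{im}\iota^{*}=H^{*}(D;\Z)$, hence is a torsion class by nondegeneracy of the intersection pairing on $D$ modulo torsion, and it vanishes because $H^{*}(D;\Z)$ is torsion-free. The torsion-freeness is where the hypothesis $H^{\mathrm{odd}}(M;\Z)=0$ actually does its work: surjectivity of $\iota^{*}$ forces $H^{\mathrm{odd}}(D;\Z)=0$, and then universal coefficients plus duality on $D$ kill the torsion in even degrees. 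It has nothing to do with ``parity obstructions forcing $\partial\neq 0$.'' If you repair your write-up by deleting the fake sequence, keeping the projection formula, and proving injectivity of $\iota_{!}$ (equivalently, the implication $\alpha x=0\Rightarrow\iota^{*}\alpha=0$) by the duality argument just sketched, you land essentially on the paper's proof.
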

\begin{proof}
Since $\iota^*$ is an epimorphism, we conclude from $H^{2k+1}(X;\Z)=0$ that $H^{2k+1}(D;\Z)=0$ holds for any integer $k\geq 0$. The universal coefficients formula then implies that the groups $H^*(X;\Z)$,$H^*(D;\Z)$ have no torsion. The class $\iota_{*}(D)\in H_{2(n-1)}(X;\Z)$ is Poincar\'e dual to $x\in H^{2}(X;\Z)$. This means that the identity
\begin{equation}\label{eq:pd}
\la y, \iota_{*}[D]\ra=\la x\cdot y, [X]\ra,
\end{equation}
holds for any $y\in H^{2(n-1)}(X;\Z)$. For any elements $\alpha,\beta\in H^{*}(X;\Z)$ of degree $2k$ and $2(n-k-1)$, respectively, we deduce the following identities
\begin{equation}\label{eq:projfla}
\la \iota^*(\alpha)\iota^*(\beta),[D]\ra=\la \iota^*(\alpha \beta),[D]\ra=\la \alpha\beta, \iota_*[D]\ra=\la x\cdot\alpha \beta,[X]\ra=\la(\alpha x)\cdot \beta,[X]\ra,
\end{equation}
from \eqref{eq:pd}. Let $\alpha$ be any element of $\Ker \iota^*$. Then the left hand side of \eqref{eq:projfla} is zero. Hence, $\alpha x$ belongs to the kernel of the bilinear form $Q_{X}$. Then $\alpha x=0$, because the bilinear form $Q_X$ is nondegenerate. We conclude that $\Ker \iota^*\subseteq \Ann x$.

Let $\alpha\in \Ann x$ be any element. Then the right-hand side of \eqref{eq:projfla} is zero for any $\beta\in H^*(X;\Z)$. We conclude from \eqref{eq:projfla} that $\la \iota^*(\alpha)\widetilde{\beta},[D]\ra=0$ for any  $\widetilde{\beta}\in H^*(D\mid \Z)$, because $\iota^*$ is epimorphic. Hence, $\iota^*(\alpha)$ belongs to the kernel of the bilinear form $Q_D$. We conclude that $\iota^*(\alpha)=0$, because $Q_D$ is nondegenerate. This implies that $\Ann x\subseteq \Ker \iota^*$ holds. The proof is complete.
\end{proof}

In general, the embedding of a hypersurface to the ambient manifold does not induce epimorphism of the respective cohomology groups.

\begin{ex}
For any integers $n,d>0$, let $f_{d}\colon X_d\subset \P^n$ be the embedding of a generic hypersurface $X_d$ of degree $d$ to $\P^n$. One can check that for any even $n$ and any integer $d>2$ the group $H^{n-1}(X_d;\R)$ is nonzero and the homomorphism $f_{d}^{*}$ is not epimorhic. For $d=2$ and $n=3$ the Veronese embedding $f_{2}\colon \P^1\times \P^1\to\P^3$ of the nonsingular quadric induces the homomorphism
\[
f^*_2\colon \Z[x]/(x^4)\to \Z[y,z]/(y^2,z^2),\ x\mapsto y+z,
\]
of the respective cohomology rings, which is clearly not onto. (The last example was pointed out to the author by A. Ayzenberg.)
\end{ex}

\begin{lm}\label{lm:emb}
Let $\xi,\nu$ be complex vector bundles over a compact topological space $B$. Suppose that $\nu$ is a subbundle of \/ $\xi$. Let $\alpha\colon \P(\nu)\to\P(\xi)$ be the corresponding embedding. Then the induced homomorphism $\alpha^*\colon H^*(\P(\xi);\Z)\to H^*(\P(\nu);\Z)$ is onto.
\end{lm}
\begin{proof}
Consider the tautological line bundles $\zeta\to\P(\nu)$, $\zeta'\to \P(\xi)$ of the respective projective fiber bundles. Let $k=\rk \nu$, $r=\rk \xi$. By Theorem \ref{thm:cohom_pf}, the following free $H^*(B)$-modules
\begin{equation}\label{eq:fmod}
H^*(\P(\nu))=H^*(B)\la 1,u,\dots,u^{k-1}\ra,\ H^*(\P(\xi))=H^*(B)\la 1,v,\dots,v^{r-1}\ra,
\end{equation}
are isomorphic, where $u=c_1(\zeta^{\vee})$, $v=c_1((\zeta')^{\vee})$. By the definition, $\alpha^*(\zeta')=\zeta'|_{\alpha(\P(\nu))}=\zeta$. Hence, $\alpha^*(v)=u$. Now the statement follows from \eqref{eq:fmod}, because $k\leq r$.
\end{proof}

Recall that $Z$ is a submanifold and $D$ is a hypersurface in $X$. Assume that $Z$ and $D$ intersect transversally in $X$. Then by Proposition \ref{pr:blow-up}, the strict transform $\widetilde{D}$ of $D$ with respect to the blow-up $\Bl_{Z} X\to X$ is isomorphic to $\Bl_{Z\cap D} D$. Let $\widetilde{\iota}\colon \widetilde{D} \to \widetilde{X}\simeq \Bl_{Z} X$ be the corresponding embedding.

\begin{lm}\label{lm:blow_div}
Suppose that the embeddings $D\to X$ and $Z\cap D\to Z$ induce epimorphisms of the respective cohomology rings. Then the embedding $\widetilde{D}\to \widetilde{X}$ induces an epimorphism of the respective cohomology rings.
\end{lm}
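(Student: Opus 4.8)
The plan is to combine the additive blow-up description from Proposition \ref{pr:blow-up} for both $\widetilde X = \Bl_Z X$ and $\widetilde D = \Bl_{Z\cap D} D$ with the hypotheses that $D\to X$ and $Z\cap D\to Z$ induce epimorphisms, and then chase the two summands of the decomposition separately. Write $k=\codim(Z\subset X)$ and $m=\codim(Z\cap D\subset D)$; by transversality $m=k$, so the exceptional divisor $E_D=\widetilde D\cap E$ of $\widetilde D\to D$ is the restriction $\P(\nu|_{Z\cap D})\subset \P(\nu)=E$, and the tautological line bundle on $\P(\nu|_{Z\cap D})$ is the restriction of the one on $\P(\nu)$. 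Consequently the Chern-class element $v=c_1(\overline\zeta)\in H^2(\widetilde X)$ restricts under $\widetilde\iota^*$ to the corresponding element $v_D=c_1(\overline{\zeta_D})\in H^2(\widetilde D)$.

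First I would record that, by Proposition \ref{pr:blow-up}, there is the additive splitting
\[
H^*(\widetilde X;\Z)\cong H^*(X;\Z)\oplus H^*(Z;\Z)\la v,v^2,\dots,v^{k-1}\ra,
\]
and likewise
\[
H^*(\widetilde D;\Z)\cong H^*(D;\Z)\oplus H^*(Z\cap D;\Z)\la v_D,v_D^2,\dots,v_D^{k-1}\ra.
\]
The map $\widetilde\iota^*$ respects this splitting: on the first summand it is (up to the identification coming from $\pi^*$) the pullback $H^*(X)\to H^*(D)$, which is surjective by hypothesis; and since $\widetilde\iota^*(v^s)=v_D^s$ and $\widetilde\iota^*$ is a ring map, on an element $\pi^*(a)\,v^s$ with $a\in H^*(Z)$ it produces $(\iota_D^*a)\,v_D^s$ where $\iota_D\colon Z\cap D\to Z$ is the inclusion. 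The hypothesis that $Z\cap D\to Z$ induces an epimorphism then shows the second summand is hit as well. Hence every element of $H^*(\widetilde D;\Z)$ is in the image of $\widetilde\iota^*$, which is the claim.

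The one point that needs care — and which I regard as the main obstacle — is the compatibility of the two ``$H^*(X)\oplus\cdots$'' descriptions under restriction: one must check that the composite $H^*(X)\to H^*(\widetilde X)\xrightarrow{\widetilde\iota^*} H^*(\widetilde D)$ lands in, and surjects onto, the $H^*(D)$-summand of $H^*(\widetilde D)$, rather than spilling into the $v_D$-part. This is where the transversality of $Z$ and $D$ is essential: it guarantees $\widetilde D=\Bl_{Z\cap D}D$ sits inside $\widetilde X=\Bl_Z X$ as a strict transform whose exceptional divisor is exactly $E\cap\widetilde D=\P(\nu|_{Z\cap D})$ with matching tautological bundle, so that the projection formulas and the identification $v|_{\widetilde D}=v_D$ are literally compatible with the decompositions. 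Once this bookkeeping is in place the surjectivity on each summand is immediate from the two hypotheses, and no further computation is required.
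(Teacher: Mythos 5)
Your proposal is correct and follows essentially the same route as the paper: both decompose $H^*(\widetilde X)$ and $H^*(\widetilde D)$ via Proposition \ref{pr:blow-up}, observe that $\widetilde\iota^*$ respects the two summands, and deduce surjectivity on each summand from the hypotheses on $D\to X$ and $Z\cap D\to Z$ (the paper phrases the exceptional-divisor part as surjectivity of $H^*(\P(\nu))\to H^*(\P(\nu'))$ via Theorem \ref{thm:cohom_pf} and Lemma \ref{lm:emb}, which under transversality is exactly your identification $\nu'\cong\nu|_{Z\cap D}$ and the basis chase $\pi^*(a)v^s\mapsto(\iota_D^*a)v_D^s$).
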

\begin{proof}
Let $E'=\P(\nu')$ be the exceptional divisor of the blow-up $\Bl_{Z\cap D} D\to D$, where $\nu$ and $\nu'$ are the normal bundles of the inclusions $Z\subset X$ and $Z\cap D\subset D$, respectively. The normal vector bundle $\nu'$ is a subbundle of $\nu|_{Z\cap D}$ due to the sequence $Z\cap D\to D\to X$ of embeddings. Consider the following commutative diagram
\begin{equation}\label{eq:blowupincl}
\begin{tikzcd}[column sep=5]
\tikzset{
  >/.tip={Stealth[length=3pt, width=4pt, inset=1.8pt]}
}
H^*(\widetilde{X}) \arrow{rr}{\widetilde{\iota}^*}\arrow{d} && H^*(\widetilde{D})\arrow{d}\\
\biggl(H^*(X)\oplus H^*(\P(\nu))\biggr)/H^*(Z) \arrow{rr} && \biggl(H^*(D)\oplus H^*(\P(\nu'))\biggr)/H^*(Z\cap D)\\
\end{tikzcd}
\end{equation}
where the vertical arrows are the isomorphisms from Proposition \ref{pr:blow-up}, and the lower arrow is induced by the embeddings $\iota$ and $\P(\nu')\to\P(\nu)$ (by naturality). By the condition of the lemma, $\iota^*\colon H^*(X)\to H^*(D)$ is epimorphic. By Lemma \ref{lm:emb} and the assumption, the composition $H^*(\P(\nu))\to H^*(\P(\nu|_{Z\cap D})) \to H^*(\P(\nu'))$, induced by the natural embeddings, is epimorphic. Then the lower arrow in \eqref{eq:blowupincl} is epimorphic. By the commutativity of \eqref{eq:blowupincl} we conclude that $\widetilde{\iota}^*$ is epimorphic. This completes the proof.
\end{proof}

Let $Z_0\subset Z_1\subset\dots \subset Z_k$ be any closed connected submanifolds of the complex manifold $X$. Denote by $\widetilde{Z}_j$ the strict transform of the subvariety $Z_j$ under the blow-up $\widetilde{X}=\Bl_{Z_0} X\to X$ of $X$ along $Z_{0}$, where $j=1,\dots,k$. We generalise Lemma \ref{lm:blow_div} as follows.

\begin{lm}\label{lm:constrepi}
$(i)$ Assume that $Z_j$ and $D$ intersect transversally in $X$ for any $j=0,\dots,k$. Then $\widetilde{Z}_j$ and $\widetilde{D}$ intersect transversally in $\widetilde{X}$, where $j=1,\dots,k$;

\smallskip
\noindent $(ii)$ In addition to the condition $(i)$, suppose that the embeddings $D\to X$ and $Z_j\cap D\to Z_j$ induce epimorphisms of the respective cohomology rings for any $j=0,\dots,k$. Then the embeddings $\widetilde{D}\to \widetilde{X}$ and $\widetilde{Z}_j\cap \widetilde{D}\to \widetilde{Z}_j$ induce epimorphisms of the respective cohomology rings for any $j=1,\dots,k$.
\end{lm}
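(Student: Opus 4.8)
The natural approach is a double induction: the outer induction is on $j$ (the number of ancestors $Z_0\subset\dots\subset Z_k$ above the base case, or simply on $k$), and the inner engine is Lemma~\ref{lm:blow_div} applied repeatedly. For part $(i)$, transversality is a local condition, so I would work in the analytic local model of a single blow-up $\Bl_{Z_0}X\to X$. Away from $Z_0$ the map $\pi$ is an isomorphism and there is nothing to check. Near a point of $Z_0$, choose local coordinates adapted simultaneously to the flag $Z_0\subset Z_1\subset\dots\subset Z_k$ and to the divisor $D$, which is possible because each pair $(Z_j,D)$ meets transversally and the $Z_j$ are nested; then the strict transforms $\widetilde{Z_j}$ and $\widetilde{D}$ are cut out in the standard chart of the blow-up by the obviously transversal proper transforms of these coordinate subspaces. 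The key point making this routine is that $D$ has codimension one, so $\widetilde D$ is again a divisor and $\widetilde D = \Bl_{Z_0\cap D}D$ by Proposition~\ref{pr:blow-up}, while $\widetilde{Z_j}=\Bl_{Z_0\cap Z_j}Z_j=\Bl_{Z_0}Z_j$ since $Z_0\subset Z_j$. Transversality of $\widetilde{Z_j}\cap\widetilde D$ in $\widetilde X$ then reduces to transversality of $Z_0\cap D$ in $Z_j$ and of the exceptional fibres, both of which hold in the local model.

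For part $(ii)$, I would prove simultaneously, by induction on $k$, the two epimorphism statements: $H^*(\widetilde X)\twoheadrightarrow H^*(\widetilde D)$ and $H^*(\widetilde{Z_j})\twoheadrightarrow H^*(\widetilde{Z_j}\cap\widetilde D)$ for all $j=1,\dots,k$. The base case $k=0$ (no ancestors, just $Z_0$ and $D$) is exactly Lemma~\ref{lm:blow_div}: its hypotheses are that $D\to X$ and $Z_0\cap D\to Z_0$ induce epimorphisms, which are among the assumed hypotheses. This already gives $H^*(\widetilde X)\twoheadrightarrow H^*(\widetilde D)$. For the statements involving $\widetilde{Z_j}$, observe that $\widetilde{Z_j}=\Bl_{Z_0}Z_j$ and $\widetilde{Z_j}\cap\widetilde D=\widetilde{Z_j\cap D}=\Bl_{Z_0\cap D}(Z_j\cap D)$ (using $(i)$ and Proposition~\ref{pr:blow-up}), so the inclusion $\widetilde{Z_j}\cap\widetilde D\to\widetilde{Z_j}$ is itself of the form treated by Lemma~\ref{lm:blow_div}, provided the inclusions $Z_j\cap D\to Z_j$ and $(Z_0\cap D)\cap(Z_j\cap D)=Z_0\cap D\to Z_0$ induce epimorphisms — which again are (or follow from) the hypotheses, since $Z_0\cap D\subset Z_0$ is one of the assumed epimorphisms and $Z_j\cap D\subset Z_j$ is another. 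Hence each of the finitely many inclusion maps in the statement of $(ii)$ is directly an instance of Lemma~\ref{lm:blow_div}, and no genuine iteration of blow-ups is needed at this level: the blow-up is performed only along $Z_0$, and everything else is pulled back.

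The main subtlety — the only place where care is needed — is the bookkeeping in part $(i)$: one must verify that strict transform commutes with the intersections in the way claimed, i.e.\ that $\widetilde{Z_j\cap D}=\widetilde{Z_j}\cap\widetilde D$ and that these equal the expected blow-ups, which relies on the transversality of $Z_j$ and $D$ and on $Z_0$ being contained in each $Z_j$. Once this is established, $(ii)$ is a finite collection of applications of Lemma~\ref{lm:blow_div} with no new ideas. I therefore expect essentially no obstacle beyond making the local-coordinate argument in $(i)$ precise; the cohomological content is entirely absorbed into the already-proved Lemma~\ref{lm:blow_div} together with the Leray--Hirsch decomposition of Theorem~\ref{thm:cohom_pf} and Lemma~\ref{lm:emb}.
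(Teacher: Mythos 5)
Your proposal is correct and follows essentially the same route as the paper: part $(ii)$ is obtained, exactly as in the paper, by two applications of Lemma~\ref{lm:blow_div} with $(X,Z,D)$ replaced by $(X,Z_0,D)$ and by $(Z_j,Z_0,Z_j\cap D)$, using that $Z_0\cap(Z_j\cap D)=Z_0\cap D$. For part $(i)$ the paper simply cites Proposition~\ref{pr:blow-up}, while you spell out the local-coordinate verification; this is only a difference in the level of detail, not of method.
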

\begin{proof}
The claim $(i)$ follows from Proposition \ref{pr:blow-up} immediately. Now we prove $(ii)$. The claim about $\widetilde{D}\to \widetilde{X}$ follows by substituting $X,Z_0,D$ for $X,Z,D$ in Lemma \ref{lm:blow_div}. The claim about $\widetilde{Z}_j\cap \widetilde{D}\to \widetilde{Z}_j$ follows by substituting $Z_i,Z_0,Z_i\cap D$ for $X,Z,D$ in Lemma \ref{lm:blow_div}.
\end{proof}

See \S \ref{sec:def} for the definitions of $f_{i,j}$, $r_{i,j}$.

\begin{thm}\label{thm:cohann}
$(1)$ The embedding $f_{i,j}\colon BR_{i,j}\to BF_{i}\times \P^{j}$ induces epimorphism in cohomology. One has the ring isomorphism
\[
H^*(BR_{i,j})\simeq \frac{\Z[x_1,\dots,x_i,y]}{(x_q^2-x_q x_{q-1},\ y^{j+1}|\ q=1,\dots,i)}\bigg/\Ann (x_i+y),
\]
where $x_0:=0$.

\medskip
\noindent $(2)$ The embedding $r_{i,j}\colon R_{i,j}\to BF_{i}\times BF_{j}$ induces epimorphism in cohomology. One has the ring isomorphism
\[
H^*(R_{i,j})\simeq \frac{\Z[x_1,\dots,x_i,y_1,\dots,y_j]}{(x_q^2-x_q x_{q-1},\ y_r^2-y_r y_{r-1}|\ q=1,\dots,i;\ r=1,\dots, j)}\bigg/\Ann (x_i+y_j),
\]
where $x_0:=0,y_0:=0$.
\end{thm}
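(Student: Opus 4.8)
The plan is to prove both statements by reducing to two ingredients: the dualisation criterion of Proposition \ref{pr:kerdual}, which turns the ring isomorphism into the single assertion that $f_{i,j}^{*}$ (resp.\ $r_{i,j}^{*}$) is an epimorphism once we know the relevant annihilators; and the iterated blow-up descriptions of Propositions \ref{pr:brijdef} and \ref{pr:sijdef} together with the cohomology-of-blow-up machinery of Lemmas \ref{lm:blow_div} and \ref{lm:constrepi}. First I would record that the ambient cohomology rings are as claimed: $H^{*}(BF_{i}\times\P^{j})\simeq\Z[x_{1},\dots,x_{i},y]/(x_{q}^{2}-x_{q}x_{q-1},\ y^{j+1})$ by \eqref{eq:bfncohom} and the K\"unneth formula (with $x_{q}=c_{1}(\overline{\beta_{q}})$, $y=c_{1}(\overline{\eta'})$), and similarly $H^{*}(BF_{i}\times BF_{j})$; in particular these rings have no torsion and vanish in odd degrees. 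By Proposition \ref{pr:brijdef}$(i)$ (resp.\ \ref{pr:sijdef}$(i)$), $BR_{i,j}$ is the dualisation of $\overline{\beta_{i}}\otimes\overline{\eta'}$ (resp.\ $R_{i,j}$ of $\overline{\beta_{i}}\otimes\overline{\beta'_{j}}$), so the relevant first Chern class is $x_{i}+y$ (resp.\ $x_{i}+y_{j}$), and Proposition \ref{pr:kerdual} gives exactly the displayed isomorphism \emph{provided} the inclusion induces a surjection in cohomology.

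So the whole content is the surjectivity of $f_{i,j}^{*}$ and $r_{i,j}^{*}$, which I would prove by induction, using the single blow-up descriptions. For $BR_{i,j}$: by Theorem \ref{thm:1blowbr}$(ii)$, $\pi\colon BR_{i,j}\to BF_{i-1}\times\P^{j}$ is the blow-up of the smooth ambient variety along the smooth centre $g\circ f_{i-1,j-1}(BR_{i-1,j-1})$, and $BR_{i,j}$ itself is a divisor in the ambient $BF_{i}\times\P^{j}=\P(\beta_{i-1}\oplus\underline{\C})\to BF_{i-1}\times\P^{j}$, which is in turn the blow-up $\Bl_{BR_{i-1,j-1}}(BF_{i-1}\times\P^{j})$-type total space --- more precisely, $BF_{i}\times\P^{j}$ is obtained from $BF_{i-1}\times\P^{j}$ by blowing up and $BR_{i,j}$ is the strict transform of $\widehat H_{i,j}$, cf.\ Proposition \ref{pr:brijdef}$(iii)$. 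I would rather organise the induction through Lemma \ref{lm:constrepi}: start from the base case, where $BR_{0,j}=\P^{j-1}\hookrightarrow\P^{j}$ (or rather $BR_{i,j}$ for $i=0$ is a linear subspace, whose inclusion is obviously epi in cohomology) and $BR_{i,0}$ is a point, and propagate surjectivity through each of the $i-1$ blow-ups exhibiting $BR_{i,j}$ as $\Bl\cdots\widehat H_{i,j}$, using that at each stage the centre $\{z_{0}=\cdots=z_{k}=0\}$ meets $\widehat H_{i,j}$ transversally in a copy of $H_{i-k-1,j}$ (Proposition \ref{pr:brijdef}$(ii)$) and that the relevant restriction maps between these linear-section Milnor hypersurfaces and their ambient products are themselves surjective --- which one checks directly since $\widehat H_{i,j}\subset\P^{i}\times\P^{j}$ has a cohomology ring generated by the restrictions of $x,y$ (it is a projective bundle over $\P^{j}$ away from a high-codimension locus, or simply because a smooth hyperplane-type divisor of this bidegree has surjective restriction by the Lefschetz-type argument plus explicit low-degree check). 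Then the blow-up cohomology surjectivity passes up by Lemma \ref{lm:blow_div}/\ref{lm:constrepi}. The argument for $R_{i,j}$ is identical in structure: by Proposition \ref{pr:sijdef}$(iii)$, $R_{i,j}$ is obtained from $BR_{i,j}$ by $j-1$ blow-ups along strict transforms of $\{w_{0}=\cdots=w_{k}=0\}\cap BR_{i,j}\simeq BR_{i,j-k-1}$, transversally by Proposition \ref{pr:sijdef}$(ii)$; feeding the already-established surjectivity of $BR_{i,j'}\hookrightarrow BF_{i}\times\P^{j'}$ (all $j'$) and of the inclusions $BR_{i,j-k-1}\hookrightarrow BR_{i,j}$ into Lemma \ref{lm:constrepi} yields surjectivity of $R_{i,j}\hookrightarrow BF_{i}\times BF_{j}$.

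The key point making Lemma \ref{lm:constrepi} applicable is that \emph{all} the auxiliary restriction maps in sight --- $H_{i-k-1,j}\hookrightarrow\P^{i}\times\P^{j}$, $BR_{i,j-k-1}\hookrightarrow BF_{i}\times\P^{j}$, and the induced maps on the successive strict transforms --- are epimorphisms in cohomology, and this has to be verified hand in hand with the main induction since the hypotheses of Lemma \ref{lm:constrepi}$(ii)$ demand surjectivity of $Z_{j}\cap D\to Z_{j}$ at every stage. Concretely I would set up a simultaneous downward induction on $\min\{i,j\}$ (and on the number of remaining blow-ups), the base cases being the trivial ones $BR_{0,j}$, $BR_{i,0}$, $R_{0,j}=BF_{j-1}$, $R_{i,1}=BR_{i,1}$, where surjectivity is either obvious or already covered, and then invoke Lemma \ref{lm:constrepi} to climb one blow-up at a time. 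I expect the main obstacle to be precisely this bookkeeping: correctly matching, at each blow-up, which pair $(Z_{j},D)$ plays the role of the hypotheses in Lemma \ref{lm:constrepi}, and checking that the ``smaller'' hypersurfaces appearing as blow-up centres and as divisors are exactly the ones whose surjectivity has been established at an earlier stage of the induction --- the transversality claims from Propositions \ref{pr:brijdef} and \ref{pr:sijdef} are what keep this consistent, but the nested identifications (e.g.\ that $\widetilde{Z_{j}}\cap\widetilde D$ really is the corresponding smaller hypersurface) require care. Once surjectivity is in hand, the ring presentations follow formally from Proposition \ref{pr:kerdual} applied with $x=x_{i}+y$ and $x=x_{i}+y_{j}$ respectively, since the ambient rings and their relations have been written down explicitly and are torsion-free with vanishing odd part.
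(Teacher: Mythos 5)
Your proposal is correct and follows essentially the same route as the paper: the epimorphism claims are deduced from the iterated blow-up descriptions of Propositions \ref{pr:brijdef} and \ref{pr:sijdef} fed into Lemma \ref{lm:constrepi}, and the ring presentations then follow from Proposition \ref{pr:kerdual} together with the K\"unneth formula and \eqref{eq:bfncohom}. The extra bookkeeping you describe (identifying the pairs $(Z_k,D)$ at each stage and supplying the surjectivity of $H_{i-k-1,j}\hookrightarrow \P^{i-k-1}\times\P^{j}$ as the base input) is exactly what the paper leaves implicit.
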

\begin{proof}
Propositions \ref{pr:brijdef}, \ref{pr:sijdef} and Lemma \ref{lm:constrepi} imply that $f_{i,j}^*$, $r_{i,j}^*$ are epimorphic. The respective kernels are given in Proposition \ref{pr:kerdual}. It remains to compute the cohomology of the respective Cartesian products. This follows by K\"{u}nneth formula from the computation of the cohomology rings of $\P^n$, $BF_n$ (see \eqref{eq:bfncohom}).
\end{proof}

\begin{ex}\label{ex:cohblowr22}
By Theorem \ref{thm:1blow}, $R_{2,2}$ is the blow-up of $BF_{1}\times BF_{2}$ along $R_{1,1}$. The normal bundle of the composition $R_{1,1}\to BF_{1}\times BF_{1}\to BF_{1}\times BF_{2}$ of embeddings is the restriction $\nu$ of $\beta_1^{\vee}\otimes(\beta'_1)^{\vee}\oplus (\beta'_1)^{\vee}$ to $R_{1,1}$. The irreducible rational curve $R_{1,1}$ is obtained by taking subsequently the divisors corresponding to the algebraic line bundles $\beta_1^{\vee}\otimes(\beta'_1)^{\vee}$, $(\beta'_2)^{\vee}$ over $BF_{1}\times BF_{2}$. Hence, $\omega_{R_{1,1}}=(x_1+y_1)y_2$. Clearly, $H^*(R_{1,1}\mid \Z)\simeq \Z[t]/(t^2)$, where $\eta\to \P^1$ is the tautological line bundle and $t=c_1(\eta^{\vee})$. It is not hard to compute the Chern class $c(\nu)$ to be $1+3t$ of $\nu$. Hence, by Proposition \ref{pr:blow-up}, one has
\begin{multline}\label{eq:r22}
H^*(R_{2,2};\Z)\simeq\frac{\bigl(\Z[x_1,y_1,y_2]/(x_1^2,y_1^2, y_2^2-y_1 y_2)\bigr)\oplus(\Z[t]/(t^2))\la v,v^2\ra}{\bigl(v^2+3vt+(x_1+y_1)y_2, (y_2-y_1)v, (y_2-x_1)v, tv-x_1v\bigr)}\simeq\\
\Z[x_1,y_1,y_2,v]/\biggl(x_1^2,y_1^2, y_2^2-y_1 y_2,v^2+3vy_2+(x_1+y_1)y_2, (y_2-y_1)v, (y_2-x_1)v\biggr).
\end{multline}
Here we can vanish $t$ by expressing the additive generators $tv$ and $tv^2$ as $x_1v$ and $x_1v^2$, respectively.
\end{ex}

\begin{ex}
It is not hard to compute the ideal $\Ann (x_2+y_2)$ of the ring $H^{*}(BF_{2}\times BF_{2};\Z)$ to be
\[
\bigl((x_2-x_1)(y_2-y_1),x_2^2+x_2y_2+y_2^2\bigr).
\]
Hence, by Theorem \ref{thm:cohann}, one has
\begin{equation}\label{eq:s22ann}
H^*(R_{2,2};\Z)\simeq\Z[x_1,x_2,y_1,y_2]/\bigl(x_1^2,x_2^2-x_1 x_2,y_1^2,y_2^2-y_1 y_2,(x_2-x_1)(y_2-y_1),x_2^2+x_2y_2+y_2^2\bigr).
\end{equation}
The isomorphism
\[
\Z[x_1,x_2,y_1,y_2]\to\Z[x_1,y_1,y_2,v],\ (x_1,y_1,y_2,x_2)\mapsto x_1,x_1+v,y_1,y_2,
\]
of polynomial rings induces the isomorphism between the quotient rings, which are given on the right hand sides of \eqref{eq:s22ann} and \eqref{eq:r22}. A similar computation shows that
\[
H^*(BR_{3,2};\Z)\simeq\Z[x_1,x_2,x_3,y]/\bigl(x_1^2,x_2^2-x_1 x_2, x_3^2-x_2 x_3, y^3,x_2 y^2  - x_3 y^2 , x_1 x_3 y - x_3^2 y - x_1 y^2  + x_3 y^2 , x_3^3  - x_3^2 y + x_3 y^2\bigr).
\]
\end{ex}

\subsection{Betti numbers}

Consider the Hodge-Deligne polynomial $e(X)(u,v):=\sum_{i,j} h^{i,j}(X)u^i v^j$ of a quasiprojective complex algebraic variety $X$ (see \cite{da-kh-87}, \cite{gu-10}).

\begin{pr}[{\cite[p.929]{da-kh-87}}]\label{pr:hdmult}
$(i)$ For any quasiprojective complex algebraic varieties $Y\subseteq X$ one has
\[
e(X)(u,v)=e(Y)(u,v)+e(X\setminus Y)(u,v);
\]

\smallskip
\noindent $(ii)$ For any integer $n\geq 0$ one has $e(\P^n)(u,v)= 1+uv+\dots+(uv)^n$;

\smallskip
\noindent $(iii)$ For any algebraic $F$-bundle $E\to B$, where $B,F$ are nonsingular projective varieties, one has
\[
e(E)(u,v)=e(B)(u,v) e(F)(u,v).
\]

\smallskip
\noindent $(iv)$ For any closed immersion $Z\subset X$ of nonsingular projective algebraic varieties, the identity
\[
e(\Bl_{Z} X)(u,v)=e(X)(u,v) + (uv+\dots+(uv)^{k-1})e(Z)(u,v),
\]
holds, where $k$ is the complex codimension of \/ $Z\subset X$.
\end{pr}

For any complex projective manifold $X$, the $k$-th Betti number $b_{k}(X)$ of $X$ is equal to $\sum_{i+j=k} h^{i,j}(X)$ by the Hodge decomposition, where $k\geq 0$ is any integer. If $X$ has only diagonal Hodge numbers, i.e. $h^{i,j}(X)=0$ for any $i\neq j$, then we put $e(X)(t):=e(X)(u,v)$, where $t=uv$.

\begin{pr}\label{pr:hdpoly}
Let $i,j\geq 0$ be any integers. Then the following relations hold.

\begin{equation}\label{eq:hdp1}
e(BR_{i,j})(t)=(1+t)^{i}(1+t+\dots+t^{j-1}),\ \mbox{where }\ 0\leq i\leq j\ \mbox{and }\ 0<j;
\end{equation}
\begin{equation}\label{eq:hdp2}
e(BR_{i,j})(t)=(1+t)^{i}(1+t+\dots+t^{j-1})+t^j (1+t)^{i-j-1},\ \mbox{where }\ i> j> 0;
\end{equation}
\begin{equation}\label{eq:hdp3}
e(R_{i,j})(t)=(1+t)^{i+j-1}+t(1+t)^{i+j-3}+\dots+t^{i-1}(1+t)^{j-i+1}+t^{\min{\lb i,j\rb}} (1+t)^{i+j-2\min{\lb i,j\rb}-1},
\end{equation}
where $0<i,j$ and $i\neq j$;
\begin{equation}\label{eq:hdp4}
e(R_{i,i})(t)=(1+t)^{2i-1}+t(1+t)^{2i-3}+\dots+t^{i-1}(1+t),\ \mbox{where }\ 2\leq i.
\end{equation}
\end{pr}
\begin{proof}
By Proposition \ref{pr:toricbr} the variety $BR_{i,j}$ is the algebraic $\P^{j-1}$-bundle over $BF_{i}$ for any integers $i,j$ such that $0\leq i\leq j$ and $j>0$. The variety $BF_{i}$ is the tower of algebraic $\P^1$-bundles over the point. Hence, by Proposition \ref{pr:hdmult} one obtains the formula \eqref{eq:hdp1} from the Hodge-Deligne polynomial of the projective space.

We prove \eqref{eq:hdp2} by the induction on $j$. By Theorem \ref{thm:1blowbr} $(ii)$, the variety $BR_{i,1}$ is the blow-up of $BF_{i-1}\times\P^1$ along its subvariety $BF_{i-2}$. Hence, by Proposition \ref{pr:hdmult},
\[
e(BR_{i,1})(t)=(1+t)^{i-1}(1+t)+t(1+t)^{i-2}=(1+t)^{i}+t(1+t)^{i-2},
\]
which proves the induction basis $j=1$. Assume that \eqref{eq:hdp2} holds for $j=j_{0}-1\geq 1$. By Theorem \ref{thm:1blowbr} $(ii)$, the variety $BR_{i,j}$ is the blow-up of $BF_{i-1}\times \P^j$ along its subvariety $BR_{i-1,j-1}$. We conduct the computation for $j=j_{0}$ by using the induction hypothesis and Proposition \ref{pr:hdmult} as follows.
\begin{multline*}
e(BR_{i,j})(t)=(1+t)^{i-1}(1+t+\dots+t^{j})+t\biggl((1+t)^{i-1}(1+t+\dots+t^{j-2})+t^{j-1} (1+t)^{i-j-1}\biggr)=\\
(1+t)^{i}(1+t+\dots+t^{j-1})+t^{j} (1+t)^{i-j-1}.
\end{multline*}
This proves the identity \eqref{eq:hdp2}.

It is enough to prove \eqref{eq:hdp3} only for any integers $i,j\geq 0$ such that $i<j$, because $R_{i,j}\simeq R_{j,i}$. We prove \eqref{eq:hdp3} by the induction on $j$. For $j=1$, \eqref{eq:hdp3} follows from \eqref{eq:hdp2}, since $R_{1,j}\simeq R_{j,1}=BR_{j,1}$. Assume that \eqref{eq:hdp3} holds for $j=j_{0}-1$. Let $j=j_0$. By Theorem \ref{thm:1blow} $(ii)$, the variety $R_{i,j}$ is the blow-up of $BF_{i}\times BF_j$ along its subvariety $R_{i-1,j-1}$. We conduct the computation for $j=j_{0}$ by using the induction hypothesis and Proposition \ref{pr:hdmult} as follows.
\[
e(R_{i,j})=(1+t)^{i+j-1}+t\biggl((1+t)^{i+j-3}+t(1+t)^{i+j-5}+\dots+t^{i-1}(1+t)^{j-i-1}\biggr).
\]
This proves the identity \eqref{eq:hdp3}.

Finally, prove \eqref{eq:hdp4} by the induction on $j$. Note that $R_{2,2}$ is the blow-up of $BF_{1}\times BF_{2}$ along its subvariety $\P^1$. By Proposition \ref{pr:hdmult}, then one has the identity
\[
e(R_{2,2})(t)=(1+t)^3+t(1+t),
\]
which proves the induction basis $j=2$. Assume that \eqref{eq:hdp4} holds for $i=i_0-1$. By Theorem \ref{thm:1blow} $(ii)$, the variety $R_{i,i}$ is the blow-up of $BF_{i-1}\times BF_{i}$ along its subvariety $R_{i-1,i-1}$.We conduct the computation for $i=i_0$ by using the induction hypothesis and Proposition \ref{pr:hdmult} as follows.
\[
e(R_{i,i})=(1+t)^{2i-1}+t\biggl((1+t)^{2i-3}+t(1+t)^{2i-5}+\dots+t^{i-1}(1+t)\biggr).
\]
The proof is complete.
\end{proof}

\begin{cor}
Let $i,j,k\geq 0$ be any integers. Then one has the following formulas.
\[
b^{2k}(BR_{i,j})=\binom{i}{k}+\binom{i}{k-1}+\dots+\binom{i}{k-j+1}+\binom{i-j-1}{k-j},\mbox{ where } i>j>0;
\]
\[
b^{2k}(R_{i,j})=\binom{i+j-1}{k}+\binom{i+j-3}{k-1}+\dots+\binom{i+j-2\min{\lb i,j\rb}-1}{k-\min{\lb i,j\rb}},\mbox{ where } 0<i,j \mbox{ and } i\neq j;
\]
\[
b^{2k}(R_{i,i})=\binom{2i-1}{k}+\binom{2i-3}{k-1}+\dots+\binom{1}{k-i+1},\mbox{ where } 1<i.
\]
\end{cor}

\begin{rem}
The identities from Proposition \ref{pr:hdpoly} agree with the various algebraic fiber bundle structures on $BR_{i,j}$ and $R_{i,j}$ from Section \ref{sec:blowup} and the property of Hodge-Deligne polynomial from Proposition \ref{pr:hdmult} $(iii)$.
\end{rem}

\begin{bibdiv}
\begin{biblist}[\resetbiblist{99}]
\bibselect{biblio_ArMJ}
\end{biblist}
\end{bibdiv}

\end{document}